\numberwithin{equation}{section}
\newtheorem{thm}{Theorem}[section]
\newtheorem{defn}[thm]{Definition}
\newtheorem{lem}[thm]{Lemma}
\newtheorem{cor}[thm]{Corollary}
\newtheorem{rem}[thm]{Remark}
\DeclarePairedDelimiterX{\norm}[1]{\lVert}{\rVert}{#1}
\DeclarePairedDelimiterX{\bnorm}[1]{\big\lVert}{\big\rVert}{#1}
\DeclarePairedDelimiterX{\Bnorm}[1]{\Big\lVert}{\Big\rVert}{#1}
\newcommand\at[2]{\left.#1\right|_{#2}}
\newcommand{\hil}{\mathcal{H}}
\newcommand{\hils}{\mathcal{B}_2(\hil)}
\newcommand{\boh}{\mathcal{B}_1(\hil)}
\newcommand{\dds}{\dfrac{d}{ds}}
\newcommand{\ddt}{\dfrac{d}{dt}}
\newcommand{\cir}{\mathbb{T}}
\begin{document}



\pagespan{1}{}

\keywords{Spectral shift function, Trace formula, Perturbations, Trace class, Hilbert Schmidt class}
\subjclass[msc2010]{47A55, 47A56, 47A13, 47B10}

\title[\bf Second order trace formulae]{\bf Second order trace formulas}

\author[\bf Chattopadhyay] {\bf Arup Chattopadhyay}
\address{Department of Mathematics, Indian Institute of Technology Guwahati, Guwahati, 781039, India}
\email{arupchatt@iitg.ac.in, 2003arupchattopadhyay@gmail.com}

\author[Das]{Soma Das}
\address{Theoretical Statistics and Mathematics Unit, Indian Statistical Institute, Bangalore Centre, Bengaluru, 560059, India}	\email{dsoma994@gmail.com, soma18@iitg.ac.in, somadas\_ra@isibang.ac.in}

\author[Pradhan]{Chandan Pradhan}
\address{Department of Mathematics, Indian Institute of Science, Bangalore, 560012, India}
\email{chandanp@iisc.ac.in, chandan.pradhan2108@gmail.com}

\begin{abstract}
	Koplienko \cite{Ko} found a trace formula for perturbations of self-adjoint operators by operators of Hilbert-Schmidt class $\mathcal{B}_2(\mathcal{H})$. Later, Neidhardt introduced a similar formula in the case of pair of unitaries $(U,U_0)$ via multiplicative path in \cite{NH}. In 2012, Potapov and Sukochev \cite{PoSu} obtained a trace formula like the Koplienko trace formula for pairs of contractions by answering an open question posed by Gesztesy, Pushnitski, and Simon in \cite[Open Question 11.2]{GePu}. In this article, we supply a new proof of the Koplienko trace formula in the case of pairs of contractions $(T,T_0)$, where the initial operator $T_0$ is normal, via linear path by reducing the problem to a finite-dimensional one as in the proof of Krein's trace formula by Voiculescu \cite{Voi}, Sinha and Mohapatra \cite{MoSi94,MoSi96}. Consequently, we obtain the Koplienko trace formula for a class of pairs of contractions using the Sch\"{a}ffer matrix unitary dilation. Moreover, we also obtain the Koplienko trace formula for a pair of self-adjoint operators and maximal dissipative operators using the Cayley transform. At the end, we extend the Koplienko-Neidhardt trace formula for a class of pairs of contractions $(T,T_0)$ via multiplicative path using finite-dimensional approximation method.
\end{abstract}

\maketitle

\section{\bf Introduction}
In noncommutative geometry \cite{Connes,CC97}, the spectral action has been described in terms of trace of $f(H_0)$, where $f$ is a nice scalar function and $H_0$ is an unbounded self-adjoint operator having compact resolvent in a separable Hilbert space $\hil$. The main reason behind the curiosity of the trace function $V\mapsto \text{Tr}~(f(H_0+V))$ lies in the perturbation of $H_0$ by $V$, where $V$ is a bounded self-adjoint operator on $\hil$. For example, one can consider the inner fluctuations of a spectral triple, they play a significant role in the applications of  noncommutative geometry to high energy physics \cite{CC06,CC96,CC97,CCM07}. The spectral shift function for a trace class perturbation of a self-adjoint (unitary) operator and the associated trace formula plays an important role in perturbation theory. The notion of first order spectral shift function originated from Lifshits' work on theoretical physics \cite{Lif} and later the mathematical theory of this object was elaborated upon by Krein in a series of papers, starting with \cite{Kr53}. In \cite{Kr53} (see also \cite{Kr83}), Krein proved that for a pair
of self-adjoint (not necessarily bounded) operators $H$ and $H_0$ satisfying $H-H_0\in \mathcal{B}_1(\mathcal{H})$ (set of trace class operators on a separable Hilbert space $\mathcal{H}$) there exists a unique real valued $L^1(\mathbb{R})$ function $\xi$  such that 
\begin{equation}\label{inteq1}
	\text{Tr}~\{\phi(H)-\phi(H_0)\} = \int_{\mathbb{R}} \phi'(\lambda)~\xi(\lambda)~d\lambda,
\end{equation}
whenever $\phi$ is a function on $\mathbb{R}$ with the Fourier transform of $\phi'$ in $L^1(\mathbb{R})$. The function $\xi$ is known as Krein's spectral shift function and the relation \eqref{inteq1} is called Krein's trace formula. A similar result was obtained by Krein in \cite{Kr62} for pair of unitary operators $\big\{U,U_0\big\}$ such that $U-U_0$ is trace class. For each such pair there exists a real valued $L^1([0,2\pi])$- function $\xi$, unique modulo an additive constant, (called a spectral shift function for $\{U,U_0\}$) such that
\begin{equation}\label{intequ2}
	\text{Tr}~\big\{\phi(U)-\phi(U_0)\big\} = \int_{0}^{2\pi} \frac{d}{dt}\big\{\phi(\textup{e}^{it})\big\}~\xi(t)~dt,
\end{equation}
whenever $\phi'$ has absolutely convergent Fourier series. 
The original proof of Krein uses analytic function theory. Later in \cite{BS2} (see also \cite{BS1}), Birman and Solomyak approached the trace formula \eqref{inteq1} using the theory of double operator integrals. In 1985, Voiculescu \cite{Voi} gave an alternative proof of the trace formula \eqref{inteq1} by adapting the proof of the classical Weyl-von Neumann theorem for the case of bounded self-adjoint operators, and later Sinha and Mohapatra extended Voiculescu's method to the unbounded self-adjoint \cite{MoSi94} and unitary cases \cite{MoSi96}. In this connection it is worth mentioning that Potapov, Sukochev and Zanin \cite{PoSuZa14JST} gave an alternative proof of Lifshits'-Krein trace formula \eqref{inteq1} in {semi-finite von Neumann algebra setting}.

 We wanted to note that in \cite{MN2015}, Malamud, Neidhardt, and in \cite{MNP2017}, Malamud, Neidhardt, and Peller, also presented two other proofs of Krein's formula \eqref{inteq1} for a pair of self-adjoint operators. Moreover, in \cite{MN2015}, a simple formula for the spectral shift function for a pair of two self-adjoint extensions is expressed via the Weyl function and boundary operators. In \cite{Pe16}, Peller, and in \cite{AP16}, Aleksandrov and Peller provided precise descriptions of the maximal class of functions for which \eqref{inteq1} and \eqref{intequ2} held. This solved a longstanding problem by Krein.

The first attempt to generalize Krein's trace formulas to pairs of non-selfadjoint and non-unitary operators goes back to Langer \cite{Lan1965}. Note, however, that applied to a pair of contractions $(T,T_0)$, Langer's condition requires the strict contractivity of both operators. In particular, this result cannot be applied to a pair of unitary (self-adjoint) operators.

Later on, Rybkin \cite{Ry1,Ry2,Ry3,Ry4} considered a pair $(H, H_0)$ $((T, T_0))$ consisting of an m-dissipative operator $H$ and self-adjoint $H_0 = H_0^*$ (a unitary operator $T_0$ and contraction $T$) and proved trace formula similar to Krein's one \eqref{inteq1}. However, he proved the existence of a complex-valued $A$-integrable spectral shift function in the sense of Kolmogorov. Regarding the definition of $A$-integrable function and its properties, we refer to \cite{Al81, MNP19}.

Studying the accumulative case $(H,H_0)$ where $H := H_0-iG,~ H_0 = H_0^*$ and $G (\geq 0)\in \boh$, Krein used the class $K(\mathbb{R}_+)$ of functions holomorphic in $\mathbb{C}_-$ (lower-half plane) being Laplace transform of a bounded measure on $\mathbb{R}_+$. He proved in \cite[Theorem 9.2]{K1987} that $\phi(H)-\phi(H_0)\in \boh$ for any $\phi\in K(\mathbb{R}_+)$ and instead of \eqref{inteq1}, the following formula holds
\begin{align}\label{nn1}
	\operatorname{Tr}\left(\phi(H)-\phi(H_0) \right)=-i\int_{\mathbb{R}}\phi'(\lambda)d\omega_K(\lambda),
\end{align}
where $d\omega_K(\lambda)$ is a non-negative measure, and its absolute continuity was not addressed in \cite{K1987}.

The next step was made by Adamjan and Neidhardt in \cite{AdNei}. Namely, for pairs $(H, H_0): = (H_0-iG,H_0), ~G \geq 0$, with $G\log G \in\boh$, it is proved in \cite{AdNei} that under these assumptions there exists a real-valued spectral shift function $\xi$ such that instead of \eqref{nn1} the following formula holds
\begin{align}\label{nn2}
	\operatorname{Tr}\left(\phi(H)-\phi(H_0) \right)=\int_{\mathbb{R}}\phi'(\lambda)\xi(\lambda)\,d\lambda
\end{align}
for functions $\phi$ from a certain class of holomorphic  functions in $\mathbb{C}$, which is smaller than $K(\mathbb{R}_+)$. Note that the condition $G\log G\in\boh$ is stronger than $G\in\boh$.

Finally, the existence of a complex-valued integrable spectral shift function $\xi$ for a pair of contractions $(T, T_0)$ with trace class difference $T-T_0\in\boh$ was proved in \cite{MN2015} (under an additional assumption $\rho(T_0)\cap \mathbb{D}\neq\emptyset$) by Malamud and Neidhardt, and in \cite{MNP2017} and \cite{MNP19} (in full generality)  by Malamud, Neidhardt, and Peller. This settles the longstanding problem of obtaining a trace formula for arbitrary pairs of contractions with trace class differences as well as proving the existence of an integrable spectral shift function for such contractions. Moreover, the formula \eqref{nn2} for pair of contractions was proved in these paper for holomorphic operator Lipshitz functions in $\mathbb{D}$.

The modified second-order spectral shift function for Hilbert-Schmidt perturbations was introduced by Koplienko in \cite{Ko}. Let $H$ and $H_0$ be two self-adjoint operators in a separable Hilbert space $\mathcal{H}$ such that $H-H_0=V\in \mathcal{B}_2(\mathcal{H})$ (set of Hilbert-Schmidt operators on $\mathcal{H}$).  Sometimes $H_0$ is known as the initial operator, $V$ is known as the perturbation operator, and $H=H_0+V$ is known as the final operator.  In this case, the difference $\phi(H)-\phi(H_0)$ is no longer of trace-class, and one has to consider instead
\[
\phi(H)-\phi(H_0)-\at{\dfrac{d}{ds}\Big(\phi(H_0+sV)\Big)}{s=0},
\]
where $\at{\dfrac{d}{ds}\Big(\phi(H_0+sV)\Big)}{s=0}$ denotes the G\^{a}teaux derivative of $\phi$ at $H_0$ in the direction $V$ (see \cite{RB}) and find a trace formula for the above expression under certain assumptions on $\phi$. Under the above hypothesis, Koplienko's formula asserts that there exists a unique non-negative function $\eta \in L^1(\mathbb{R})$ such that
\begin{equation}\label{intequ3}
	\operatorname{Tr}\Big\{\phi(H)-\phi(H_0)-\at{\dfrac{d}{ds}\Big(\phi(H_0+sV)\Big)}{s=0}\Big\}=\int_{\mathbb{R}} \phi''(\lambda)~\eta(\lambda)~d\lambda
\end{equation}
for rational functions $\phi$ with poles off $\mathbb{R}$. The function $\eta$ is known as Koplienko spectral shift function corresponding to the pair $(H_0,H)$.  In 2007, Gesztesy et al. \cite{GePu} gave an alternative proof of the formula \eqref{intequ3} for the bounded case and Boyadzhiev \cite{BO} in 1993 and then Dykema and Skripka \cite{DS,SK10} in 2009, obtained the formula \eqref{intequ3} in the semi-finite von Neumann algebra setting. Later in 2012, Sinha and the first author \cite{ChSi} of this article  provide an alternative proof of the formula \eqref{intequ3} using the idea of finite dimensional approximation method as in the works of Voiculescu \cite{Voi}, Sinha and Mohapatra \cite{MoSi94,MoSi96}. Furthermore, in the context of Krein and Koplienko trace formulas on normed ideals in semifinite von Neumann algebra setting, the authors of \cite{DySk14CMP, CGP2023} used the concept of unitary dilation (only use the existence of unitary dilation but not the structure of the Sch\"{a}ffer matrix representation of unitary dilation) to achieve some estimates for traces. In this connection, it is worth mentioning that the existence of higher order spectral shift function for pair of self-adjoint operators $(H,H_0)$ with Schatten-n-class difference was obtained by Potapov, Skripka, and Sukochev in \cite{PoSkSu13In} using the concept of multiple operator integrals (MOI), which in particular, affirmatively answers the Koplienko's conjecture.

A similar problem for unitary operators was considered by Neidhardt \cite{NH}. Let $U$ and $U_0$ be two unitary operators on a separable Hilbert space $\hil$ such that $U-U_0~\in~\mathcal{B}_2(\hil)$. Then  $U=e^{iA}U_0$, where $A$ is a self-adjoint operator in $\mathcal{B}_2(\hil)$. Note that we interpret $U_0$ as the initial operator, $A$ as the perturbation operator, and $U=e^{iA}U_0$ as the final operator. Set $U_s=e^{isA}U_0,~s\in \mathbb{R}$. Then it was shown in \cite{NH} that there exists an $L^1([0,2\pi]))$-function $\eta$ (unique up to an additive constant ) such that 
\begin{equation}\label{intequ4}
	\operatorname{Tr}\Big\{\phi(U)-\phi(U_0)-\at{\dfrac{d}{ds}\big\{\phi(U_s)\big\}}{s=0}\Big\}=\int_{0}^{2\pi} \dfrac{d^2}{dt^2} \big\{\phi (e^{it})\big\} \eta(t) dt,
\end{equation}
whenever $\phi''$ has absolutely convergent Fourier series. The function $\eta$ is known as Koplienko spectral shift function corresponding to the pair $(U_0,U)$. In addition, the authors of this article also provide an alternative proof of the formula \eqref{intequ4} using the idea of the finite dimensional approximation method in \cite{CDP}. It is worth mentioning that, in \cite{Pe05}, Peller extended formulae \eqref{intequ3} and \eqref{intequ4} to the case when $\phi$ belongs to the Besov class $B_{\infty,1}^2$. It is important to note that the path considered by Neidhardt \cite{NH} is an unitary path, that is  $U_s=e^{isA}U_0$ is an unitary operator for each $s\in\mathbb{R}$. In \cite[Sect.10]{GePu}, Gesztesy, Pushnitski and Simon have discussed an alternative to Neidhardt's approach. In other words they have considered the linear path $U_0+ t(U-U_0);~0\leq t\leq 1$ instead of the unitary path $U_s=e^{isA}U_0;~0\leq s\leq 1$ and proved that there exists a real distribution $\eta$ on the unit circle $\mathbb{T}$ so that the formula
\begin{equation}\label{linshif}
	\operatorname{Tr}\Big\{\phi(U)-\phi(U_0)-\at{\dfrac{d}{ds}\Big\{\phi\big(U_0+s(U-U_0)\big)\Big\}}{s=0}\Big\}=\int_{0}^{2\pi} \dfrac{d^2}{dt^2} \big\{\phi (e^{it})\big\}~ \eta(e^{it},U_0,U)~ \frac{dt}{2\pi}
\end{equation}
holds, for every complex polynomial
$\phi(z)=\sum\limits_{k=0}^na_kz^k;~n\geq 0;~a_k,z\in \mathbb{C}.$
In this connection, Gesztesy et al. posed an open question in  \cite[Open Question 11.2]{GePu} which says the following:
\begin{equation}\label{question}
	\text{\emph{Is the above distribution}}\quad  \eta \quad \text{\emph{in}} \quad \eqref{linshif} \quad \text{\emph{an}}\quad  L^1(\mathbb{T})-\text{\emph{function}} ?
\end{equation}
In 2012, Potapov and Sukochev provided an affirmative answer to the question mentioned above in \cite{PoSu}. In fact, they prove the following interesting theorem in \cite{PoSu}.

\begin{thm}(see \cite[Theorem 1]{PoSu})\label{PSthm}
	Let $U$ and $U_0$ be two contractions in an infinite dimensional separable Hilbert space $\hil$ such that $V:=U-U_0\in\hils$. Denote $U_s=U_0+sV, ~s\in[0,1]$. Then for any complex polynomial  $p(\cdot)$,
	\begin{align*}
		\Bigg\{p(U)-p(U_0)-\at{\dds}{s=0}\big\{p(U_s)\big\}\Bigg\}\in\boh
	\end{align*} 
	and there exists an $L^1(\mathbb{T})$-function $\eta$  (unique up to an analytic term) such that
	\begin{align}\label{eqin20}
		\operatorname{Tr}\Bigg\{p(U)-p(U_0)-\at{\dds}{s=0} \big\{p(U_s)\big\} \Bigg\}=\int_{\cir}
		p''(z)\eta(z)dz.
	\end{align}
	Moreover, for every given $\epsilon >0$, we can choose the function $\eta$ satisfying \eqref{eqin20} in such a way so that 
	\begin{equation}\label{eqin40}
		\|\eta\|_{L^1(\mathbb{T})}\leq \left(1+\epsilon\right) ~\|V\|_2^2. 
	\end{equation}
\end{thm}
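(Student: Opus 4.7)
The plan is to split the argument into two parts: first, verify the trace-class assertion; second, construct the $L^1(\cir)$ spectral shift function $\eta$ with the sharp norm bound.

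For the trace-class part, by linearity it suffices to treat $p(z)=z^n$. Setting $V=U-U_0$ and combining the noncommutative binomial expansion of $U^n=(U_0+V)^n$ with the Leibniz identity $\at{\dds U_s^n}{s=0}=\sum_{k=0}^{n-1}U_0^k V U_0^{n-1-k}$, a direct computation collapses to
\begin{equation*}
p(U)-p(U_0)-\at{\dds p(U_s)}{s=0}=\sum_{\substack{j+a+b=n-2\\ j,a,b\ge 0}} U^j V U_0^a V U_0^b.
\end{equation*}
Each summand factors as $(U^j V)\cdot (U_0^a V U_0^b)$, a product of two Hilbert--Schmidt operators sandwiched by contractions, hence it lies in $\boh$ with trace norm at most $\|V\|_2^2$. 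Summing yields total trace norm bounded by $\binom{n}{2}\|V\|_2^2$, which already exhibits the expected quadratic dependence on $\|V\|_2$.

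For the trace formula, my strategy is to lift $(U,U_0)$ to a pair of unitaries via the Sch\"affer matrix dilation and then appeal to a Koplienko trace formula along the linear path between unitaries. Let $\mathcal{U},\mathcal{U}_0$ on $\hil\oplus\hil$ denote the Sch\"affer unitary dilations of $U,U_0$. A direct computation with the defect operators $D_U=(I-U^*U)^{1/2}$ and $D_{U^*}=(I-UU^*)^{1/2}$ shows that $\mathcal{U}-\mathcal{U}_0\in\mathcal{B}_2(\hil\oplus\hil)$, with Hilbert--Schmidt norm controlled by $\|V\|_2$. Since $\mathcal{U}_0$ is unitary (in particular normal), the Koplienko trace formula for pairs of contractions with \emph{normal} initial operator --- which the present paper establishes via the finite-dimensional approximation scheme of Voiculescu \cite{Voi} and Sinha--Mohapatra \cite{MoSi94,MoSi96} --- produces an $L^1(\cir)$ shift function $\tilde\eta$ for the pair $(\mathcal{U}_0,\mathcal{U})$. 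The desired $\eta$ is then extracted by compressing back to $\hil$ using the power-dilation identity $U^k=P_\hil\mathcal{U}^k|_\hil$ valid for $k\ge 0$.

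The main obstacle is that the Sch\"affer dilation is \emph{nonlinear} in the underlying contraction, since $D_U$ depends nonlinearly on $U$. Consequently, the linear path $\mathcal{U}_s:=\mathcal{U}_0+s(\mathcal{U}-\mathcal{U}_0)$ in the dilation space is \emph{not} the Sch\"affer dilation of $U_s:=U_0+sV$; although its $(1,1)$-block is $U_s$, the off-diagonal blocks leak into the higher powers $\mathcal{U}_s^n$. Hence $\at{\dds p(U_s)}{s=0}$ is not simply the $(1,1)$-compression of $\at{\dds p(\mathcal{U}_s)}{s=0}$, and the discrepancy must be quantified and absorbed. I would address this via a finite-rank approximation of $V$ --- where the compression can be computed explicitly and the error between the two paths is directly tracked --- followed by passage to the limit using the $L^1$-continuity of the spectral shift function in the Hilbert--Schmidt norm of the perturbation. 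The sharp bound $\|\eta\|_{L^1}\le(1+\epsilon)\|V\|_2^2$ would then emerge from careful control of the Hilbert--Schmidt mass of the lifted perturbation, with the $\epsilon$ absorbing the surplus contributed by the nonlinear lifting.
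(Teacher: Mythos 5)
Your algebraic reduction for the trace-class half is correct: for $p(z)=z^n$ one indeed has $p(U)-p(U_0)-\at{\dds}{s=0}\big\{p(U_s)\big\}=\sum_{j+a+b=n-2}U^{j}VU_0^{a}VU_0^{b}$, each summand is a product of two Hilbert--Schmidt factors sandwiched by contractions, and the trace-norm bound $\binom{n}{2}\|V\|_2^2$ follows; this is in substance the computation behind Lemma~\ref{th6}.

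The dilation half, however, has a genuine gap. The key claim that the unitary dilations of $U$ and $U_0$ satisfy $\mathcal{U}-\mathcal{U}_0\in\mathcal{B}_2$ with norm controlled by $\|V\|_2$ is false in general: the dilation blocks contain the defect operators, and $T\mapsto D_T=(I-T^*T)^{1/2}$ is not Hilbert--Schmidt Lipschitz. Concretely, let $U_0$ be unitary, fix an orthonormal basis, and set $U=U_0A$ with $A=\mathrm{diag}\big(1-\tfrac1n\big)$; then $U-U_0\in\hils$, while $D_{U_0}=0$ and $D_U=\mathrm{diag}\big((1-(1-\tfrac1n)^2)^{1/2}\big)\notin\hils$, so the Sch\"affer dilations of $U$ and $U_0$ do \emph{not} differ by a Hilbert--Schmidt operator, let alone with norm $\lesssim\|V\|_2$. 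This obstruction is precisely why the paper does not obtain Theorem~\ref{PSthm} in full generality by dilation: the general statement is quoted from Potapov--Sukochev, and the dilation argument is run only under extra hypotheses --- in Theorem~\ref{th8} one assumes $\dim\ker(T_0)=\dim\ker(T_0^*)$ and $D_{T_0}\in\hils$ and replaces the defect blocks via the polar decomposition, while in Theorem~\ref{th7} the Hilbert--Schmidt difference of the dilations is simply assumed. Two further points. First, the dilation you need is the Sch\"affer dilation on $l^2_{\mathbb{Z}}(\hil)$, not the Halmos $2\times 2$ dilation on $\hil\oplus\hil$: the latter is not a power dilation, so $U^k=P_{\hil}\mathcal{U}^k|_{\hil}$ fails for $k\ge 2$. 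Second, the path mismatch you flag is resolved in the paper exactly rather than by finite-rank approximation of $V$: the dilations are block upper triangular with $T$ and $T_0$ as the only nonzero diagonal entries, so the trace of the dilated difference quotient equals the trace of the original one for every $t$ (see \eqref{17}), and Lemma~\ref{th6} converts this into equality for the G\^ateaux-derivative expressions; your appeal to ``$L^1$-continuity of the shift function in the Hilbert--Schmidt norm of the perturbation'' is itself an unproved (and delicate) statement. Finally, even where the lift exists, the bound inherited from Theorem~\ref{th4} is $\left(\tfrac12+\epsilon\right)\|\mathcal{U}-\mathcal{U}_0\|_2^2$, and the excess of $\|\mathcal{U}-\mathcal{U}_0\|_2$ over $\|V\|_2$ is not small (it can be infinite, as above), so it cannot be absorbed into $\epsilon$ to yield \eqref{eqin40}.
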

\noindent Note that, for a description of a wider class of functions for which formulae \eqref{intequ3} and \eqref{intequ4} hold we
refer to \cite{Pe05} and for a higher order version of \eqref{intequ4} and \eqref{eqin20} to \cite{AP2011, PoSkSu, PoSkSu16, Sk, ST}. 

The following are the main contributions in this article.

\begin{itemize}
	\item First of all, we supply a new proof of the above Theorem~\ref{PSthm} whenever $U_0$ is a normal contraction and $U$ is a contraction such that $U-U_0\in \mathcal{B}_2(\mathcal{H})$ (see Theorem~\ref{th4} and \ref{th5}), we believe for the first time, using the idea of finite-dimensional approximation method as in the works of Voiculescu, Sinha and Mohapatra, referred to earlier which in particular provides an affirmative answer to the above question \eqref{question}.
	\vspace{0.1in}
	
	\item Consequently, using the Sch\"{a}ffer matrix unitary dilation we also prove Theorem~\ref{PSthm} corresponding to a class of pairs of contractions $(T,T_0)$ such that $T-T_0\in \mathcal{B}_2(\mathcal{H})$ (see Theorem~\ref{th8} and Theorem~\ref{th7}).
	\vspace{0.1in}
	
	\item Next, by using our main theorem and using the Cayley transform of self-adjoint operators, we obtain the Koplienko trace formula corresponding to a pair of self-adjoint operators $(H, H_0)$ with the same domain in $\mathcal{H}$ and under the assumption that $H-H_0\in \mathcal{B}_2(\mathcal{H})$ (see Theorem~\ref{selfth}).
	\vspace{0.1in}
	
	\item  Moreover, by using Theorem~\ref{PSthm}, we prove the Koplienko trace formula for a pair of maximal dissipative operators $(L,L_0)$ under the assumption $(L+i)^{-1}-(L_0+i)^{-1}\in \mathcal{B}_2(\mathcal{H})$ (see Theorem~\ref{dissipthm}) for the first time.
	\vspace{0.1in}
	
	\item At the end, using the idea of  finite-dimensional approximation method, we have extended the Koplienko-Neidhardt trace formula for a class of pairs of contractions $(T,T_0)$ via multiplicative path (see Theorem~\ref{main}) for the first time.
\end{itemize}	

The significant differences between our method and the method applied in \cite{GePu, PoSu} are the following. 

\begin{itemize}
	\item Our approach in this article is different from that of \cite{GePu, PoSu} and is probably closer to Koplienko and Neidhardt's original approach (see \cite{Ko, NH}). In \cite{PoSu}, the authors proved Krein type formula (see \cite[Theorem 6]{PoSu}) to obtain Theorem~\ref{PSthm} by approximating the perturbation operator (and not the initial operator) via trace class operators but still, they were in an infinite-dimensional setting to deal with the problem which makes a major contrast in comparison to our context. In other words, in our setting, we reduce the problem into a finite-dimensional one by truncating both the initial operator and the perturbation operator simultaneously via finite-dimensional projections $\{P_n\}$ (see Theorem~\ref{th3(1)} and \ref{th3(2)}).
	\vspace{0.1in}
	
	\item Moreover, in our setting, we calculate the shift function explicitly by performing integration by-parts and using semi-spectral measures for contractions (see Theorem~\ref{th2(1)} and \ref{th2(2)}), and it is one of the significant steps in our context to get the shift function in an infinite-dimensional case which is not the principle essence in the approach mentioned in \cite{GePu, PoSu}.
	\vspace{0.1in}
	
	\item Furthermore, using our approach we obtain a slightly better upper bound of the $L^1(\mathbb{T})$-norm of $\eta$ (see Theorem~\ref{th4} and Theorem~\ref{th5}) compared to \eqref{eqin40} in Theorem~\ref{PSthm}. 
	
\end{itemize}
\vspace{0.1in}

The rest of the paper is organized as follows: Section 2 deals with some well-known concepts/results that are essential in the later sections. In Section 3, we give the proof of the Koplienko trace formula and Koplienko-Neidhardt trace formula for pairs of contractions when $dim~\mathcal{H}<\infty$. Section 4 is devoted to reducing the problem into finite dimensions, and in Section 5, we prove the required trace formula by appropriate limiting argument. Consequently, in Section 6, we prove the trace formula for a class of pairs of contractions. Section 7 and Section 8 deal with the trace formula
for a pair of self-adjoint operators and maximal dissipative operators respectively. At the end, in Section 9, we prove the Koplienko-Neidhardt trace for formula for a class of pairs of contractions via multiplicative path.

\section{\bf Preliminaries}
Recall that a \emph{semi-spectral measure} $\mathcal{E}$ on a measurable space $(\mathcal{X},\mathcal{B})$ is a map on the $\sigma$-algebra $\mathcal{B}$ with values in the set of bounded linear operators on a Hilbert space $\mathcal{H}$ that is countably additive in the strong operator topology and such that
\begin{equation*}
	\mathcal{E}(\Delta) \geq 0\text{ for all } \Delta\in \mathcal{B},~ \mathcal{E}(\emptyset)=0,\text{ and } \mathcal{E}(\mathcal{X})=I.
\end{equation*}
It is interesting to observe that by Naimark's theorem \cite{Nai} each semi-spectral measure $\mathcal{E}$ has a \emph{spectral dilation}, that is a
spectral measure $E$ on the same measurable space  $(\mathcal{X},\mathcal{B})$ that takes values in the set
of orthogonal projections on a Hilbert space $\mathcal{K}$ containing $\mathcal{H}$, and such that
\begin{equation*}
	\mathcal{E}(\Delta)=P_{\mathcal{H}}E(\Delta)|_{\mathcal{H}}, \quad \Delta\in \mathcal{B},
\end{equation*}
where $P_{\mathcal{H}}$ is the orthogonal projection on $\mathcal{K}$ onto $\mathcal{H}$. Integrals with respect to semi-spectral measures are defined in the following way:
\begin{equation}\label{semieq}
	\int_{\mathcal{X}}\phi(x)~\mathcal{E}(dx)=P_{\mathcal{H}}\left(\int_{\mathcal{X}}\phi(x)~E(dx)\right)\Big|_{\mathcal{H}},\quad \phi\in C(\mathcal{X}),
\end{equation}
where $C(\mathcal{X})$ is the space of all continuous functions on $\mathcal{X}$.
Recall that each contraction $T$ (that is, $\|T\|\leq 1$) on a Hilbert space $\mathcal{H}$ has a minimal unitary dilation $U$, that is $U$ is a unitary operator on a Hilbert space $\mathcal{K}$,~ $\mathcal{H}\subseteq \mathcal{K}$,~$T^n=P_{\mathcal{H}}U^n|_{\mathcal{H}}$ for $n\geq 0$ and $\mathcal{K}$ is the closed linear span of $U^n\mathcal{H}$,~$n\in \mathbb{Z}$ (see \cite{NGFO}, Ch. I, Theorem~4.2). Here $P_{\mathcal{H}}$ is the orthogonal projection on $\mathcal{K}$ onto $\mathcal{H}$. The \emph{semi-spectral measure} $\mathcal{E}_T$ of $T$ is defined by
\begin{equation}\label{spectralmeasurecon}
	\mathcal{E}_T(\Delta)\stackrel{\text{def}}{=}P_{\mathcal{H}}E_U(\Delta)|_{\mathcal{H}},
\end{equation}
where $\int\limits_0^{2\pi} e^{it}~E_U(dt)$ is the spectral representation of $U$, $E_U(\cdot)$ is the spectral measure determined uniquely by the unitary operator $U$ such that it is continuous at $t=0$, that is, $E_U(0)=0$ (see page 281, \cite{RN}), and $\Delta$ is a Borel subset of $[0,2\pi]$. Then it is easy to see that
\begin{equation}\label{funcalcon}
	T^n=\int_0^{2\pi}e^{int}~\mathcal{E}_T(dt),\quad n\in \mathbb{N}\cup \{0\}.
\end{equation}
It is important to note that, for the first time, semi-spectral measures in perturbation theory (in particular, in the context of double operator integrals) were used in \cite{P1987}. In this connection, it is important to note that the existence of integrals with respect to a semi-spectral measure $\mathcal{E}_T(\cdot)$ appeared in the above formula \eqref{semieq} as well as direct functional calculus (without using the dilation reasoning) is discussed in detail in \cite{MM2003}. Moreover, a concept of multiplicity function for a semi-spectral measure is discussed there and it was established for the first time in \cite{MM2003} that the measures $\mathcal{E}_T(\cdot)$ and $E_U(\cdot)$ are spectrally equivalent. Note also that a simple proof of the classical  Naimark's theorem \cite{Nai} was obtained in \cite{MM2003}. For more on semi-spectral measures and related stuff, we refer to \cite{ALPP}. 

For $1\leq p\leq \infty$, the Hardy space $H^p(\mathbb{T})$ stand for the set $\{f\in L^p(\mathbb{T}):~ \hat{f}(n)=0,~\text{for all}~ n<0\}$. One of the basic facts about $H^p(\mathbb{T})$ spaces is that they have preduals (see \cite[Theorem~4.15]{hbspace}). In particular, $H^{\infty}(\mathbb{T})$ is isometrically isomorphic to the dual of the factor-space $L^1(\mathbb{T})/H^1(\mathbb{T})$ and furthermore, for every $f\in L^1(\mathbb{T})$, we have 
\begin{equation*}
	\|[f]\|_{L^1(\mathbb{T})/H^1(\mathbb{T})} = \sup_{\substack{\|g\|_{H^{\infty}(\mathbb{T})}\leq 1 }}\Bigg\lvert\int_{\cir} g(z) f(z)dz\Bigg\lvert.
\end{equation*}
Now we denote the set of all complex polynomials by $\mathcal{P}(\cir)$. It is important to note that the supremum in the above equality can be taken over the set $\mathcal{P}(\cir)$. In other words, we need the following well known result to calculate the norm on the factor space $L^1(\mathbb{T})/H^1(\mathbb{T})$ in later sections.
\begin{lem}\label{supnorm}
	For every $f\in L^1(\mathbb{T})$, the equality
	\begin{equation*}
		\begin{split}
			\|[f]\|_{L^1(\mathbb{T})/H^1(\mathbb{T})} = \sup_{\substack{g\in\mathcal{P}(\cir);~\|g\|_{H^{\infty}}\leq 1 }}\Bigg\lvert\int_{\cir} g(z) f (z)dz\Bigg\lvert 
		\end{split}
	\end{equation*}
	holds, where $\mathcal{P}(\cir)$ is the set of all complex polynomials.
\end{lem}
Moreover, to prove our main results, we need the following fundamental estimate, which is obtained in \cite[Theorem~6.1]{KISHUL} (see also \cite[Theorem~4.2 and (3.2)]{Peller09}).

\begin{thm}\label{estimatethm}
	If $f\in \mathcal{P}(\cir)$, then for all contractions $T,T_0$ on $\hil$,
	\begin{equation}
		\begin{split}
			\left\|f(T)-f(T_0)\right\|_2\,&\leq\, \|f'\|_{\infty}~\|T-T_0\|_2\qquad\text{ if } T-T_0\in\hils,\\
			\text{ and }\left\|f(T)X-Xf(T_0)\right\|_2\,&\leq\, \|f'\|_{\infty}~\|TX-XT_0\|_2\qquad\text{ if }X\in\hils,
		\end{split}
	\end{equation} where $\|f'\|_{\infty} =\sup\limits_{t\in [0,2\pi)}|f'(e^{it})|$.
\end{thm}

Next we introduce the set of functions $\mathcal{F}_2(\mathbb{T})$ and $\mathcal{F}_2^+(\mathbb{T})$ which will be useful in our later sections.
\begin{equation*}
	\begin{split}
		\mathcal{F}_2(\mathbb{T}):=&\Big\{f(z)=\sum_{k=-\infty}^{\infty}\hat{f}(k)z^k\in C^2(\mathbb{T}):\sum_{k=-\infty}^{\infty}|k|^2|\hat{f}(k)|<\infty \Big\},\\
		\mathcal{F}_2^+(\mathbb{T}):=&\Big\{f(z)=\sum_{k=0}^{\infty}\hat{f}(k)z^k\in C^2(\mathbb{T}):\sum_{k=0}^{\infty}|k|^2|\hat{f}(k)|<\infty \Big\},
	\end{split}
\end{equation*}
where $\big\{\hat{f}(k):~k\in \mathbb{Z}\big\}$ is the Fourier coefficients of $f$ and $C^2(\mathbb{T})$ is the collection of all $2$-times continuously differentiable functions on $\mathbb{T}$. Let $\phi\in\mathcal{F}_2(\mathbb{T})$ be such that $\phi(e^{it})=\sum\limits_{k=-\infty}^{\infty}\hat{\phi}(k)e^{ikt}$. Now we introduce the functions, namely  $\phi_{+}(e^{it})=\sum\limits_{k=0}^{\infty}\hat{\phi}(k)e^{ikt}$ and $\phi_{-}(e^{it})=\sum\limits_{k=1}^{\infty}\hat{\phi}(-k)e^{ikt}$. Then $\phi(e^{it})=\phi_{+}(e^{it})+\phi_{-}(e^{-it})$ and $\phi_{\pm}\in \mathcal{F}_2^+(\mathbb{T})$. Thus for a given contraction $T$ on $\mathcal{H}$, we set
\begin{align}\label{INTeq1}
	\phi_{+}(T)=\sum\limits_{k=0}^{\infty}\hat{\phi}(k)T^k,~   \phi_{-}(T)=\sum\limits_{k=1}^{\infty}\hat{\phi}(-k){T^*}^k, \text{ and } \phi(T)=\phi_{+}(T)+\phi_{-}(T).
\end{align}

\section{\bf Trace formula in finite dimension}
We begin the section with the following differentiation formula for monomials of contractions that can be established directly by definition of the G\^{a}teaux derivative (with convergence in the operator norm).	

\begin{lem}\label{th1}
	Let $T$ and $T_0$ be two contractions in an infinite dimensional separable Hilbert space $\hil$. Let $V=T-T_0$, $T_s=T_0+sV,~~s\in [0,1]$, and $p(z)=z^r~(r\geq 2), z\in \mathbb{T}$. Then 
	\begin{align}\label{exp1}
		\dds \big\{p(T_s)\big\}=\sum_{j=0}^{r-1} T_s^{r-j-1} V T_s^j. 		 
	\end{align}
\end{lem}

\begin{proof}
	For $p(z)=z^r~(r\geq 2), ~z\in \mathbb{T}$, we have
	\begin{align}\label{exp2}
		\frac{p(T_{s+h})-p(T_s)}{h}=\dfrac{1}{h} \sum_{j=0}^{r-1} T_{s+h}^{r-j-1}(T_{s+h}-T_s)T_s^j
		=\sum_{j=0}^{r-1} T_{s+h}^{r-j-1}VT^j_s,
	\end{align}
	and hence 
	\begin{align*}
		&\left\|\frac{p(T_{s+h})-p(T_s)}{h} - \sum_{j=0}^{r-1} T_s^{r-j-1} V T_s^j 	\right\|\\
		\leq&\, |h|\left\{ \sum_{j=0}^{r-2}\sum_{k=0}^{r-j-2} \left\|T_s+hV\right\|^{r-j-k-2}\|V\|\left\|T_s\right\|^k\|V\|\left\|T_s\right\|^j\right\},
	\end{align*}
	which converges to $0$ as $h\longrightarrow 0$. This completes the proof. 
\end{proof}
The following lemma essentially obtained in \cite[Theorem 2.2 (i)]{CDP} whenever $A\in \mathcal{B}_2(\mathcal{H})$ is self-adjoint and $T_0$ is a unitary operator by using the definition of the G\^{a}teaux derivative. But if we consider $A,T_0\in \mathcal{B}(\mathcal{H})$, then the similar proof is also valid for the pair $(A,T_0)$.  
\begin{lem}\label{multifinitelemma}
	Let $(A,T_0)$ be a pair of bounded linear operator in an infinite dimensional separable Hilbert space $\hil$. Let  $T_s=e^{isA}T_0,~~s\in \mathbb{R}$, and $p(z)=z^r~(r\in\mathbb{Z}), z\in \mathbb{T}$. Then 
	\begin{align}\label{conpathexp}
		\dds \big\{p(T_s)\big\}=
		\begin{cases}
			\sum\limits_{j=0}^{r-1} T_s^{r-j-1} (iA) T_s^{j+1} &\text{ if } r\geq 1\\
			\quad 0 &\text{ if } r=0\\
			-\sum\limits_{j=0}^{|r|-1} {(T_s^*)}^{|r|-j} (iA^*) {(T_s^*)}^{j} &\text{ if } r\leq -1
		\end{cases} 		 
	\end{align}
\end{lem}

The following theorem states Koplienko trace formula for pairs of contractions via linear path in finite dimension.
\begin{thm}\label{th2(1)}
	Let $(N,N_0)$ be a pair of contraction on a finite dimensional Hilbert space $\hil$, and $V=N-N_0$. Let $N_s=N_0+sV,~s\in[0,1]$ and $p(\cdot)$ be any complex polynomial. Then there exists a $L^1(\cir)$-function $\eta$ such that 
	\begin{align}\label{Equation1}
		\operatorname{Tr}\Bigg\{p(N)-p(N_0)-\at{\dds}{s=0} \big\{p(N_s)\big\} \Bigg\}=\int_{\cir}
		p''(z)\eta(z)dz,
	\end{align}
	where $p(\cdot)$ is any complex polynomial and 
	\begin{equation}\label{aeq1}
		\eta(z)=\int_{0}^{1}\operatorname{Tr} \Big[V\Big\{\mathcal{E}_0(Arg(z))-\mathcal{E}_s(Arg(z))\Big\}\Big]ds, ~z\in\cir,
	\end{equation}
	where $\mathcal{E}_s(\cdot)$ and $\mathcal{E}_0(\cdot)$ are the semi-spectral measures corresponding to the contractions $N_s$ and $N_0$ respectively and  $Arg(z)$ is the  principle argument of $z$. 
	Furthermore, the class of all $\eta$'s satisfying \eqref{Equation1} corresponds to a unique element $[\eta]\in L^1(\mathbb{T})/H^1(\mathbb{T})$ such that 
	\begin{equation}\label{eq35class}
		\|[\eta]\|_{L^1(\mathbb{T})/H^1(\mathbb{T})}
		\leq \frac{1}{2} ~\|V\|_2^2.
	\end{equation}
	
\end{thm}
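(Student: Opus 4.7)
The plan is to establish the identity for monomials $p(z)=z^r$ by linearity, identify $\eta$ explicitly, and then extract the $L^1/H^1$-bound from a double operator integral estimate. The cases $r=0,1$ are trivial (both sides of \eqref{Equation1} vanish), so assume $r\ge 2$. Since $N_s=(1-s)N_0+sN$ is a contraction on $\hil$, Lemma~\ref{th1} applies; combining it with the fundamental theorem of calculus and the cyclicity identity $\operatorname{Tr}(N_s^{r-j-1}VN_s^j)=\operatorname{Tr}(VN_s^{r-1})$, the left side of \eqref{Equation1} reduces to $r\int_0^1[\operatorname{Tr}(VN_s^{r-1})-\operatorname{Tr}(VN_0^{r-1})]\,ds$.

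Next, I invoke the semi-spectral calculus \eqref{funcalcon} to write $N_s^{r-1}=\ipi e^{i(r-1)t}\mathcal{E}_s(dt)$ and set $\mu_s(t):=\operatorname{Tr}[V\mathcal{E}_s([0,t])]$. Integration by parts on $\ipi e^{i(r-1)t}d\mu_s(t)$ generates boundary terms that, thanks to $\mathcal{E}_s(\{0\})=0$ and $\mathcal{E}_s([0,2\pi])=I$, both equal $\operatorname{Tr}(V)$ and are $s$-independent, hence cancel in the difference:
\begin{align*}
\operatorname{Tr}(VN_s^{r-1})-\operatorname{Tr}(VN_0^{r-1})=-i(r-1)\ipi e^{i(r-1)t}[\mu_s(t)-\mu_0(t)]\,dt.
\end{align*}
Parametrizing $\cir$ by $z=e^{it}$, with $dz=ie^{it}dt$ and $p''(z)=r(r-1)z^{r-2}$, this expression coincides with $\int_{\cir}p''(z)\eta(z)dz$ exactly when $\eta(e^{it})=\int_0^1[\mu_0(t)-\mu_s(t)]ds$, which is precisely \eqref{aeq1}. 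Since $|\eta(e^{it})|\le 2\|V\|_1$, certainly $\eta\in L^1(\cir)$.

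For the bound \eqref{eq35class} I would apply Lemma~\ref{supnorm}: fix $g\in\mathcal{P}(\cir)$ with $\|g\|_\infty\le 1$, antidifferentiate term-by-term to pick a polynomial $p$ with $p''=g$, and rewrite $\int_{\cir}g(z)\eta(z)dz=\operatorname{Tr}(\Delta p)$ via \eqref{Equation1}, where $\Delta p:=p(N)-p(N_0)-\at{\dds}{s=0}p(N_s)$. Taylor's formula with integral remainder applied to $s\mapsto p(N_s)$ gives $\Delta p=\int_0^1(1-s)\dfrac{d^2}{ds^2}p(N_s)\,ds$. A direct calculation (differentiating Lemma~\ref{th1} once more and then using cyclicity to collect terms of the form $\operatorname{Tr}(VN_s^lVN_s^{k-l})$) yields
\begin{align*}
\operatorname{Tr}\!\left(\dfrac{d^2}{ds^2}p(N_s)\right)=\operatorname{Tr}(V\Phi_s(V)),\quad \Phi_s(V):=\iint\frac{G(e^{it})-G(e^{i\tau})}{e^{it}-e^{i\tau}}\mathcal{E}_s(dt)\,V\,\mathcal{E}_s(d\tau),
\end{align*}
where $G$ is any antiderivative of $g$ (so that the $(k+1)^{-1}$ factors arising from $p=\sum a_k z^{k+2}/((k+1)(k+2))$ reassemble into the divided difference of $G$).

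The main obstacle is bounding $\Phi_s(V)$ in Hilbert-Schmidt norm. I would realize $\mathcal{E}_s$ as the compression $P_{\hil}E_{U_s}|_{\hil}$ of the spectral measure of the Sch\"affer unitary dilation $U_s$ on a larger (finite-dimensional) Hilbert space $\mathcal{K}$; extending $V$ to $\mathcal{K}$ by zero, $\Phi_s(V)$ equals the compression to $\hil$ of the spectral double operator integral on $\mathcal{K}$. Because $G$ is a polynomial, the identity $G(z_1)-G(z_2)=(z_1-z_2)\int_0^1 g(z_2+u(z_1-z_2))du$ together with the maximum modulus principle on $\overline{\mathbb{D}}$ yields $\bigl|\tfrac{G(z_1)-G(z_2)}{z_1-z_2}\bigr|\le\|g\|_\infty$ for $z_1,z_2\in\cir$. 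Diagonalizing $U_s$ in an orthonormal eigenbasis of $\mathcal{K}$ reduces the spectral DOI to a Schur multiplier on matrix entries, so $\|\Phi_s(V)\|_2\le\|g\|_\infty\|V\|_2\le\|V\|_2$. Cauchy-Schwarz gives $|\operatorname{Tr}(V\Phi_s(V))|\le\|V\|_2^2$, and integrating against $(1-s)$ produces $|\operatorname{Tr}(\Delta p)|\le\tfrac12\|V\|_2^2$. Taking the supremum over $g$ completes the proof of \eqref{eq35class}.
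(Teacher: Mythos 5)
Your derivation of the identity \eqref{Equation1} and of the explicit formula \eqref{aeq1} is essentially the paper's own argument: reduction to monomials, cyclicity of the trace together with Lemma~\ref{th1} and the fundamental theorem of calculus, the semi-spectral calculus \eqref{funcalcon}, and one integration by parts whose boundary contributions are $s$-independent and cancel; the identification of $\eta$ and the remark that it is bounded (hence in $L^1(\cir)$) match the paper. Where you genuinely diverge is the bound \eqref{eq35class}. The paper pairs $\eta$ against a test polynomial $f$ with $\|f\|_\infty\le 1$, uses Fubini and integrates by parts in the explicit formula \eqref{aeq1} (the boundary terms vanish because $g(e^{it})=\int_0^t f(e^{is})ie^{is}\,ds$ vanishes at $t=0,2\pi$), arrives at $\int_0^1 \operatorname{Tr}\big[V\{g(N_s)-g(N_0)\}\big]\,ds$, and then quotes the operator Lipschitz estimate of Theorem~\ref{estimatethm} to get $\|g(N_s)-g(N_0)\|_2\le\|f\|_\infty\, s\,\|V\|_2$, the constant $\tfrac12$ coming from $\int_0^1 s\,ds$; Lemma~\ref{supnorm} then gives \eqref{eq35class}. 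You instead take $g\in\mathcal P(\cir)$ with $\|g\|_\infty\le1$, solve $p''=g$, use the already established trace formula plus Taylor's integral remainder, identify $\operatorname{Tr}\frac{d^2}{ds^2}p(N_s)$ with $\operatorname{Tr}(V\Phi_s(V))$ for a divided-difference double operator integral (the bookkeeping for $p(z)=z^r$ does check out, with no stray factor of $2$ after cyclicity), and bound $\Phi_s$ through a dilation/Schur-multiplier estimate, the constant $\tfrac12$ now coming from $\int_0^1(1-s)\,ds$. In effect you reprove, via the $\mathcal B_2$-boundedness of spectral double operator integrals, the chord/maximum-modulus bound on the divided difference, and unitary dilation, the same kind of estimate that Theorem~\ref{estimatethm} packages; the paper's route is shorter because it cites that estimate, yours is more self-contained but heavier.

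One concrete inaccuracy in your bound argument: the Sch\"{a}ffer (or minimal) unitary dilation space $\mathcal K$ of a non-unitary contraction is never finite dimensional, even when $\hil$ is (the minimal dilation of $0$ on $\mathbb C$ is the bilateral shift on $\ell^2(\mathbb Z)$), so you cannot literally ``diagonalize $U_s$ in an orthonormal eigenbasis of $\mathcal K$.'' This is repairable in either of two standard ways: since the symbol is a polynomial, only finitely many powers of $U_s$ enter, so a finite-dimensional unitary $N$-dilation (Egerv\'ary) suffices and the eigenbasis argument then applies; or keep the infinite-dimensional dilation and use the general Birman--Solomyak fact that a spectral double operator integral with bounded symbol acts on $\mathcal{B}_2(\mathcal K)$ with norm at most $\sup|\phi|$, which needs no eigenbasis. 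With that repair your argument closes and yields the same constant $\tfrac12\|V\|_2^2$, so the proposal is correct up to this fixable point, and its second half is a genuinely different (double-operator-integral) route to \eqref{eq35class} than the paper's.
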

\begin{proof}
	It will be sufficient to prove the theorem for $p(z)=z^r~, z\in \mathbb{T}$. Note that for $r = 0$ or $1$, both sides of \eqref{eq21} are identically zero. By using the cyclicity of trace, applying Lemma~\ref{th1}, and noting that the trace now is a finite sum, we have that for
	$p(z)=z^r~(r\geq 2), z\in \mathbb{T}$,
	\begin{align*}
		&\operatorname{Tr}\Big\{p(N)-p(N_0)-\at{\dds}{s=0} \big\{p(N_s)\big\}\Big\}=
		\operatorname{Tr}\Big\{\int_{0}^{1}\dds \big\{p(N_s)\big\}~ds\Big\}-\operatorname{Tr}\Big\{\at{\dds}{s=0} \big\{p(N_s)\big\}\Big\}\\
		& = \int_{0}^{1} \operatorname{Tr}\Big\{\sum_{j=0}^{r-1} N_s^{r-j-1} V N_s^j \Big\}~ds - \int_{0}^{1}\operatorname{Tr}\Big\{\sum_{j=0}^{r-1} N_0^{r-j-1} V N_0^j\Big\}~ds\\
		& = r\int_{0}^{1} \Big[\operatorname{Tr}\Big\{V\left(N_s^{r-1}-N_0^{r-1}\right)\Big\}\Big]~ds = \operatorname{Tr}\Big\{r V \int_{0}^{1} ds \int_{0}^{2\pi} e^{i(r-1)t} \big[\mathcal{E}_s(dt)-\mathcal{E}_0(dt)\big]\Big\},
	\end{align*}
	where $\mathcal{E}_s(\cdot)$ and $\mathcal{E}_0(\cdot)$ are the semi-spectral measures corresponding to the contractions $N_s$ and $N_0$ respectively (see \eqref{spectralmeasurecon} and \eqref{funcalcon} in Section 2). Next by performing integration by-parts we have that 
	\begin{align*}
		& \operatorname{Tr}\Big\{p(N)-p(N_0)-\at{\dds}{s=0} \big\{p(N_s)\big\}\Big\} \\
		&= \operatorname{Tr}\Big\{ r V \int_{0}^{1} ds~  \Big(e^{i(r-1)t} \big[\mathcal{E}_s(t)-\mathcal{E}_0(t)\big]\Big|_{t=0}^{2\pi}-i(r-1) \int_{0}^{2\pi}e^{i(r-1)t} \big[\mathcal{E}_s(t)-\mathcal{E}_0(t)\big] dt\Big)~\Big\}\\
		&=i r(r-1)\int_{0}^{2\pi} e^{i(r-1)t} \Big\{\int_{0}^{1}\operatorname{Tr} [V\big\{\mathcal{E}_0(t)-\mathcal{E}_s(t)\big\}]ds\Big\}dt,
	\end{align*} 
	which by substituting $z=e^{it},~~t\in[0,2\pi]$  and $~dt=\dfrac{dz}{iz}$ yields
	\begin{align*}
		&\operatorname{Tr}\Big\{p(N)-p(N_0)-\at{\dds}{s=0} \big\{p(N_s)\big\}\Big\} \\
		& =\int_{\cir}r(r-1)z^{r-2}\Big\{\int_{0}^{1}\operatorname{Tr} [V\big\{\mathcal{E}_0(Arg(z))-\mathcal{E}_s(Arg(z))\big\}]~ds\Big\} ~dz
		=\int_{\cir}
		p''(z)\eta(z) dz,
	\end{align*}
	where $Arg(z)$ is the  principle argument of $z$ and we have set
	\begin{equation*}
		\eta(z)=\int_{0}^{1}\operatorname{Tr} \Big[V\Big\{\mathcal{E}_0(Arg(z))-\mathcal{E}_s(Arg(z))\Big\}\Big]ds, ~z\in\cir.
	\end{equation*}
	Let $f$ be a complex polynomial on $\cir$, and set $g(e^{it})=\int_{0}^{t}f(e^{is})ie^{is}ds,~t\in[0,2\pi]$. Next we observe that $g(e^{i2\pi})=g(e^{i0})=0$, and $\ddt \big\{g(e^{it})\big\}=ie^{it}f(e^{it})$. Now by using the above expression \eqref{aeq1} of $\eta$ and using Fubini's theorem to interchange the orders of integration and integrating by-parts, we have that 
	\begin{align*}
		&\int_{\cir} f(z)\eta(z) dz =\int_{0}^{2\pi} f(e^{it})\eta(e^{it}) ie^{it}dt =\int_{0}^{2\pi}\ddt \big\{g(e^{it})\big\}\eta(e^{it})dt\\
		&=\int_{0}^{2\pi}\ddt \big\{g(e^{it})\big\}\left(\int_{0}^{1}\operatorname{Tr} \Big[V\Big\{\mathcal{E}_{0}(t)-\mathcal{E}_{s}(t)\Big\}\Big]~ds\right)~dt\\
		&=\int_{0}^{1} ds \int_{0}^{2\pi}\ddt \big\{g(e^{it})\big\}~\operatorname{Tr} \Big[V\Big\{\mathcal{E}_{0}(t)-\mathcal{}E_{s}(t)\Big\}\Big]dt\\
		&=\int_{0}^{1} ds ~\Bigg(g(e^{it})~\operatorname{Tr} \Big[V\Big\{\mathcal{E}_{0}(t)-\mathcal{E}_{s}(t)\Big\}\Big]\Bigg\lvert_{t=0}^{2\pi} -\int_{0}^{2\pi}g(e^{it})~\operatorname{Tr} \Big[V\Big\{\mathcal{E}_{0}(dt)-\mathcal{E}_{s}(dt)\Big\}\Big]\Bigg)\\
		&=-\int_{0}^{1}ds\int_{0}^{2\pi}g(e^{it})~\operatorname{Tr} \Big[V\Big\{\mathcal{E}_{0}(dt)-\mathcal{E}_{s}(dt)\Big\}\Big] =\int_{0}^{1}ds~\operatorname{Tr} \Big[V\Big\{g(N_{s})-g(N_{0})\Big\}\Big].
	\end{align*}
	Therefore using Theorem~\ref{estimatethm} we get
	\begin{align*}
		& \Bigg\lvert\int_{\cir} f(z)\eta (z)dz\Bigg\lvert=~ \Bigg\lvert\int_{0}^{1}ds~\operatorname{Tr}\Big[V\Big\{g(N_{s})-g(N_{0})\Big\}\Big]\Bigg\lvert
		\leq \int_{0}^{1}ds~\Big\lvert \operatorname{Tr}\Big[V\Big\{g(N_{s})-g(N_{0})\Big\}\Big]\Big\lvert\\
		& \leq~\int_{0}^{1}\norm{V}_2\norm{g(N_{s})-g(N_{0})}_2
		\leq \int_{0}^{1}\norm{g'}_\infty\norm{V}_2\norm{N_{s}-N_{0}}_2\\
		&\leq \norm{f}_\infty\norm{V}_2^2\int_{0}^{1}s~ds
		~=\frac{1}{2}\norm{f}_\infty\norm{V}_2^2,
	\end{align*}
	and hence by using Lemma~\ref{supnorm} we conclude that 
	\begin{equation*}
		\begin{split}
			\|[\eta]\|_{L^1(\mathbb{T})/H^1(\mathbb{T})} = \sup_{\substack{f\in\mathcal{P}(\cir);~\|f\|_{H^{\infty}(\mathbb{T})}\leq 1 }}\Bigg\lvert\int_{\cir} f(z)\eta (z)dz\Bigg\lvert \leq \frac{1}{2}~\norm{V}_2^2.
		\end{split}
	\end{equation*}
	This completes the proof.
	
\end{proof}

The following theorem states Koplienko-Neidhardt trace formula for pairs of contractions via multiplicative path in finite dimension.
\begin{thm}\label{th2(2)}
	Let $T_0$ be a contraction in a finite dimensional Hilbert space $\hil$ and let $A=A^*$ $\in\mathcal{B}(\mathcal{H})$. Denote $T_s=e^{isA}T_0, ~s\in[0,1],$ and $T=T_1$. Then there exists an $L^1([0,2\pi])$-function $\tilde{\eta}$ such that   
	\begin{align}\label{eq21}
		\operatorname{Tr}\Bigg\{p(T)-p(T_0)-\at{\dds}{s=0} \big\{p(T_s)\big\} \Bigg\}=\int_{0}^{2\pi}
		\dfrac{d^2}{dt^2}\Big\{p(e^{it})\Big\}\tilde{\eta}(t)dt,
	\end{align}
	where $p(\cdot)$ is any complex polynomial on $\mathbb{T}$ with complex coefficients and 
	\begin{equation}\label{emul}
		\tilde{\eta}(t)=\int_{0}^{1}\operatorname{Tr} \Big[A\Big\{\mathcal{F}_0(t)-\mathcal{F}_s(t)\Big\}\Big]ds, ~t\in[0,2\pi],
	\end{equation}
	where $\mathcal{F}_s(\cdot)$ and $\mathcal{F}_0(\cdot)$ are the semi-spectral measures corresponding to the contractions $T_s$ and $T_0$ respectively. 
	Moreover, 	\begin{equation}\label{etabdd}
		\|[\tilde{\eta}]\|_{L^1(\mathbb{T})/H^1(\mathbb{T})}
		\leq \frac{1}{2} ~\|A\|_2^2.
	\end{equation}
	
\end{thm}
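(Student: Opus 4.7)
The plan is to follow the template of the proof of Theorem~\ref{th2(1)} with the two ingredients changed to reflect the multiplicative path: Lemma~\ref{multifinitelemma} replaces Lemma~\ref{th1}, and the simple identity $\|T_s-T_0\|_2 = s\|V\|_2$ used there is replaced by the bound $\|T_s-T_0\|_2\leq s\|A\|_2$ derived below. It again suffices to treat monomials $p(z)=z^r$, $r\geq 0$; the cases $r=0,1$ yield both sides zero trivially (or directly from the formula), so I concentrate on $r\geq 2$.

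First I would differentiate along the path using Lemma~\ref{multifinitelemma} to obtain
\[
\frac{d}{ds}\{p(T_s)\}=\sum_{j=0}^{r-1}T_s^{r-j-1}(iA)T_s^{j+1},
\]
and then exploit cyclicity of the (finite-dimensional) trace to collapse each summand to $\operatorname{Tr}\{iA\,T_s^r\}$, producing $r\operatorname{Tr}\{iA\,T_s^r\}$. Writing $p(T)-p(T_0)=\int_0^1\frac{d}{ds}\{p(T_s)\}\,ds$ and subtracting the $s=0$ derivative gives
\[
\operatorname{Tr}\!\Bigl\{p(T)-p(T_0)-\at{\dds}{s=0}\{p(T_s)\}\Bigr\}=ir\int_0^1\operatorname{Tr}\{A(T_s^r-T_0^r)\}\,ds.
\]
Using \eqref{funcalcon} to represent $T_s^r$ and $T_0^r$ against the semi-spectral measures $\mathcal F_s$ and $\mathcal F_0$, and integrating by parts in $t$ on $[0,2\pi]$, the boundary term vanishes (since $\mathcal F_s(0)=0=\mathcal F_s(2\pi)-I$ for all $s$), yielding
\[
\operatorname{Tr}\{A(T_s^r-T_0^r)\}=ir\int_0^{2\pi}e^{irt}\operatorname{Tr}\{A(\mathcal F_0(t)-\mathcal F_s(t))\}\,dt.
\]
Substituting back, the prefactor becomes $-r^2=\frac{d^2}{dt^2}\{e^{irt}\}$, which produces the claimed identity \eqref{eq21} with $\tilde\eta$ as in \eqref{emul}.

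For the norm estimate, I would pick a test polynomial $f\in\mathcal P(\mathbb T)$ with $\|f\|_{H^\infty}\leq 1$ and set $g(e^{it})=\int_0^t f(e^{is})ie^{is}ds$; since $f$ has only nonnegative Fourier modes, $g(e^{i0})=g(e^{i2\pi})=0$ and $g'(z)=f(z)$. Substituting $\tilde\eta$ into $\int_{\mathbb T}f(z)\tilde\eta(z)\,dz$ and integrating by parts in $t$ (exactly as in Theorem~\ref{th2(1)}) converts the integral to $\int_0^1\operatorname{Tr}\{A(g(T_s)-g(T_0))\}\,ds$. Theorem~\ref{estimatethm} then gives $\|g(T_s)-g(T_0)\|_2\leq \|g'\|_\infty\|T_s-T_0\|_2=\|f\|_\infty\|T_s-T_0\|_2$.

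The one genuinely new piece, and the only mild obstacle, is bounding $\|T_s-T_0\|_2$ on the multiplicative path. I would argue via the spectral theorem for the self-adjoint $A$: the scalar inequality $|e^{isx}-1|\leq s|x|$ for $x\in\mathbb R$, $s\in[0,1]$, upgrades to $\|(e^{isA}-I)\xi\|\leq s\|A\xi\|$, and summing over an orthonormal basis yields $\|(e^{isA}-I)T_0\|_2\leq s\|AT_0\|_2\leq s\|A\|_2\|T_0\|\leq s\|A\|_2$. Combining these estimates gives $|\int_{\mathbb T}f(z)\tilde\eta(z)\,dz|\leq\|A\|_2^2\int_0^1 s\,ds=\tfrac12\|A\|_2^2$, and Lemma~\ref{supnorm} promotes this to the bound \eqref{etabdd} for $\|[\tilde\eta]\|_{L^1(\mathbb T)/H^1(\mathbb T)}$.
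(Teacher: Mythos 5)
Your proposal is correct and is essentially the paper's own argument: Lemma~\ref{multifinitelemma} plus cyclicity of the finite-dimensional trace, the semi-spectral representation \eqref{funcalcon} and integration by parts yield \eqref{eq21}, and the bound \eqref{etabdd} is obtained exactly as in Theorem~\ref{th2(1)} via Lemma~\ref{supnorm} and Theorem~\ref{estimatethm} (you merely make explicit the estimate $\|T_s-T_0\|_2=\|(e^{isA}-I)T_0\|_2\le s\|A\|_2$, which the paper dispatches as ``a similar argument''). One small correction: for $r=1$ the two sides of \eqref{eq21} are \emph{not} identically zero on the multiplicative path (unlike the linear path), but your computation for $r\ge 2$ applies verbatim to $r=1$, so the conclusion is unaffected.
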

\begin{proof}
	It will be sufficient to prove the theorem for $p(z)=z^r~,r\in\mathbb{N}\cup \{0\}, z\in \mathbb{T}$. Note that for $r = 0$,  both sides of \eqref{eq21} are identically zero. Let $\mathcal{F}_s(\cdot)$ and $\mathcal{F}_0(\cdot)$ are the semi-spectral measures corresponding to the contractions $T_s$ and $T_0$ respectively (see \eqref{spectralmeasurecon} and \eqref{funcalcon} in Section 2). By using the cyclicity of trace, applying Lemma~\ref{multifinitelemma}, and noting that the trace now is a finite sum, we have that for
	$p(z)=z^r~(r\geq 2), z\in \mathbb{T}$,
	\begin{align*}
		& \operatorname{Tr}\Big\{p(T_1)-p(T_0)-\at{\dds p(T_s)}{s=0}\Big\}= \operatorname{Tr}\left\{\int_{0}^{1}\left(\dds p(T_s)-\at{\ddt p(T_t)}{t=0}\right)ds\right\}\\
		&= \operatorname{Tr}\left\{\int_{0}^{1}\left(\sum\limits_{j=0}^{r-1} T_s^{r-j-1} (iA) T_s^{j+1}-\sum\limits_{j=0}^{r-1} T_0^{r-j-1} (iA) T_0^{j+1}\right)ds\right\}\\
		&= \operatorname{Tr}\left\{(ir)A\int_{0}^{1}\left(T_s^r-T_0^r\right)ds\right\}\\
		&= \operatorname{Tr}\left\{(ir)A\int_{0}^{1}ds\int_{0}^{2\pi}e^{irt}\Big(\mathcal{F}_s(dt)-\mathcal{F}_0(dt)\Big)\right\}\\
		&= \operatorname{Tr}\left[ (ir) A \int_{0}^{1} ds~  \Big\{e^{irt} \big(\mathcal{F}_s(t)-\mathcal{F}_0(t)\big)\Big|_{t=0}^{2\pi}-ir \int_{0}^{2\pi}e^{irt} \big(\mathcal{F}_s(t)-\mathcal{F}_0(t)\big) dt\Big\}~\right]\\
		&=\int_{0}^{2\pi}(ir)^2 e^{irt}
		\left[\int_0^1 \operatorname{Tr}\Big\{A\big(\mathcal{F}_0(t)-\mathcal{F}_s(t)\big)\Big\} ds\right]dt = \int_{0}^{2\pi} \frac{d^2}{dt^2}\Big\{p(e^{it})\Big\} \eta(t) dt.
	\end{align*}
	Therefore we have the formula \eqref{eq21} by setting $$\tilde{\eta}(t)=\int_0^1 \operatorname{Tr}\big\{A[\mathcal{F}_0(t)-\mathcal{F}_s(t)\big]\big\} ds. $$
	By repeating the similar argument as done in Theorem \ref{th2(1)} we obtain \eqref{etabdd}. This completes the proof.
\end{proof}

\begin{rem}
	It is easy to observe that the function $\tilde{\eta}$ in \eqref{emul} is real-valued. Also it is worth mentioning that, if we consider the polynomial $p(z)=z^r$ for $r\leq -1$, then by performing the similar calculations we also obtain the formula \eqref{eq21} along with the same spectral shift function $\tilde{\eta}$ as in \eqref{etabdd}. Therefore the identity \eqref{eq21} holds for every trigonometric polynomial and hence by the uniqueness of the Fourier series we conclude that the spectral shift function $\tilde{\eta}(\cdot)$ is unique up to an additive constant. Moreover, the formula \eqref{eq21} can also be extended for the class $\mathcal{F}_2(\cir)$.
\end{rem}

\section{\bf Reduction to finite dimension}
We begin the section by stating (without proof) the approximation theorem  which is essential in our context to reduce the problem into a finite dimensional one. Note that the following theorem is a special case of Theorem 2.2 in \cite{KBSAC}. Moreover, it is also worth mentioning that Voiculescu \cite{Voiapp} had earlier obtained related (though not the same) results. 

\begin{thm}(See \cite[Theorem 2.2]{KBSAC})\label{aproxth1}
	Let $(A_1, A_2)$ be a pair of commuting  bounded self-adjoint operators in an 
	infinite-dimensional separable Hilbert space $\mathcal{H}$.
	Then there exists a sequence $\{P_k\}$ of finite-rank projections such that, for $i=1,2$ 
	\begin{align}\label{eqapprox}
		P_k \uparrow I, ~~~\text{ and }~~ \left\|[A_i,P_k]\right\|_2 \longrightarrow 0, \text{ as } k\longrightarrow\infty.
	\end{align}	
\end{thm}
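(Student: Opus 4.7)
The plan is to reduce the commuting pair to a single bounded normal operator and then construct approximating finite-rank projections via spectral partitioning. Since $A_1$ and $A_2$ commute and are bounded self-adjoint, the operator $N := A_1 + iA_2$ is bounded normal, with $A_1 = (N+N^*)/2$ and $A_2 = (N-N^*)/(2i)$. Because $[N, P]^* = -[N^*, P]$ for any self-adjoint projection $P$, the identity $\|[N^*, P]\|_2 = \|[N, P]\|_2$ holds, and hence it suffices to produce a sequence of finite-rank projections $P_k \uparrow I$ with $\|[N, P_k]\|_2 \to 0$.

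For each $k \geq 1$, I would partition the compact spectrum $\sigma(N) \subset \mathbb{C}$ into disjoint Borel pieces $\Delta_{k,1}, \ldots, \Delta_{k, n_k}$ of diameter at most $1/k$, fix sample points $\lambda_{k,j} \in \Delta_{k,j}$, and let $Q_{k,j} := E_N(\Delta_{k,j})$ denote the corresponding spectral projections, which commute with $N$ and sum to $I$. Within each $Q_{k,j}\mathcal{H}$, choose a finite-dimensional subprojection $P_{k,j} \leq Q_{k,j}$ and set $P_k := \sum_j P_{k,j}$. Because the ranges $P_{k,j}\mathcal{H}$ lie inside the pairwise orthogonal, $N$-invariant subspaces $Q_{k,j}\mathcal{H}$, the operators $[N, P_{k,j}]$ have pairwise orthogonal ranges and so are orthogonal in the Hilbert-Schmidt inner product; using $[N, P_{k,j}] = [(N - \lambda_{k,j})Q_{k,j}, P_{k,j}]$ together with $\|(N-\lambda_{k,j})Q_{k,j}\| \leq 1/k$ and the triangle inequality, one obtains
\[
\|[N, P_k]\|_2^2 \;=\; \sum_{j=1}^{n_k} \|[N, P_{k,j}]\|_2^2 \;\leq\; \frac{4}{k^2}\, \operatorname{rank}(P_k).
\]

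The remaining task is to arrange the subprojections so that simultaneously $\operatorname{rank}(P_k) = o(k^2)$, $P_{k-1} \leq P_k$, and $P_k \to I$ strongly. Using separability, fix a countable dense set $\{v_m\}_{m \geq 1}$ in $\mathcal{H}$. At stage $k$, refine the partition from stage $k-1$ so that each $\Delta_{k-1,i}$ is a union of $\Delta_{k,j}$'s (this forces $Q_{k-1,i} = \sum_{\Delta_{k,j} \subseteq \Delta_{k-1,i}} Q_{k,j}$, which is exactly what makes monotonicity feasible), and enlarge each $P_{k-1,i}$ within the finer spectral subspaces by adjoining finitely many orthonormal vectors chosen so that $\|(I - P_k) v_m\| < 1/k$ for $m = 1, \ldots, k$ while keeping $\operatorname{rank}(P_k) \leq k$. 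The main obstacle is precisely this bookkeeping: enforcing monotonicity via nested spectral partitions, honoring the rank budget $\operatorname{rank}(P_k) = o(k^2)$ required for the commutator estimate, and still exhausting a dense subset of $\mathcal{H}$ in the limit. Once achieved, the resulting monotone sequence satisfies $P_k \to I$ strongly on all of $\mathcal{H}$ and $\|[N, P_k]\|_2 \leq 2/\sqrt{k} \to 0$, which yields the claim.
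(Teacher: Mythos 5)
The paper does not actually prove this statement—it is quoted without proof from \cite[Theorem 2.2]{KBSAC} (and is of the same genre as Voiculescu's norm-ideal perturbation results \cite{Voiapp})—so your argument has to stand on its own, and it does not: the step you defer as ``bookkeeping'' is the entire content of the theorem, and your scheme cannot deliver it. Your only tool is the per-cell triangle inequality, giving $\|[N,P_k]\|_2^2\le 4\delta^2\,\mathrm{rank}(P_k)$ for a partition of mesh $\delta$, so you need $\mathrm{rank}(P_k)=o(\delta^{-2})$. But the hypotheses allow joint spectral measures with a genuinely two-dimensional part, e.g.\ $A_1=M_x$, $A_2=M_y$ on $L^2([0,1]^2,dx\,dy)$, and then already capturing a \emph{single} vector forces the rank up to order $\delta^{-2}$: for $v\equiv 1$ one has $\|Q_{k,j}v\|^2=\text{area}(\Delta_{k,j})$, and since $\|P_k v\|^2=\sum_j\|P_{k,j}Q_{k,j}v\|^2$ with $\|P_{k,j}Q_{k,j}v\|\le\|Q_{k,j}v\|$, getting $\|(I-P_k)v\|$ small forces $P_{k,j}\neq 0$ on cells covering a fixed fraction of the square, i.e.\ $\mathrm{rank}(P_k)\gtrsim\delta^{-2}$. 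Then $4\delta^2\,\mathrm{rank}(P_k)$ is a constant, not $o(1)$, and decoupling the mesh from the index $k$ does not help: for a two-dimensional spectrum the $\delta^2$ gain per cell is exactly cancelled by the $\delta^{-2}$ cell count. (The same count does prove the analogous statement in $\mathcal{B}_p$ for $p>2$, and in $\mathcal{B}_2$ for a \emph{single} self-adjoint operator, where only $\sim\delta^{-1}$ cells occur; the Hilbert--Schmidt class is precisely the critical ideal for pairs, which is why the cited proofs must arrange cancellation between the per-cell commutators rather than estimate them separately.) So no refinement of your partition/rank bookkeeping can close the argument; your concluding bound $\|[N,P_k]\|_2\le 2/\sqrt{k}$ with $\mathrm{rank}(P_k)\le k$ and $\|(I-P_k)v_m\|<1/k$ for $m\le k$ is exactly what the computation above rules out.

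There is also a second, independent gap: monotonicity. Refining the partitions makes the spectral projections nested, but it does not make the previously chosen $P_{k-1,i}$ decompose along the finer cells: $\mathrm{ran}\,P_{k-1}$ contains vectors spread over many $\Delta_{k,j}$, whereas $\mathrm{ran}\,P_k=\bigoplus_j\mathrm{ran}\,P_{k,j}$ with $P_{k,j}\le Q_{k,j}$, so $P_{k-1}\le P_k$ would force each such vector to be captured cell by cell, again inflating $\mathrm{rank}(P_k)$ by the number of cells it meets. The correct preliminary reductions in your write-up (passing to $N=A_1+iA_2$, the identity $\|[N^*,P]\|_2=\|[N,P]\|_2$, the Hilbert--Schmidt orthogonality of the blocks $[N,P_{k,j}]$, and the bound $\|[N,P_{k,j}]\|_2\le 2\delta\sqrt{\mathrm{rank}\,P_{k,j}}$) are all fine, but the theorem itself requires the genuinely different, nontrivial mechanism of \cite{KBSAC} and \cite{Voiapp}, not a mesh-versus-rank count.
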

Using the above theorem we have the following useful lemmas which will be useful to reduce the problem into a finite dimensional case.
\begin{lem}\label{approxlm1}
	Let $(H_1, H_2)$ be a pair of commuting  bounded self-adjoint operators in an 
	infinite-dimensional separable Hilbert space $\mathcal{H}$. Let $A\in \mathcal{B}_2(\mathcal{H})$ be a self-adjoint operator and let $B\in \mathcal{B}_2(\mathcal{H})$. Then for $i=1,2$, there exists a sequence $\{P_k\}$ of finite rank projections  such that $P_k \uparrow I$, and 
	\begin{align*}
		\left\|P_k^\perp H_i P_k\right\|_2,~~\left\|P_k^\perp A P_k\right\|_2,~~\left\|P_k^\perp B P_k\right\|_2, \left\|P_k^\perp B^* P_k\right\|_2\longrightarrow 0 \text{ as } k\longrightarrow\infty.
	\end{align*}
\end{lem}
\begin{proof}
	Now by applying Theorem \ref{aproxth1} corresponding to the pair $(H_1,H_2)$, there exists a sequence  $\{P_k\}$ finite rank projections such that $P_k\uparrow I$, and
	\begin{align*}
		\left\|P_k^\perp H_i P_k\right\|_2\longrightarrow 0 \text{ as } k\longrightarrow \infty \quad \text{for}\quad i=1,2.
	\end{align*}
	Let  $\sum\limits_{j=1}^{\infty}\lambda_j\langle \cdot, e_j \rangle f_j$ and $\sum\limits_{j=1}^{\infty}\mu_j\langle \cdot, g_j \rangle h_j$ be the corresponding canonical decomposition of $A$ and $B$ respectively, where  $\sum\limits_{j=1}^{\infty}\lambda_j^2<\infty$,~$\sum\limits_{j=1}^{\infty}\mu_j^2<\infty$ and  $\{e_j\}$, $\{f_j\}$, $\{g_j\}$ and $\{h_j\}$ are set of orthonormal vectors. Now given $n\in\mathbb{N}$, there exists $L_n\in\mathbb{N}$ such that 
	\begin{align*}
		\sum\limits_{j=L_n+1}^{\infty}\lambda_j^2<\dfrac{1}{n}, \text{ and } \sum\limits_{j=L_n+1}^{\infty}\mu_j^2<\dfrac{1}{n}.
	\end{align*} 
	Set $A_{L_n}=\sum\limits_{j=1}^{L_n}\lambda_j\langle \cdot, e_j \rangle f_j$, 
	$A_{L_n}^*=\sum\limits_{j=1}^{L_n}\lambda_j\langle \cdot, f_j \rangle e_j$, $B_{L_n}=\sum\limits_{j=1}^{L_n}\mu_j\langle \cdot, g_j \rangle h_j$, $B_{L_n}^*=\sum\limits_{j=1}^{L_n}\mu_j\langle \cdot, h_j \rangle g_j$, and $\epsilon_n=\min\left\{\dfrac{1}{n},~\dfrac{\dfrac{1}{n}}{\sum\limits_{j=1}^{L_n}\lambda_j},~ \dfrac{\dfrac{1}{n}}{\sum\limits_{j=1}^{L_n}\mu_j}\right\}$. Since $P_k\uparrow I$ as $k\to \infty$, then there exists a natural number $a_n$ such that for each $f\in \{e_1,e_2,\ldots,e_{L_n}\}\cup\{f_1,f_2,\ldots,f_{L_n}\}\cup\{g_1,g_2,\ldots,g_{L_n}\}\cup\{h_1,h_2,\ldots,h_{L_n}\}$ we have
	$$\|(I-P_k)f\|<\epsilon_n \quad\forall~ k\geq a_n.$$
	Next we choose $a_n\in \mathbb{N}$ such that $a_n<a_{n+1}$ for each $n\in\mathbb{N}$. Therefore corresponding to the sub-sequence $\{P_{a_n}\}$, we have for $i=1,2,$
	\begin{align*}
		\left\|P_{a_n}^\perp H_i P_{a_n}\right\|_2 &\longrightarrow 0,\\
		\left\|P_{a_n}^\perp A P_{a_n}\right\|_2\leq &\left\|P_{a_n}^\perp (A-A_{L_n}) P_{a_n}\right\|_2+\left\|P_{a_n}^\perp A_{L_n} P_{a_n}\right\|_2\\
		\leq &\left\|A-A_{L_n}\right\|_2+ \left\|P_{L_n}^\perp A_{L_n}\right\|_2
		< \dfrac{1}{n} + \epsilon_n ~\left(\sum\limits_{j=1}^{L_n}\lambda_j^2\right)^{\frac{1}{2}}\leq \dfrac{2}{n} \longrightarrow 0 \quad	\text{as} \quad n\longrightarrow \infty.
	\end{align*}
	Similarly we have
	\begin{align*}
		&\left\|P_{a_n}^\perp A^* P_{a_n}\right\|_2< \dfrac{1}{n} + \epsilon_n ~\left(\sum\limits_{j=1}^{L_n}\lambda_j^2\right)^{\frac{1}{2}}\leq \dfrac{2}{n} \longrightarrow 0,\\
		& \left\|P_{a_n}^\perp B P_{a_n}\right\|_2< \dfrac{1}{n} + \epsilon_n ~\left(\sum\limits_{j=1}^{L_n}\mu_j^2\right)^{\frac{1}{2}}\leq \dfrac{2}{n} \longrightarrow 0, \\
		& \text{and}\quad \left\|P_{a_n}^\perp B^* P_{a_n}\right\|_2< \dfrac{1}{n} + \epsilon_n ~\left(\sum\limits_{j=1}^{L_n}\mu_j^2\right)^{\frac{1}{2}}\leq \dfrac{2}{n} \longrightarrow 0 \quad \text{as} \quad n\longrightarrow \infty.
	\end{align*}
	This complete the proof.
\end{proof}
\begin{lem}\label{appthm}
	Let $N_0$ be a normal contraction on a separable Hilbert space $\hil$, and let $V\in\hils$. Let $T_0=N_0+V$ and $A=A^*\in\hils$. Set $T_s=e^{isA}T_0,~s\in[0,1]$ and $T=T_1$. Then there exists a sequence $\{P_n\}$ of finite rank projections  such that for every $k\in \mathbb{N}$, each of the following terms
	\begin{align*}
		&(i)~~\|P_n^\perp N_0P_n\|_2,~~(ii)~~\|P_n^\perp V\|_2,~~(iii)~~\|P_n^\perp V^*\|_2,~~(iv)~~\left\|\left( T^{k}-T_{n}^{k} \right)P_n\right\|_2,\\
		&(v)~~\left\|\left( T_{0}^{k}-T_{0,n}^{k} \right)P_n\right\|_2,~~(vi)~~\left\|P_n^\perp\left( e^{iA}-I \right)\right\|_2,(vii)~~\left\|P_n\left( e^{iA}-e^{iA_n}\right)\right\|_2,\\
		&(viii)~~\left\|\left(e^{iA}-iA-e^{iA_n}+iA_n\right)P_n\right\|_1,\quad \text{and} \quad (ix)~~\left\|\left(e^{iA}-iA-I\right)P_n^\perp\right\|_1
	\end{align*}
	converges to zero as $n\longrightarrow \infty$, where $A_n=P_nAP_n$, $T_{0,n} =P_nT_0P_n$,  $T_n=e^{iA_n}T_{0,n}$ and $T_{s,n}=e^{isA_n}T_{0,n}$.
\end{lem}
\begin{proof}
	Since $N_0$ is a normal contraction, then $N_0+N_0^*$ and $N_0-N_0^*$ are two commuting self-adjoint operators on $\mathcal{H}$. Therefore by applying Lemma \ref{approxlm1} corresponding to the pair $(N_0+N_0^*, N_0-N_0^*)$, there exists a sequence $\{P_n\}$ of finite rank projections such that 
	\begin{align*}
		\left\|P_n^\perp(N_0+N_0^*)P_n\right\|_2, \left\|P_n^\perp(N_0-N_0^*)P_n\right\|_2, \left\|P_n^\perp A\right\|_2, \left\|P_n^\perp V\right\|_2, \left\|P_n^\perp V^*\right\|_2\longrightarrow 0 \text{  as }n\longrightarrow \infty,
	\end{align*} which immediately implies that  
	\begin{align}\label{eqp}
		\left\|P_n^\perp N_0 P_n\right\|_2, \left\|P_n^\perp N_0^* P_n\right\|_2,\left\|P_n^\perp T_0 P_n\right\|_2, \left\|P_n^\perp T_0^* P_n\right\|_2 \longrightarrow 0 \text{  as }n\longrightarrow \infty.
	\end{align}
	This conclude $(i),~(ii)$ and $(iii)$. The proof of $(iv),~(v),~(vi),~(vii)$ and $(ix)$ can be obtained similarly by mimicking the proof of Lemma~3.4 given in \cite[Lemma~3.4]{CDP}. For $(viii)$, consider the following
	\begin{align*}
		&\left\|\left(e^{iA}-iA-e^{iA_n}+iA_n\right)P_n\right\|_1\\
		=&\left\|\sum_{k=2}^{\infty}\dfrac{1}{k!}~\sum_{j=0}^{k-1}\left\{(iA)^{k-j-1}(iA-iA_n)(iA_n)^j\right\}P_n\right\|_1\\
		=&\left\|\sum_{k=2}^{\infty}\dfrac{1}{k!}~\left[\sum_{j=0}^{k-1}\left\{(iA)^{k-j-1}(iA-iA_n)(iA_n)^j\right\}P_n\right]\right\|_1\\
		=&\Big\|\sum_{k=2}^{\infty}\dfrac{1}{k!}~\Big[(iA)^{k-1}i\left(P_n^\perp AP_n\right)+i\left(P_n^\perp AP_n\right)(iA_n)^{k-1}\\
		&+\sum_{j=1}^{k-2}\left\{(iA)^{k-j-1}i\left(P_n^\perp AP_n\right)(iA_n)^j\right\}P_n\Big]\Big\|_1\\
		=&\sum_{k=2}^{\infty}\dfrac{1}{k!}~\Big[\|A\|^{k-2}\|A\|_2\left\|P_n^\perp AP_n\right\|_2+\left\|P_n^\perp AP_n\right\|_2\|A_n\|^{k-2}\|A_n\|_2\\
		&\hspace{4cm}+\sum_{j=1}^{k-2}\left\{\|A\|^{k-j-1}\left\|P_n^\perp AP_n\right\|_2\|A_n\|^{j-1}\|A_n\|_2\right\}\Big]\\
		=&\left(\sum_{k=2}^{\infty}\dfrac{1}{(k-1)!}~ \|A\|^{k-2}\right)\|A\|_2\left\|P_n^\perp AP_n\right\|_2\\
		=&\|A\|^{-1}\left(e^{\|A\|}-1\right)\|A\|_2\left\|P_n^\perp AP_n\right\|_2 \longrightarrow 0 \text{ as } n\longrightarrow \infty.
	\end{align*}
	This completes the proof. 
\end{proof}
\begin{rem}\label{adj}
	Note that the expressions $(iv)$ and $(v)$ in Lemma~\ref{appthm} also converge to zero as $n\longrightarrow \infty$ for any $k\in \mathbb{Z}$, where we interpret $T^{-1}$ as the adjoint of $T$.
\end{rem}
The following two theorems show how the above Lemma~\ref{appthm} can be used to reduce the relevant problem into a
finite-dimensional one.
\begin{thm}\label{th3(1)}
	Let $N_0$ be a normal contraction on a separable Hilbert space $\hil$, and let $V\in\hils$. Let $N_s=N_0+sV,~s\in[0,1],~N=N_1$ and let $p(\cdot)$ be any complex polynomial. Then there exists a sequence $\{P_n\}$ of finite rank projections  such that 
	\begin{align}\label{eq18}
		\left\|\Bigg\{p(N)-p(N_0)-\at{\dds}{s=0}\big\{p(N_s)\big\} \Bigg\}-\Bigg\{P_n\left(p(N_n)-p(N_{0,n})-\at{\dds}{s=0}\big\{p(N_{s,n})\big\}\right) P_n \Bigg\}\right\|_1
	\end{align}
	$\longrightarrow 0$ as $n\longrightarrow \infty$, where $N_n=P_nNP_n,~N_{0,n}=P_nN_0P_n,$ and $N_{s,n}=P_nN_sP_n$.
\end{thm}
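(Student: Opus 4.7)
The plan is to reduce, by linearity, to $p(z) = z^r$ with $r \geq 2$ (for $r = 0, 1$ both sides vanish), and then use the finite-rank projections $\{P_n\}$ from Lemma~\ref{appthm} (applied with $A = 0$), which satisfy $P_n \uparrow I$ and
$$\|P_n^\perp N_0 P_n\|_2,\; \|P_n^\perp N_0^* P_n\|_2,\; \|P_n^\perp V\|_2,\; \|P_n^\perp V^*\|_2 \longrightarrow 0.$$
The $N_0^*$ estimate follows from normality of $N_0$ via the Cartesian decomposition used in Lemma~\ref{approxlm1}.

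Using Lemma~\ref{th1}, $\at{\dds}{s=0}\big\{p(N_s)\big\} = \sum_{j=0}^{r-1} N_0^{r-j-1} V N_0^j$. Two successive telescopings,
$$N^r - N_0^r = \sum_{j=0}^{r-1} N^{r-j-1} V N_0^j,\qquad N^{r-j-1} - N_0^{r-j-1} = \sum_{k=0}^{r-j-2} N^{r-j-k-2} V N_0^k,$$
give
$$p(N) - p(N_0) - \at{\dds}{s=0}\big\{p(N_s)\big\} = \sum_{j=0}^{r-2}\sum_{k=0}^{r-j-2} N^{r-j-k-2}\, V\, N_0^k\, V\, N_0^j,$$
and the identical formula holds in finite dimensions with $N, N_0, V$ replaced by $N_n, N_{0,n}, V_n := P_n V P_n = N_n - N_{0,n}$. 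Because $N_n, N_{0,n}, V_n$ all absorb $P_n$, the outer sandwich $P_n(\cdot)P_n$ in the theorem is automatic. Each summand on either side carries two Hilbert--Schmidt factors and hence lies in $\boh$; the proof therefore reduces to showing, for every triple $(a,b,c)$ with $a+b+c = r-2$,
$$\|N^a V N_0^b V N_0^c - N_n^a V_n N_{0,n}^b V_n N_{0,n}^c\|_1 \longrightarrow 0.$$

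I would establish this by telescoping, replacing one factor at a time, relying on (a) $\|V - V_n\|_2 \to 0$ (immediate from $V \in \hils$ and $P_n \uparrow I$), and (b) $\|(N^k - N_n^k)P_n\|_2, \|(N_0^k - N_{0,n}^k)P_n\|_2 \to 0$ for every $k$, together with their adjoint counterparts. The estimates in (b) follow from the decomposition $(N^k - N_n^k)P_n = P_n^\perp N^k P_n + (P_n N^k P_n - N_n^k)$ via short inductive expansions that bound each piece by $k\|N\|^{k-1} \|P_n^\perp N P_n\|_2$, where $\|P_n^\perp N P_n\|_2 \leq \|P_n^\perp N_0 P_n\|_2 + \|P_n^\perp V\|_2 \to 0$; the $N_0$-version is analogous.

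The main technical obstacle is keeping every intermediate estimate in the trace class. A naive splitting $N^a - N_n^a = (N^a - N_n^a)P_n + N^a P_n^\perp$ produces a residual $N^a P_n^\perp$ that is neither small in operator norm nor in any Schatten class. The smallness must be extracted by pairing $P_n^\perp$ with an adjacent Hilbert--Schmidt factor $V$ (using $\|P_n^\perp V\|_2 \to 0$) or with an adjacent $N_0$-factor (using $\|P_n^\perp N_0 P_n\|_2 \to 0$), rather than attempting an operator-norm bound. With this book-keeping, every summand in the telescope retains two Hilbert--Schmidt factors, one carrying a vanishing norm, and $\|AB\|_1 \leq \|A\|_2\|B\|_2$ together with $\|N\|, \|N_0\| \leq 1$ yields the convergence of each term to zero, completing the proof.
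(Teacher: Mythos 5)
Your proposal is correct and follows essentially the same route as the paper: reduction to monomials $z^r$, the finite-rank projections and Hilbert--Schmidt estimates of Lemma~\ref{appthm} (with the $N_0^*$-estimates supplied by normality), and a telescoping in which every term retains two Hilbert--Schmidt factors, one with vanishing norm, so that $\|AB\|_1\le\|A\|_2\|B\|_2$ gives trace-norm convergence. The only immaterial difference is bookkeeping: you telescope twice and then replace factors one at a time, whereas the paper telescopes once, writes the difference as $\sum_{\alpha+\beta=r-1}\big[(N^{\alpha}-N_0^{\alpha})VN_0^{\beta}-P_n\big((N_n^{\alpha}-N_{0,n}^{\alpha})VN_{0,n}^{\beta}\big)P_n\big]$, and inserts $P_n$, $P_n^{\perp}$ directly into each term.
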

\begin{proof}
	It will be sufficient to prove the theorem for $p(z)=z^r,r\in\mathbb{N}, z\in\cir$. Note that for $r=0$ or $1$, the expressions inside the trace norm in \eqref{eq18} are identically zero. Let $r\geq 2$. Now by using the sequence $\{P_n\}$ of finite rank projections as obtained in Lemma~\ref{appthm}, and using an expression similar to \eqref{exp1} in $\mathcal{B}(\mathcal{H})$, we have that
	\begin{align}\label{eq19}
		\nonumber&~\left\|\Big(N^r-N_0^r- \sum_{j=0}^{r-1} N_{0}^{r-j-1}VN_0^j\Big)- P_n\Big(N_n^r-N_{0,n}^r- \sum_{j=0}^{r-1} N_{0,n}^{r-j-1}VN_{0,n}^j\Big)P_n\right\|_1\\
		\nonumber=&~\Bigg\|\sum_{\substack{\alpha+\beta=r-1\\\alpha\geq 1 ~\&~ \beta\geq 0}}\Big[ (N^{\alpha}-N_{0}^{\alpha})VN_0^\beta- P_n\Big( (N_{n}^{\alpha}-N_{0,n}^{\alpha})VN_{0,n}^\beta\Big)P_n\Big]\Bigg\|_1\\
		\nonumber=&~\Bigg\|\sum_{\substack{\alpha+\beta=r-1\\\alpha\geq 1 ~\&~ \beta\geq 0}}\Bigg[ \Big\{(N^{\alpha}-N_{n}^{\alpha})P_nVN_0^\beta + N^{\alpha}P_n^\perp VN_0^\beta+ N_n^{\alpha} VP_n (N_0^\beta-N_{0,n}^\beta) + N_n^{\alpha} VP_n^\perp N_0^\beta\Big\}\\
		\nonumber&~ -\Big\{ (N_0^{\alpha}-N_{0,n}^{\alpha})P_nVN_0^\beta + N_{0}^{\alpha}P_n^\perp VN_0^\beta+ N_{0,n}^{\alpha} VP_n (N_0^\beta-N_{0,n}^\beta) + N_{0,n}^{\alpha} VP_n^\perp N_0^\beta \Big\}\Bigg]\Bigg\|_1\\
		\nonumber \leq&~\sum_{\substack{\alpha+\beta=r-1\\\alpha\geq 1 ~\&~ \beta\geq 0}}  \Big\{ \|(N^{\alpha}-N_{n}^{\alpha})P_n\|_2 \|V\|_2 + \alpha\|V\|_2~ (\|P_n^\perp V\|_2+\|V P_n^\perp \|_2)\\
		&\hspace{.5in}+ 2\|V\|_2 \|P_n (N_0^\beta-N_{0,n}^\beta)\|_2
		+\|(N_0^{\alpha}-N_{0,n}^{\alpha})P_n\|_2 \|V\|_2\Big\}. 
	\end{align}
	Now using the estimates listed in  Lemma~\ref{appthm} along with the Remark~\ref{adj}, we conclude that the expression in the right hand side of \eqref{eq19} converges to zero as $n\longrightarrow \infty$. This completes the proof.
	
\end{proof}

\begin{thm}\label{th3(2)}
	Let $T_0=N_0+V$ be a contraction on a separable Hilbert space $\hil$, where $N_0$ is a bounded normal operator and $V\in\hils$. Let $A=A^*\in\hils$, $T_s=e^{isA}T_0, ~s\in[0,1],$~$T=T_1$ and $p(\cdot)$ be any trigonometric polynomial on $\cir$ with complex coefficients. Then there exists a sequence  $\{P_n\}$ of finite rank projections in $\mathcal{H}$ such that
	\begin{align}\label{eqapp}
		\left\|\left[\Big\{p(T)-p(T_0)-\at{\dds}{s=0}p(T_s) \Big\}-P_n\Big\{p(T_n)-p(T_{0,n})-\at{\dds}{s=0}p(T_{s,n}) \Big\}P_n\right]\right\|_1
	\end{align}
	$\longrightarrow 0 \text{ as } n\to\infty,$ where $A_n=P_nAP_n$, $T_{0,n} =P_nT_0P_n$,  $T_n=e^{iA_n}T_{0,n}$ and $T_{s,n}=e^{isA_n}T_{0,n}$.
\end{thm}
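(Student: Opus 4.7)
The plan is to follow the architecture of Theorem~\ref{th3(1)}, with additional bookkeeping tailored to the multiplicative path $T_s=e^{isA}T_0$. By linearity it suffices to verify \eqref{eqapp} for monomials $p(z)=z^r$, $r\in\mathbb{Z}$. The case $r=0$ is trivial. For $r\le -1$, the formula from Lemma~\ref{multifinitelemma} only involves the adjoints $T^{*}$, $T_0^{*}$, $A^{*}=A$, and the argument is identical to the $r\ge 1$ case once we invoke Remark~\ref{adj}. So the substance lies in the case $r\ge 1$.

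Fix $r\ge 1$ and let $\{P_n\}$ be the sequence of finite-rank projections supplied by Lemma~\ref{appthm}. Using $T-T_0=(e^{iA}-I)T_0$ and the telescoping identity
\[
T^{r}-T_{0}^{r}=\sum_{j=0}^{r-1}T^{r-j-1}\,(e^{iA}-I)\,T_{0}^{j+1},
\]
together with the derivative expression in Lemma~\ref{multifinitelemma}, I would decompose
\begin{align*}
T^{r}-T_{0}^{r}-\sum_{j=0}^{r-1}T_{0}^{r-j-1}(iA)T_{0}^{j+1}
&=\sum_{j=0}^{r-1}T^{r-j-1}\bigl(e^{iA}-I-iA\bigr)T_{0}^{j+1}\\
&\quad +\sum_{j=0}^{r-1}\bigl(T^{r-j-1}-T_{0}^{r-j-1}\bigr)(iA)\,T_{0}^{j+1},
\end{align*}
and write the analogous identity on the compressed side with $T$, $T_0$, $A$ replaced by $T_n$, $T_{0,n}$, $A_n$. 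The first family of summands is trace class thanks to the power-series estimate behind Lemma~\ref{appthm}(ix) (which in particular says $e^{iA}-I-iA\in\boh$); the second family is trace class because $T^{k}-T_{0}^{k}\in\hils$ and $A\in\hils$.

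Next I would estimate the trace-norm difference of these two identities term by term, mirroring the bookkeeping in \eqref{eq19}. For each summand, insert $I=P_n+P_n^{\perp}$ at the appropriate slots to isolate an ``interior'' piece of the form $P_n(\cdots)P_n$ matching the corresponding summand on the compressed side, plus several ``boundary'' pieces that each carry one of the small factors $P_n^{\perp}N_0P_n$, $P_n^{\perp}V$, $P_n^{\perp}V^{*}$, $(T^{k}-T_{n}^{k})P_n$, $(T_{0}^{k}-T_{0,n}^{k})P_n$, $P_n(e^{iA}-e^{iA_n})$, $(e^{iA}-iA-e^{iA_n}+iA_n)P_n$ or $(e^{iA}-iA-I)P_n^{\perp}$ furnished by Lemma~\ref{appthm}. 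H\"older's inequality for Schatten ideals ($\|XY\|_1\le\|X\|_2\|Y\|_2$ and $\|XY\|_1\le\|X\|_1\|Y\|$) then dominates the trace norm of each boundary piece by one of the null sequences in Lemma~\ref{appthm}(i)--(ix) multiplied by a uniformly bounded factor, so every boundary term vanishes as $n\to\infty$ and only the interior pieces survive, producing exactly $P_n E_{r,n} P_n$.

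The main obstacle will be the nonlinearity of the path $s\mapsto e^{isA}T_0$: unlike the linear case, $T-T_0=(e^{iA}-I)T_0$ is only Hilbert--Schmidt, so one cannot extract the first-order term in $A$ without the algebraic splitting above, which separates the genuinely trace-class remainder $e^{iA}-I-iA$ from the Hilbert--Schmidt $\times$ Hilbert--Schmidt factor $(T^{k}-T_{0}^{k})(iA)$. This is precisely why clauses (viii)--(ix) of Lemma~\ref{appthm} (which have no analogue in the linear-path proof of Theorem~\ref{th3(1)}) are indispensable, and why the auxiliary compression estimate (vii) for $e^{iA}-e^{iA_n}$ is needed here to match the two exponential factors across the truncation. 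Once those three ingredients are combined with the splitting above, the finite-dimensional reduction reduces to a direct adaptation of the bookkeeping in \eqref{eq19}.
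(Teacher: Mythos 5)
Your proposal is correct and follows essentially the same route as the paper: reduce to monomials, telescope via $T^{r}-T_{0}^{r}=\sum_{j}T^{r-j-1}(e^{iA}-I)T_{0}^{j+1}$, split off the trace-class remainder $e^{iA}-I-iA$ from Hilbert--Schmidt$\times$Hilbert--Schmidt terms, and absorb all boundary pieces created by inserting $P_n+P_n^{\perp}$ into the null sequences of Lemma~\ref{appthm} via H\"older, exactly as in the bookkeeping of \eqref{eq19}. The only (cosmetic) difference is the grouping: the paper writes each summand as $(T^{\alpha}-T_{0}^{\alpha})(e^{iA}-I)T_{0}^{\beta}+T_{0}^{\alpha}(e^{iA}-iA-I)T_{0}^{\beta}$ rather than your $T^{\alpha}(e^{iA}-I-iA)T_{0}^{\beta}+(T^{\alpha}-T_{0}^{\alpha})(iA)T_{0}^{\beta}$, which changes nothing of substance (your variant just additionally uses $\|P_n^{\perp}AP_n\|_2\to 0$, available from Lemma~\ref{approxlm1}).
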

\begin{proof}
	It will be  sufficient to prove the theorem for $p(z)=z^r,r\in\mathbb{Z}, z\in\cir$. Note that for $r=0$, the expression inside the trace norm in \eqref{eqapp} is identically zero. For $r\geq 1$, using the sequence $\{P_n\}$ of finite rank projections as obtained in Lemma~\ref{appthm}, and using an expression similar to \eqref{conpathexp} in $\mathcal{B}(\mathcal{H})$, we have that
	\begin{align*}
		\nonumber&\left[\Big\{p(T)-p(T_0)-\at{\dds}{s=0}p(T_s)\Big\}- P_n\Big\{p(T_n)-p(T_{0,n})-\at{\dds}{s=0}p(T_{s,n})\Big\}P_n\right]\\
		\nonumber=&\left[\Big\{T^r-T_0^r- \sum_{j=0}^{r-1} T_{0}^{r-j-1}(iA)T_0^{j+1}\Big\}- P_n\Big\{T_n^r-T_{0,n}^r- \sum_{j=0}^{r-1} T_{0,n}^{r-j-1}(iA_n)T_{0,n}^{j+1}\Big\}P_n\right]\\
		\nonumber=&\sum_{\substack{\alpha+\beta=r\\\alpha\geq 0 ~\&~ \beta\geq 1}}\Bigg[ \Big\{T^{\alpha}(e^{iA}-I)T_0^{\beta}-T_{0}^{\alpha}(iA)T_0^{\beta}\Big\}
		-P_n \Big\{T_{n}^{\alpha}P_n(e^{iA_n}-I)P_nT_{0,n}^{\beta}- T_{0,n}^{\alpha}(iA_n)T_{0,n}^{\beta}\Big\}P_n\Bigg]\\
		\nonumber=&\sum_{\substack{\alpha+\beta=r\\\alpha\geq 0 ~\&~ \beta\geq 1}}\Bigg[ \Big\{\big(T^{\alpha}-T^{\alpha}_0\big)(e^{iA}-I)T_0^{\beta}+T_{0}^{\alpha}(e^{iA}-iA-I)T_0^{\beta}\Big\}\\
		\nonumber&\hspace{1in} -P_n \Big\{\left(T_{n}^{\alpha}-T_{0,n}^{\alpha}\right)P_n(e^{iA_n}-I)P_nT_{0,n}^{\beta}+ T_{0,n}^{\alpha}(e^{iA_n}-iA_n-I)T_{0,n}^{\beta}\Big\}P_n\Bigg]\\
		\nonumber=&\sum_{\substack{\alpha+\beta=r\\\alpha\geq 0 ~\&~ \beta\geq 1}}\Bigg[\Bigg\{\big( T^{\alpha}-T_{n}^{\alpha}- T_{0}^{\alpha}+T_{0,n}^{\alpha} \big)P_n(e^{iA}-I)T_0^{\beta}+ \big(T^{\alpha}-T_{0}^{\alpha}\big)P_n^\perp(e^{iA}-I)T_0^{\beta}\\
		\nonumber&\hspace{1cm}+\left(T_{n}^{\alpha}-T_{0,n}^{\alpha}\right)P_n\left( e^{iA}-e^{iA_n}\right)T_0^{\beta}+\left(T_{n}^{\alpha}-T_{0,n}^{\alpha}\right)P_n\left( e^{iA_n}-I\right)P_n\left(T_0^{\beta}-T_{0,n}^{\beta}\right)\Bigg\}\\
		\nonumber&-\Bigg\{(T_0^\alpha-T_{0,n}^{\alpha})P_n\left( e^{iA}-iA-I \right)T_0^{\beta}+ T_0^\alpha P_n^\perp \left( e^{iA}-iA-I \right)T_0^{\beta}\\
		&\hspace{1cm}+ T_{0,n}^\alpha P_n\left(e^{iA}-iA-e^{iA_n}+iA_n\right)T_0^{\beta}
		+T_{0,n}^\alpha P_n(e^{iA_n}-iA_n-I)P_n\left(T_{0}^{\beta}-T_{0,n}^{\beta}\right)\Bigg\} \Bigg]
	\end{align*}
	and hence we have the following estimate
	\begin{align}\label{eqapp3}
		\nonumber&\left\| \left[\Big\{p(T)-p(T_0)-\at{\dds}{s=0}p(T_s)\Big\}- P_n\Big\{p(T_n)-p(T_{0,n})-\at{\dds}{s=0}p(T_{s,n})\Big\}P_n\right]\right\|_1\\
		\nonumber&\leq ~~\sum_{\substack{\alpha+\beta=r\\\alpha\geq 0 ~\&~ \beta\geq 1}}\Bigg[\left\|\big( T^{\alpha}-T_{n}^{\alpha}- T_{0}^{\alpha}-T_{0,n}^{\alpha} \big)P_n\right\|_2 \left\|(e^{iA}-I)\right\|_2
		+ \left\|\big(T^{\alpha}-T_{0}^{\alpha}\big)\right\|_2 \left\|P_n^\perp (e^{iA}-I)\right\|_2\\ \nonumber&+\left\|\left(T_{n}^{\alpha}-T_{0,n}^{\alpha}\right)P_n\right\|_2 \left\|P_n\left( e^{iA}-e^{iA_n}\right)\right\|_2 + 2\left\|P_n\left( e^{iA_n}-I\right)P_n\right\|_2 \left\|P_n \left(T_0^{\beta}-T_{0,n}^{\beta}\right)\right\|_2 \\
		\nonumber& +\left\|(T_0^\alpha-T_{0,n}^{\alpha})P_n\right\|_2\left\|\left( e^{iA}-iA-I \right)\right\|_2 + \left\| P_n^\perp \left( e^{iA}-iA-I \right)\right\|_1\\
		& +\left\| P_n\left(e^{iA}-iA-e^{iA_n}+iA_n\right)\right\|_1
		+\left\| P_n(e^{iA_n}-iA_n-I)P_n\right\|_2 \left\|P_n\left(T_{0}^{\beta}-T_{0,n}^{\beta}\right)\right\|_2\Bigg].
	\end{align}
	Finally, using the estimates listed in  Lemma~\ref{appthm}, we conclude that each term in the right hand side of \eqref{eqapp3} converges to zero as $n\longrightarrow \infty$. By repeating the similar calculations as above and using  Lemma~\ref{appthm} and Remark \ref{adj} we conclude \eqref{eqapp} for $p(z)=z^r,r\leq -1$. This completes the proof. 
\end{proof}

\begin{rem}
	Note that, in the above Theorem~\ref{th3(2)} we prove the convergence of the expression in \eqref{eqapp} in trace norm instead of taking the trace of the expression and show the convergence. In other words, the above Theorem~\ref{th3(2)} deals with the trace norm convergence which is stronger in comparison with the trace convergence as obtained in \cite[Theorem 3.7]{CDP} and also we are dealing with pair of  contractions $(T,T_0)$ instead of pair unitaries $(U,U_0)$.  
\end{rem}

\section{\bf Existence of shift function in linear path}
Now we are in a position to derive the trace formula corresponding to the pair of contractions $(N,N_0)$, where $N_0$ is a normal operator. The following theorem is one of the main results in this section.
\begin{thm}\label{th4}
	Let $N$ and $N_0$ be two contraction operators in an infinite dimensional separable Hilbert space $\hil$ such that $N_0$ is normal and $V=N-N_0\in\hils$. Denote $N_s=N_0+sV, ~s\in[0,1]$. Then for any complex polynomial  $p(\cdot)$,
	$\Big\{p(N)-p(N_0)-\at{\dds}{s=0}\big\{p(N_s)\big\}\Big\}\in\boh $
	and there exists an $L^1(\mathbb{T})$-function $\eta$  (unique up to an analytic term) such that
	\begin{align}\label{eq20}
		\operatorname{Tr}\Bigg\{p(N)-p(N_0)-\at{\dds}{s=0} \big\{p(N_s)\big\} \Bigg\}=\int_{\cir}
		p''(z)\eta(z)dz.
	\end{align}
	Moreover, for every given $\epsilon >0$, we choose the function $\eta$ satisfying \eqref{eq20} in such a way so that 
	\begin{equation}\label{eq40}
		\|\eta\|_{L^1(\mathbb{T})}\leq \left(\frac{1}{2}+\epsilon\right) ~\|V\|_2^2. 
	\end{equation}
\end{thm}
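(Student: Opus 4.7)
The plan is to derive Theorem~\ref{th4} from the finite-dimensional trace formula of Theorem~\ref{th2(1)} via the approximation theorem~\ref{th3(1)}, and then to pass to the limit of the resulting finite-dimensional shift functions.

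\textbf{Step 1 (reduction and finite-dimensional formula).} I first invoke Theorem~\ref{th3(1)} to produce a sequence $\{P_n\}$ of finite-rank projections such that, setting $N_n=P_nNP_n$, $N_{0,n}=P_nN_0P_n$, $N_{s,n}=P_nN_sP_n$ and $V_n=P_nVP_n$,
\[
\Bnorm{\Big(p(N)-p(N_0)-\at{\dds}{s=0}p(N_s)\Big)-P_n\Big(p(N_n)-p(N_{0,n})-\at{\dds}{s=0}p(N_{s,n})\Big)P_n}_{1}\longrightarrow 0
\]
as $n\to\infty$. Writing $X:=p(N)-p(N_0)-\at{\dds}{s=0}p(N_s)$ and $X_n:=p(N_n)-p(N_{0,n})-\at{\dds}{s=0}p(N_{s,n})$, this gives $X\in\boh$ and $\operatorname{Tr}(X)=\lim_n \operatorname{Tr}(X_n)$. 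Since $(N_n,N_{0,n})$ is a pair of contractions on the finite-dimensional space $P_n\hil$, Theorem~\ref{th2(1)} furnishes $\eta_n\in L^1(\cir)$ with
\[
\operatorname{Tr}(X_n)=\int_\cir p''(z)\,\eta_n(z)\,dz,\qquad \|[\eta_n]\|_{L^1(\cir)/H^1(\cir)}\le \tfrac{1}{2}\|V_n\|_2^2\le \tfrac{1}{2}\|V\|_2^2.
\]
Combining these yields $\operatorname{Tr}(X)=\lim_n \int_\cir p''\,\eta_n\,dz$ for every complex polynomial $p$.

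\textbf{Step 2 (extraction of a limit shift function).} The sequence $\{[\eta_n]\}$ is uniformly bounded in $L^1(\cir)/H^1(\cir)$. A diagonal extraction over the countable family of monomials $\{z^j:j\ge 0\}$ yields a subsequence (still indexed by $n$) along which $\int_\cir g\,\eta_n\,dz$ converges for every polynomial $g\in H^\infty(\cir)$. The limit $L(g):=\lim_n\int g\,\eta_n\,dz$ defines a linear functional on $\mathcal{P}(\cir)$ satisfying $|L(g)|\le \tfrac{1}{2}\|V\|_2^2\|g\|_{H^\infty}$. Lemma~\ref{supnorm}, together with the uniform $L^1/H^1$ estimate on $\{[\eta_n]\}$, produces a class $[\eta]\in L^1(\cir)/H^1(\cir)$ of norm at most $\tfrac{1}{2}\|V\|_2^2$ realizing the limit in the form $L(g)=\int_\cir g\eta\,dz$; choosing $g=p''$ then delivers \eqref{eq20}.

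\textbf{Step 3 ($L^1$-norm bound and uniqueness).} Given $\epsilon>0$, the definition of the quotient norm allows me to pick a representative $\eta\in L^1(\cir)$ of the class with $\|\eta\|_{L^1}\le \|[\eta]\|_{L^1/H^1}+\epsilon\|V\|_2^2\le (\tfrac{1}{2}+\epsilon)\|V\|_2^2$, establishing \eqref{eq40}. Uniqueness modulo an analytic term then follows from Lemma~\ref{supnorm}: two $L^1$-solutions $\eta_1,\eta_2$ of \eqref{eq20} differ by an $L^1$-function that pairs trivially with every polynomial, so $[\eta_1]=[\eta_2]$ in $L^1/H^1$.

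\textbf{Main obstacle.} The delicate technical point is Step 2, namely identifying the weak limit of $\{[\eta_n]\}$ with an actual element of $L^1/H^1$ rather than with a more general functional on the disk algebra (whose representing measure could carry a singular component). The quantitative $L^1/H^1$-bound provided by Theorem~\ref{th2(1)} is indispensable here: it is the interaction of this uniform estimate with Lemma~\ref{supnorm} that prevents the limit from escaping $L^1/H^1$ into the strictly larger dual space of the disk algebra, and it is precisely this feature of our finite-dimensional construction that makes the argument available.
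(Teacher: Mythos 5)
Your Steps 1 and 3 follow the paper, but Step 2 contains a genuine gap, and it sits exactly at the point you yourself flag as the ``main obstacle.'' From the uniform bound $\|[\eta_n]\|_{L^1(\mathbb{T})/H^1(\mathbb{T})}\le \tfrac12\|V\|_2^2$ and convergence of the pairings $\int_{\mathbb{T}} z^j\eta_n(z)\,dz$ along a diagonal subsequence you obtain a linear functional $L$ on $\mathcal{P}(\mathbb{T})$ with $|L(g)|\le \tfrac12\|V\|_2^2\|g\|_\infty$; but nothing in this data forces $L$ to be represented by an $L^1$-function. Since $L^1(\mathbb{T})/H^1(\mathbb{T})$ is not reflexive, a norm-bounded sequence need not have a weakly convergent subsequence: by Hahn--Banach your $L$ extends to the disc algebra and is represented by a measure, which may be singular. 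Concretely, if $\eta_n$ is an approximate identity concentrating at $1\in\mathbb{T}$, then $\|\eta_n\|_{L^1}\le C$ and $\int_{\mathbb{T}} z^j\eta_n(z)\,dz$ converges for every $j\ge 0$, yet the limits cannot equal $\int_{\mathbb{T}} z^j\eta(z)\,dz$ for any $\eta\in L^1(\mathbb{T})$ (Riemann--Lebesgue). Invoking Lemma~\ref{supnorm} does not repair this: that lemma computes the quotient norm of a function already known to lie in $L^1(\mathbb{T})$, it does not produce one. Even weak sequential completeness of $L^1/H^1$ (Mooney's theorem) is not directly available to you, because your diagonal extraction gives convergence of the pairings only against polynomials, not against all of $H^\infty(\mathbb{T})$.

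The paper closes this gap differently: rather than a compactness/weak-limit argument, it proves that $\{[\eta_n]\}$ is a Cauchy sequence in the norm of $L^1(\mathbb{T})/H^1(\mathbb{T})$. This uses the explicit formula \eqref{eq30} for $\eta_n$ in terms of the semi-spectral measures of $N_{s,n}$ and $N_{0,n}$: for a polynomial $f$ with primitive $g(e^{it})=\int_0^t f(e^{is})ie^{is}ds$, Fubini and integration by parts give
\begin{equation*}
\int_{\mathbb{T}} f(z)\{\eta_n(z)-\eta_m(z)\}\,dz=\int_0^1 \operatorname{Tr}\Big[V_n\{g(N_{s,n})-g(N_{0,n})\}-V_m\{g(N_{s,m})-g(N_{0,m})\}\Big]\,ds,
\end{equation*}
which is then estimated by $K_{m,n}\,\|f\|_\infty\,\|V\|_2$ with $K_{m,n}\to 0$, using Theorem~\ref{estimatethm} together with the projection estimates of Lemma~\ref{appthm}. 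Lemma~\ref{supnorm} converts this into $\|[\eta_n]-[\eta_m]\|_{L^1(\mathbb{T})/H^1(\mathbb{T})}\to 0$, and norm completeness of the quotient then supplies the limit class $[\eta]$, after which your Step 3 goes through verbatim. To make your outline work you must replace the extraction argument of Step 2 by such a quantitative Cauchy estimate (or an equivalent device); as written, the limit functional can escape $L^1(\mathbb{T})/H^1(\mathbb{T})$.
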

\begin{proof}
	By Theorems \ref{th3(1)} and \ref{th2(1)}, we have that 
	\begin{align}\label{eq34}
		\nonumber &\operatorname{Tr}\Bigg\{p(N)-p(N_0)-\at{\dds}{s=0}\big\{p(N_s)\big\} \Bigg\}\\
		\nonumber&=\lim_{n\to \infty}\operatorname{Tr}~\Bigg\{P_n\Bigg(p(N_n)-p(N_{0,n})-\at{\dds}{s=0}\big\{p(N_{s,n})\big\} \Bigg)P_n\Bigg\}\\
		&=\lim_{n\to \infty}\int_{\cir} p''(z)\eta_n(z)dz,
	\end{align}
	$\text{where~} N_n=P_nNP_n,N_{0,n}=P_nN_0P_n,N_{s,n}=P_nN_sP_n$, and $\eta_n(z)$ is given by \eqref{aeq1}, that is 
	\begin{align}\label{eq30}
		\eta_n(z)=\int_{0}^{1}\operatorname{Tr} \Big[V_n\Big\{\mathcal{E}_{0,n}\Big(Arg(z)\Big)-\mathcal{E}_{s,n}\Big(Arg(z)\Big)\Big\}\Big]ds, ~z\in\cir,
	\end{align}
	where $\mathcal{E}_{s,n}(\cdot)$ and $\mathcal{E}_{0,n}(\cdot)$ are the semi-spectral measures corresponding to the contractions $N_{s,n}$ and $N_{0,n}$ respectively (see \eqref{spectralmeasurecon} and \eqref{funcalcon} in Section 2) and $V_n=P_nVP_n$. Moreover, from \eqref{eq35class} it follows that 
	\begin{equation}\label{eq36}
		\big\|[\eta_n]\big\|_{L^1(\mathbb{T})/H^1(\mathbb{T})}
		\leq \frac{1}{2} ~\|V_n\|_2^2.
	\end{equation}
	Next we show that the sequence $\{\eta_n\}$ converges in some suitable sense. Indeed, by following the idea contained in \cite{GePu, Ko, NH} (see also \cite{ChSi,CDP}), using the above expression \eqref{eq30} of $\eta_n$, using Fubini's theorem to interchange the orders of integration and integrating by-parts, we have for $f\in \mathcal{P}(\mathbb{T})$ and $g(e^{it})$ $=\int\limits_{0}^{t}f(e^{is})~ie^{is}ds,~t\in[0,2\pi]$ that
	\begin{align}\label{eq31}
		\nonumber &\int_{\cir} f(z)\big\{\eta_n(z)-\eta_m(z)\big\} dz
		=\int_{0}^{2\pi} f(e^{it})\big\{\eta_n(e^{it})-\eta_m(e^{it})\big\}~ie^{it}dt\\
		\nonumber &=\int_{0}^{2\pi}\ddt \big\{g(e^{it})\big\}\Bigg[\int_{0}^{1}\operatorname{Tr} \Big[V_n\Big\{\mathcal{E}_{0,n}(t)-\mathcal{E}_{s,n}(t)\Big\}-V_m\Big\{\mathcal{E}_{0,m}(t)-\mathcal{E}_{s,m}(t)\Big\}\Big]~ds\Bigg]~dt\\
		\nonumber &=\int_{0}^{1}\Bigg[\int_{0}^{2\pi}\ddt \big\{g(e^{it})\big\}~\operatorname{Tr} \Big[V_n\Big\{\mathcal{E}_{0,n}(t)-\mathcal{E}_{s,n}(t)\Big\}-V_m\Big\{\mathcal{E}_{0,m}(t)-\mathcal{E}_{s,m}(t)\Big\}\Big]dt\Bigg]ds\\
		\nonumber &=\int_{0}^{1}\Bigg[g(e^{it})~\operatorname{Tr} \Big[V_n\Big\{\mathcal{E}_{0,n}(t)-\mathcal{E}_{s,n}(t)\Big\}-V_m\Big\{\mathcal{E}_{0,m}(t)-\mathcal{E}_{s,m}(t)\Big\}\Big]\Bigg\lvert_{t=0}^{2\pi}\\
		\nonumber &\hspace*{1in}-\int_{0}^{2\pi}g(e^{it})~\operatorname{Tr} \Big[V_n\Big\{\mathcal{E}_{0,n}(dt)-\mathcal{E}_{s,n}(dt)\Big\}-V_m\Big\{\mathcal{E}_{0,m}(dt)-\mathcal{E}_{s,m}(dt)\Big\}\Big]\Bigg]ds\\
		\nonumber &=-\int_{0}^{1}ds\int_{0}^{2\pi}g(e^{it})~\operatorname{Tr} \Big[V_n\Big\{\mathcal{E}_{0,n}(dt)-\mathcal{E}_{s,n}(dt)\Big\}-V_m\Big\{\mathcal{E}_{0,m}(dt)-\mathcal{E}_{s,m}(dt)\Big\}\Big]\\
		\nonumber &=\int_{0}^{1}ds~\operatorname{Tr} \Big[V_n\Big\{g(N_{s,n})-g(N_{0,n})\Big\}-V_m\Big\{g(N_{s,m})-g(N_{0,m})\Big\}\Big]\\
		\nonumber &=\int_{0}^{1}ds~\operatorname{Tr} \Bigg[V_n\Bigg\{\Big\{g(N_{s,n})-g(N_{s})\Big\}-\Big\{g(N_{0,n})-g(N_{0})\Big\}\Bigg\}\\
		&\hspace*{.1in}-V_m\Bigg\{\Big\{g(N_{s,m})-g(N_{s})\Big\}-\Big\{g(N_{0,m})-g(N_{0})\Big\}\Bigg\}+(V_n-V_m)\Big\{g(N_{s})-g(N_{0})\Big\}\Bigg].
	\end{align}
	On the other hand using Theorem~\ref{estimatethm} and using  trace properties we obtain for $s\in[0,1]$ that 
	\begin{align}
		\nonumber & \Bigg|\operatorname{Tr} \Big[V_n\big\{g(N_{s,n})-g(N_{s})\big\}\Big]\Bigg| =\Bigg|\operatorname{Tr}
		\Big[V_nP_n\big\{g(N_{s})-g(N_{s,n})\big\}P_n\Big]\Bigg|\\
		\nonumber&\leq~ \norm{V_n}_2~\left\|P_n\Big\{g(N_{s})-g(N_{s,n})\Big\}P_n\right\|_2
		\leq~ \norm{V}_2~\left\|P_n\Big\{g(N_{s})P_n-P_n~g(N_{s,n})\Big\}P_n\right\|_2\\
		\nonumber &\leq~ \norm{V}_2~\Big\|g(N_{s}) P_n - P_n~g(N_{s,n})\Big\|_2
		\leq~\norm{f}_\infty\norm{V}_2~\big\|N_{s} P_n - P_nN_{s,n}\big\|_2\\
		\label{eq6} & = \norm{f}_\infty\norm{V}_2~\big\|P_n^{\perp}N_{s} P_n\big\|_2 ~\leq \norm{f}_\infty\norm{V}_2~\left(\big\|P_n^{\perp}N_{0} P_n\big\|_2 + \big\|P_n^{\perp}V\big\|_2\right).
	\end{align}
	Similarly, by repeating the above calculations we also obtain
	\begin{align}
		\nonumber &\Bigg|\operatorname{Tr} \Big[(V_n-V_m)\big\{g(N_{s})-g(N_{0})\big\}\Big]\Bigg|\leq~\norm{f}_\infty\norm{V}_2~\big\|V_n-V_m\big\|_2\\
		\label{eq7} & \leq ~\norm{f}_\infty\norm{V}_2~\left(\big\|P_n^{\perp}V\big\|_2 +\big\|VP_n^{\perp}\big\|_2 +\big\|P_m^{\perp}V\big\|_2 + \big\|VP_m^{\perp}\big\|_2\right).
	\end{align}
	Now combining \eqref{eq31}, \eqref{eq6} and \eqref{eq7} we get
	\begin{align}\label{eq32}
		\nonumber & \left|\int_{\cir} f(z)\big\{\eta_n(z)-\eta_m(z)\big\}dz\right| \\
		\nonumber & \leq \int_{0}^{1}ds~\Bigg|\operatorname{Tr} \Bigg[V_n\Bigg\{\Big\{g(N_{s,n})-g(N_{s})\Big\}-\Big\{g(N_{0,n})-g(N_{0})\Big\}\Bigg\}\\
		\nonumber &\hspace*{.1in}-V_m\Bigg\{\Big\{g(N_{s,m})-g(N_{s})\Big\}-\Big\{g(N_{0,m})-g(N_{0})\Big\}\Bigg\}+(V_n-V_m)\Big\{g(N_{s})-g(N_{0})\Big\}\Bigg]\Bigg|\\
		&\leq K_{m,n}~\norm{f}_\infty\norm{V}_2,
	\end{align}
	where
	\begin{align*}
		& K_{m,n}= ~\Big\{ 2\left(\big\|P_n^{\perp}N_{0} P_n\big\|_2 + \big\|P_n^{\perp}V\big\|_2\right) + 
		2\left(\big\|P_m^{\perp}N_{0} P_m\big\|_2 + \big\|P_m^{\perp}V\big\|_2\right) \\
		& \hspace{1in} + \left(\big\|P_n^{\perp}V\big\|_2 +\big\|VP_n^{\perp}\big\|_2 +\big\|P_m^{\perp}V\big\|_2 + \big\|VP_m^{\perp}\big\|_2\right)\Big\}.
	\end{align*}
	Therefore using Lemma~\ref{supnorm} and the above estimate \eqref{eq32}  we conclude
	\begin{align*}
		\big\|[\eta_n]-[\eta_m]\big\|_{L^1(\mathbb{T})/H^1(\mathbb{T})}=&\big\|[\eta_n-\eta_m]\big\|_{L^1(\mathbb{T})/H^1(\mathbb{T})}\\
		=&~\sup_{\substack{f\in \mathcal{P}(\cir);~\|f\|_{\infty}\leq 1}}\Bigg|\int_{\cir} f(z)\big\{\eta_n(z)-\eta_m(z)\big\}dz\Bigg|\\
		\leq~&~K_{m,n}~\|V\|_2\longrightarrow 0\quad \text{as}\quad m,n\longrightarrow \infty,
	\end{align*}
	by using Theorem \ref{appthm}
	and hence $\Big\{[\eta_n]\Big\}$ is a Cauchy sequence in $L^1(\mathbb{T})/H^1(\mathbb{T})$. Consequently, there exists a $\eta \in L^1(\mathbb{T})$ such that $\Big\{[\eta_n]\Big\}$ converges to $[\eta]$ in $L^1(\mathbb{T})/H^1(\mathbb{T})$-norm, that is
	\begin{equation*}
		\lim_{n\longrightarrow \infty}\big\|[\eta_n]-[\eta]\big\|_{L^1(\mathbb{T})/H^1(\mathbb{T})}=0,
	\end{equation*} 
	which in particular implies that
	\begin{align}\label{eq33}
		\lim_{n\longrightarrow \infty} \int_{\cir} p(z)\eta_n(z)dz = \int_{\cir} p(z)\eta(z)dz
	\end{align}
	for all complex polynomials $p(\cdot)$.
	Therefore combining \eqref{eq34} and \eqref{eq33} we get
	\begin{align*}
		\operatorname{Tr}\Big\{p(N)-p(N_0)-\at{\dds}{s=0}p(N_s) \Big\}=\lim_{n\to\infty}\int_{\cir} p''(z)\eta_n(z)dz
		=\int_{\cir} p''(z)\eta(z)dz.
	\end{align*}
	Furthermore, the equation \eqref{eq36} yields
	\begin{equation*}
		\big\|[\eta]\big\|_{L^1(\mathbb{T})/H^1(\mathbb{T})}
		\leq \frac{1}{2} ~\|V\|_2^2,
	\end{equation*}
	which by applying the definition of the $L^1(\mathbb{T})/H^1(\mathbb{T})$- norm, for every $\epsilon>0$, there is a function $\eta \in L^1(\mathbb{T})$
	such that
	\begin{equation*}
		\big\|\eta\big\|_{L^1(\mathbb{T})}
		\leq \left(\frac{1}{2}+\epsilon\right) ~\|V\|_2^2.
	\end{equation*} 
	This completes the proof. 
\end{proof}


	

Our next aim is to extend the class of functions for which the trace formula  \eqref{eq20} holds. For that we need the following lemma. The proof the following lemma is similar to the proof of \cite[Lemma 4.2]{CDP}, so we state it without proof.

\begin{lem}\label{l4}
	Let $T$ and $T_0$ be two contractions in a separable infinite dimensional Hilbert space $\hil$ and $f\in\mathcal{F}_2^+(\cir)$. Let $T_s=T_0+s(T-T_0), ~s\in[0,1]$, then 
	\begin{equation}\label{eq37}
		\at{\dds}{s=0}f(T_s)=\sum_{k=0}^{\infty}\hat{f}(k)\at{\dds}{s=0} T_s^k.	\end{equation}
\end{lem}
Now we are in a position to prove our main result in this section.
\begin{thm}\label{th5}
	Let $N$ be a contraction and $N_0$ be a normal contraction in an infinite dimensional separable Hilbert space $\hil$ such that $V=N-N_0\in\hils$. Denote $N_s=N_0 + sV$, ~$s\in [0,1]$. Then for any $\Phi\in\mathcal{F}_2^+(\cir)$, $\Big\{ \Phi(N)-\Phi(N_0)-\at{\dds}{s=0}\Phi(N_s)\Big\}\in\boh$
	and there exists an $L^1(\mathbb{T})$-function $\eta$  (unique up to an analytic term) such that
	\begin{align}\label{eq38}
		\operatorname{Tr}\Bigg\{\Phi(N)-\Phi(N_0)-\at{\dds}{s=0} \big\{\Phi(N_s)\big\} \Bigg\}=\int_{\cir}
		\Phi''(z)\eta(z)dz.
	\end{align}
	Furthermore, for every given $\epsilon >0$, we choose the function $\eta$ satisfying \eqref{eq38} in such a way so that 
	\[
	\|\eta\|_{L^1(\mathbb{T})}\leq \left(\frac{1}{2}+\epsilon\right) ~\|V\|_2^2. 
	\]
	
\end{thm}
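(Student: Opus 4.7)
The plan is to reduce the case $\Phi\in\mathcal{F}_2^+(\cir)$ to the polynomial case already settled in Theorem~\ref{th4} by truncating the Taylor series. Writing $\Phi(z)=\sum_{k=0}^{\infty}\hat{\Phi}(k)z^k$, set the partial sums $\Phi_M(z):=\sum_{k=0}^{M}\hat{\Phi}(k)z^k$, which are complex polynomials. Let $\eta\in L^1(\cir)$ be the function furnished by Theorem~\ref{th4} for the pair $(N,N_0)$ together with the linear path $N_s=N_0+sV$; it simultaneously represents every polynomial. I will then show that both sides of \eqref{eq38} applied to $\Phi_M$ converge, as $M\to\infty$, to the corresponding expressions for $\Phi$, with the same $\eta$.

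The key quantitative input is a trace-norm estimate for individual monomials. Using Lemma~\ref{th1} and the telescoping identity
\[
N^{k}-N_0^{k}-\sum_{j=0}^{k-1}N_0^{k-j-1}VN_0^{j}=\sum_{\substack{\alpha+\beta=k-1\\ \alpha\geq 1}}\bigl(N^{\alpha}-N_0^{\alpha}\bigr)VN_0^{\beta},
\]
together with Theorem~\ref{estimatethm} applied to $z\mapsto z^{\alpha}$ (so that $\|(z^{\alpha})'\|_{\infty}=\alpha$), I obtain
\[
\Bigl\|N^{k}-N_0^{k}-\at{\dds}{s=0}(N_s)^{k}\Bigr\|_1\leq \sum_{\alpha=1}^{k-1}\alpha\,\|V\|_2^{2}\leq \tfrac{k(k-1)}{2}\,\|V\|_2^{2}.
\]
Multiplying by $|\hat{\Phi}(k)|$ and summing, the hypothesis $\sum k^2|\hat{\Phi}(k)|<\infty$ shows that $\sum_{k=0}^{\infty}\hat{\Phi}(k)\{N^{k}-N_0^{k}-\tfrac{d}{ds}|_{s=0}(N_s)^{k}\}$ converges absolutely in $\mathcal{B}_1(\hil)$. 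Since $\|N\|,\|N_0\|\leq 1$, the partial sums $\Phi_M(N)$, $\Phi_M(N_0)$ and (by Lemma~\ref{l4} applied coefficient-wise, or simply by the above trace-norm bound) $\frac{d}{ds}|_{s=0}\Phi_M(N_s)$ converge to $\Phi(N)$, $\Phi(N_0)$, $\frac{d}{ds}|_{s=0}\Phi(N_s)$, with the trace-class combination converging in $\|\cdot\|_1$ to $\Phi(N)-\Phi(N_0)-\frac{d}{ds}|_{s=0}\Phi(N_s)$. This simultaneously establishes the trace-class claim of the theorem.

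Applying Theorem~\ref{th4} to each polynomial $\Phi_M$ with the common $\eta$ gives
\[
\operatorname{Tr}\Bigl\{\Phi_M(N)-\Phi_M(N_0)-\at{\dds}{s=0}\Phi_M(N_s)\Bigr\}=\int_{\cir}\Phi_M''(z)\,\eta(z)\,dz.
\]
The left-hand side converges to $\operatorname{Tr}\{\Phi(N)-\Phi(N_0)-\frac{d}{ds}|_{s=0}\Phi(N_s)\}$ by continuity of the trace on $\mathcal{B}_1(\hil)$. For the right-hand side, note that $\|\Phi_M''-\Phi''\|_{\infty}\leq \sum_{k>M}k(k-1)|\hat{\Phi}(k)|\to 0$, so dominated convergence against $\eta\in L^1(\cir)$ yields $\int \Phi_M''\eta\,dz\to\int\Phi''\eta\,dz$. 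This proves \eqref{eq38}, and the $L^1$-norm control on $\eta$ is inherited directly from Theorem~\ref{th4}.

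The main obstacle is the trace-norm convergence of the truncated expression, and the whole argument hinges on matching the growth bound $k(k-1)/2$ coming from the monomial estimate with the weight $k^2$ in the defining condition of $\mathcal{F}_2^+(\cir)$; this is exactly what makes $\mathcal{F}_2^+(\cir)$ the natural function class for the Koplienko-type formula.
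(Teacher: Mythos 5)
Your proposal is correct and follows essentially the same route as the paper, which simply invokes Lemma~\ref{l4} and the truncation argument of \cite[Theorem 4.3]{CDP}: approximate $\Phi$ by its polynomial partial sums, control the monomial remainders in trace norm via the telescoping identity and Theorem~\ref{estimatethm} (giving the $\tfrac{k(k-1)}{2}\|V\|_2^2$ bound matched to the weight $\sum k^2|\hat{\Phi}(k)|<\infty$), and pass to the limit on both sides of \eqref{eq20} with the single $\eta$ from Theorem~\ref{th4}. The only point to state a bit more carefully is the identification of $\at{\dds}{s=0}\Phi(N_s)$ with the limit of $\at{\dds}{s=0}\Phi_M(N_s)$, which is exactly what Lemma~\ref{l4} supplies and which you correctly flag.
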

\begin{proof} Using the above Lemma \ref{l4}, the proof follows from the similar argument as given in \cite[Theorem 4.3]{CDP}.
\end{proof}
\begin{cor}\label{cr1}
If $U$ and $U_0$ are two unitary operators in an infinite dimensional separable Hilbert space $\hil$ such that $U-U_0\in\hils$. Denote $U_s=U_0 + sV$, ~$s\in [0,1],$ and $U=U_1$. Then there exists a $L^1(\cir)$-function $\eta$ (unique up to an analytic term) and satisfying the equation \eqref{eq40} such that, for any $z\in\mathbb{C}$ with $|z|> 1$,
\begin{align*}
	\operatorname{Tr}\Bigg\{(U-z)^{-1}-(U_0-z)^{-1}-\at{\dds}{s=0}(U_s-z)^{-1} \Bigg\}=~\int_\cir~ \frac{d^2}{dw^2}\Big\{(w-z)^{-1}\Big\} \eta(w)dw.
\end{align*}
\end{cor}

\section{\bf Trace formula for contractions}
In our previous section, by finite-dimensional approximation method, we derive trace formula for certain pair of contractions in linear path with the initial contraction is normal. However, to relax the normality condition on the initial contraction it is natural to use Sch\"{a}ffer matrix dilation. Therefore, in this section we prove the trace formula for class of pair of contractions $(T,T_0)$ such that $T-T_0\in \mathcal{B}_2(\mathcal{H})$ using the Sch\"{a}ffer matrix unitary dilation. Note that Sch\"{a}affer matrix unitary dilation was substantially used in \cite[Section 9]{MNP19}, especially in the proofs of \cite[Theorems 9.4, 9.6, and 9.7]{MNP19} to study Krein trace formula. Let $T$ be a contraction on an infinite dimensional separable Hilbert space $\hil$. Now we need the construction of the Sch\"{a}ffer matrix unitary dilation $U_T$ of $T$ on the two-sided sequence space $l^2_{\mathbb{Z}}(\mathcal{H})$ of $\mathcal{H}$-valued sequences (that is,
on the Hilbert space $l^2_{\mathbb{Z}}(\mathcal{H}):=\bigoplus\limits_{n=-\infty}^{-1}\mathcal{H}\bigoplus\hil\bigoplus\limits_{n=1}^{\infty}\mathcal{H}$) (see \cite{NGFO}, chapter I, section 5). Note that we embed $\mathcal{H}$ in $l^2_{\mathbb{Z}}(\mathcal{H})$ by identifying the element $h\in \mathcal{H}$ with the vector $\{h_n\}_{n\in \mathbb{Z}}\in l^2_{\mathbb{Z}}(\mathcal{H})$ for which $h_0=h$ and $h_n=0$ ($n\neq 0$). Then $\mathcal{H}$ becomes a subspace of $l^2_{\mathbb{Z}}(\mathcal{H})$, and the orthogonal projection from  $l^2_{\mathbb{Z}}(\mathcal{H})$ into $\mathcal{H}$ is given by 
\begin{equation}\label{orthoprojection}
P_{\mathcal{H}}(\{h_n\}_{n\in \mathbb{Z}})=h_0.
\end{equation}
It is important to note that such a dilation does not have to be minimal but at the same time the advantage of this dilation is that it allows us to consider unitary dilations of contractions on $\mathcal{H}$ on the same Hilbert space $l^2_{\mathbb{Z}}(\mathcal{H})$. The following is the block matrix representation of $U_T$
\begin{equation}\label{shaffermatrix}
U_{_T} = \begin{bmatrix} 
	\ddots&\vdots&\vdots&\vdots&\vdots&\vdots&\vdots&\vdots& \reflectbox{$\ddots$} \\
	\cdots&0 &0 &I &0 &0 &0 &0 &\cdots  \\
	\cdots&0 &0 &0 &D_T &-T^* &0 &0 &\cdots \\
	\cdots&0 &0 &0 &\boxed{T} &D_{T^*} &0 &0 &\cdots \\
	\cdots&0 &0 &0 &0 &0 & I&0&\cdots\\
	\cdots&0 &0 &0 &0 &0 & 0&I&\cdots\\
	\reflectbox{$\ddots$}&\vdots &\vdots&\vdots&\vdots&\vdots&\vdots&\vdots& \ddots& 
\end{bmatrix},
\end{equation}
where $D_T=(I-T^*T)^{1/2}$ and $D_{T^*}=(I-TT^*)^{1/2}$ are the defect operators corresponding to the contractions $T$ and $T^*$ respectively. In the block matrix representation \eqref{shaffermatrix} of $U_T$, the entry $T$ is at the $(0,0)$ position and  the $(i,j)$-th entries $U_{_T}(i,j)$ of  $U_{_T}$ are given by 
\[
U_{_T}(0,0)=T,~U_{_T}(-1,0)=D_{T},~U_{_T}(-1,1)=-T^*,~U_{_T}(0,1)=D_{T^*},~U_{_T}(j,j+1)=I
\] 
for $j\neq 0,-1$, while all the remaining entries are equal to zero. The following lemma is essential to prove our main results in this section.

\begin{lem}\label{th6}
Let $T$ and $T_0$ be two contractions in an infinite dimensional separable Hilbert space $\mathcal{H}$ such that $V=T-T_0\in\hils$. Let $T_s=T_0+sV,~s\in[0,1]$. Then for any complex polynomial $p(\cdot),\hspace*{.1in}\Bigg\{p(T)-p(T_0)-\at{\dds}{s=0} \big\{p(T_s)\big\}\Bigg\}\in\boh$ and
\begin{align}\label{eqseclast}
	\operatorname{Tr}\Bigg\{p(T)-p(T_0)-\at{\dds}{s=0} \big\{p(T_s)\big\} \Bigg\}=\lim_{t\to 0}\operatorname{Tr}\Bigg\{p(T)-p(T_0)-\frac{p(T_t)-p(T_0)}{t} \Bigg\}.
\end{align}
\end{lem}
\begin{proof}
It will be sufficient to prove the theorem for $p(z)=z^r$. Note that for $r=0$ or $1$, both sides of \eqref{eqseclast} are identically zero.  Now by using similar kind of expressions as in \eqref{exp1} and \eqref{exp2}, we conclude  $\Bigg\{p(T)-p(T_0)-\at{\dds}{s=0} p(T_s)\Bigg\}, \left\{p(T)-p(T_0)-\frac{p(T_t)-p(T_0)}{t} \right\}\in\boh$ for $t\in [0,1]$. Furthermore, 
\begin{align*}
	&\Bnorm{~\Big\{p(T)-p(T_0)-\frac{p(T_t)-p(T_0)}{t} \Big\}-\Big\{p(T)-p(T_0)-\at{\dds}{s=0} p(T_s)\Big\}~}_1\\
	=~&\Bnorm{~\Big\{T^r-T_0^r-\frac{T_t^r-T_0^r}{t} \Big\}-\Big\{T^r-T_0^r-\sum_{j=0}^{r-1} T^{r-j-1}_{0}VT^j_0\Big\}~}_1\\
	=~&\Bnorm{~\sum_{j=0}^{r-1} T^{r-j-1}_{t}VT^j_0-\sum_{j=0}^{r-1} T^{r-j-1}_{0}VT^j_0~}_1
	=~\Bnorm{~\sum_{j=0}^{r-2}\sum_{k=0}^{r-j-2} T^{r-j-k-2}_{t}tVT_0^kVT^j_0~}_1\\
	\leq~&~|t|~\sum_{j=0}^{r-2}\sum_{k=0}^{r-j-2}\norm{T^{r-j-k-2}_{t}VT_0^k}_2\norm{VT^j_0}_2
	\leq~~|t|~\sum_{j=0}^{r-2}\sum_{k=0}^{r-j-2}\norm{V}_2^2\to 0~\text{as}~t\to 0,
\end{align*}
and hence \eqref{eqseclast} follows. This completes the proof.
\end{proof}

Motivated from the work of Marcantognini and Mor$\acute{a}$n	\cite{MaMo} we have the following one of the main result in this section.
\begin{thm}\label{th8}
Let $T$ and $T_0$ be two contractions in an infinite dimensional separable Hilbert space $\mathcal{H}$ such that
\vspace{0.1in}

$(i)$~~ $T-T_0\in\hils$, $(ii)$~~ $\dim \ker(T_0)=\dim \ker(T_0^*)$, and
$(iii)$~~ $D_{T_0}\in\hils$.
Denote $T_s=T_0+s(T-T_0), ~s\in[0,1]$. Then for any complex polynomial  $p(\cdot)$,
$\Bigg\{p(T)-p(T_0)-\at{\dds}{s=0}\big\{p(T_s)\big\}\Bigg\}\in\boh $ and there exists an $L^1(\mathbb{T})$-function $\eta$  (unique up to an analytic term) such that
\begin{align}\label{11}
	\operatorname{Tr}\Bigg\{p(T)-p(T_0)-\at{\dds}{s=0} \big\{p(T_s)\big\} \Bigg\}=\int_{\cir}
	p''(z)\eta(z)dz.
\end{align}
Moreover, the function $\eta$ satisfying \eqref{11} and satisfies the equation \eqref{eq20}.
\end{thm}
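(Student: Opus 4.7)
The plan is to lift the pair $(T,T_0)$ to its Schäffer matrix unitary dilations $(U,U_0)$ on $K=l^2_{\mathbb{Z}}(\mathcal{H})$, invoke Theorem~\ref{th5} for the resulting unitary pair (whose initial operator $U_0$ is automatically normal), and transfer the trace formula back to $\mathcal{H}$ via the compression $P_{\mathcal{H}}$.

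First, I would verify that $U-U_0\in\mathcal{B}_2(K)$. Hypothesis~(iii) gives $D_{T_0}\in\mathcal{B}_2(\mathcal{H})$, and since $I-T_0^*T_0$ and $I-T_0T_0^*$ share the same nonzero eigenvalues, also $D_{T_0^*}\in\mathcal{B}_2(\mathcal{H})$. The identity $D_T^2-D_{T_0}^2=T_0^*(T_0-T)+(T_0^*-T^*)T$ lies in $\mathcal{B}_1$ by~(i), forcing $D_T^2\in\mathcal{B}_1$ and hence $D_T\in\mathcal{B}_2$; Lipschitz continuity of the square-root on the positive trace-class cone then produces $D_T-D_{T_0},D_{T^*}-D_{T_0^*}\in\mathcal{B}_2$. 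Inspecting \eqref{shaffermatrix}, the operator $U-U_0$ has only four non-zero blocks---namely $V$, $-V^*$, $D_T-D_{T_0}$, $D_{T^*}-D_{T_0^*}$---each Hilbert--Schmidt, so $U-U_0\in\mathcal{B}_2(K)$.

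Next, since $U_0$ is unitary and hence normal, Theorem~\ref{th5} applied to $(U,U_0)$ along the linear path $U_s=U_0+s(U-U_0)$ yields an $L^1(\mathbb{T})$-function $\tilde{\eta}$ (unique modulo an analytic term) satisfying $\operatorname{Tr}_K\{p(U)-p(U_0)-\at{\dds}{s=0}p(U_s)\}=\int_{\mathbb{T}}p''(z)\tilde{\eta}(z)\,dz$.

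The main obstacle is the descent of this identity to $\mathcal{H}$. The dilation identity $P_{\mathcal{H}}p(U)|_{\mathcal{H}}=p(T)$ for any non-negative-power polynomial $p$ makes the compression of $p(U)-p(U_0)$ equal to $p(T)-p(T_0)$; but the Gâteaux derivative $\at{\dds}{s=0}p(U_s)=\sum_{j=0}^{r-1}U_0^{r-j-1}(U-U_0)U_0^{j}$ compresses to $\sum_{j=0}^{r-1}T_0^{r-j-1}V T_0^{j}$ plus correction terms arising from the $(-1,0),(-1,1)$, and $(0,1)$ blocks of $U-U_0$. The crux, which I expect to be the main technical hurdle, is to show---using the sparse off-diagonal block structure of $U-U_0$ and trace cyclicity on $K$---that these corrections either cancel out under $\operatorname{Tr}_K$ or assemble into an integral against an analytic function on $\mathbb{T}$, and therefore are invisible modulo the $H^1(\mathbb{T})$-ambiguity in the shift function.

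For the sharper bound $\left(\tfrac12+\epsilon\right)\|V\|_2^2$---rather than the $\left(\tfrac12+\epsilon\right)\|U-U_0\|_2^2$ handed down by Theorem~\ref{th5}---I would construct $\eta$ directly as in Theorem~\ref{th4}: approximate $U_0$ by finite-rank compressions via the Weyl--von Neumann-type Lemma~\ref{approxlm1} (applicable since $U_0$ is normal on $K$), transport the resulting projections to $\mathcal{H}$ via $P_{\mathcal{H}}$, and invoke the finite-dimensional Theorem~\ref{th2(1)}, whose semispectral-measure construction already produces the bound $\tfrac12\|V_n\|_2^2$ at each truncation. The Cauchy-in-$L^1(\mathbb{T})/H^1(\mathbb{T})$ argument of Theorem~\ref{th4}, with the telescoping differences controlled by Theorem~\ref{estimatethm}, then delivers both \eqref{11} and the sharp bound \eqref{eq40}.
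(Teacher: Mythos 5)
The decisive gap is at the very first step of your plan. Lifting \emph{both} $T$ and $T_0$ to their Sch\"{a}ffer unitary dilations forces you to show that the blocks $D_T-D_{T_0}$ and $D_{T^*}-D_{T_0^*}$ of $U-U_0$ are Hilbert--Schmidt, and your argument for this is incorrect: with $V=T-T_0$ one has $D_{T_0}^2-D_T^2=T_0^*V+V^*T_0+V^*V$, and the cross terms are only (bounded)$\times$(Hilbert--Schmidt), so the difference lies in $\mathcal{B}_2(\mathcal{H})$, not in $\mathcal{B}_1(\mathcal{H})$; hence the Powers--St{\o}rmer/Lipschitz square-root step has no trace-class input. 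In fact the conclusion is false under hypotheses $(i)$--$(iii)$: take $T_0=I$ (so $(ii)$, $(iii)$ hold with $D_{T_0}=0$) and $T=I-D$ with $D=\operatorname{diag}(1,\tfrac12,\tfrac13,\ldots)$; then $V=-D\in\mathcal{B}_2(\mathcal{H})$, but $D_T^2=2D-D^2\notin\mathcal{B}_1(\mathcal{H})$, so $D_T\notin\mathcal{B}_2(\mathcal{H})$ and $U-U_0$ is not Hilbert--Schmidt. Thus Theorem~\ref{th4}/\ref{th5} cannot be invoked for the pair of full unitary dilations, and the lift collapses. (Your ``descent'' worry, by contrast, is not the real obstacle: the Sch\"{a}ffer matrices are upper triangular with only one nonzero diagonal block, so all diagonal blocks of $p(U)-p(U_0)-\frac{d}{ds}\big|_{s=0}p(U_s)$ vanish except the $(0,0)$ block, which equals the corresponding expression in $T,T_0$; no correction terms survive the trace. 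This is exactly the mechanism the paper uses in Theorem~\ref{th7}, where $U_{_T}-U_{_{T_0}}\in\mathcal{B}_2$ is \emph{assumed}.)

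The paper's proof of Theorem~\ref{th8} avoids the obstruction by not dilating $T$ at all. It takes the minimal Sch\"{a}ffer unitary dilation $U_{T_0}$ of $T_0$ only, writes $T_0=V_{T_0}|T_0|$ and uses hypothesis $(ii)$ to extend $V_{T_0}$ to a unitary on $\mathcal{H}$, and then extends $T$ to a mere \emph{contraction} $U_T$ on the same dilation space by replacing the blocks $D_{T_0},D_{T_0^*}$ by $0$ and $-T_0^*$ by $-V_{T_0}^*$. The difference $U_T-U_{T_0}$ then involves only $V$, $D_{T_0}$, $D_{T_0^*}=V_{T_0}D_{T_0}V_{T_0}^*$ and $(V_{T_0}-T_0)^*$ with $V_{T_0}-T_0=V_{T_0}D_{T_0}^2(1+|T_0|)^{-1}$, all Hilbert--Schmidt by $(i)$ and $(iii)$ alone --- no condition on $D_T$ is needed; since $U_{T_0}$ is unitary (hence normal) and Theorem~\ref{th4} allows the perturbed operator to be just a contraction, the trace formula for $(U_T,U_{T_0})$ applies, and the traces are transferred to $\mathcal{H}$ via the upper-triangular structure together with the difference-quotient Lemma~\ref{th6}. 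Note also that along this route the bound from Theorem~\ref{th4} comes out in terms of $\|U_T-U_{T_0}\|_2^2$ rather than $\|T-T_0\|_2^2$; your proposed repair of the constant (running the truncation scheme of Theorem~\ref{th4} on $\mathcal{H}$ after ``transporting'' the projections) is not available as stated, since Lemma~\ref{appthm} and Theorem~\ref{th3(1)} require the initial operator on $\mathcal{H}$ to be normal, which $T_0$ need not be.
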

\begin{proof}
Let $U_{T_0}$ be the minimal Sch\"{a}ffer matrix unitary dilation of $T_0$ on the minimal dilation space $\mathcal{K}:=l^2_{\mathbb{N}}(\mathcal{D}_{T_0})\oplus\hil\oplus l^2_{\mathbb{N}}(\mathcal{D}_{T_0^*})$ and we have the same block matrix representation of $U_{T_0}$ as in \eqref{shaffermatrix} on $\mathcal{K}$. Let $T_0=V_{T_0}|T_0|$ be the polar decomposition of $T_0$, so that $|T_0|=(T_0^*T_0)^{1/2}$ and $V_{T_0}$ is an isometry from $\overline{\operatorname{Ran}(T_0^*)}$ onto $\overline{\operatorname{Ran}(T_0)}$. 
Since $\dim \ker(T_0)=\dim \ker(T_0^*)$, then we can extend $V_{T_0}$ to a unitary operator on the full space $\hil$. Now onward we assume $V_{T_0}$ is a unitary operator on $\hil$ such that $T_0=V_{T_0}|T_0|$. Next we note that
\begin{align}\label{13}
	V_{T_0}D_{T_0}=D_{T_0^*}V_{T_0} \text{ and } V_{T_0}-T_0=V_{T_0}(1-|T_0|)=V_{T_0}(1-T_0^*T_0)(1+|T_0|)^{-1}.
\end{align}
Now we extend $T$ to a contraction ${\tilde{U}_T}$ on $\mathcal{K}$ be setting

\begin{equation}\label{12}
	{\tilde{U}_T} = \begin{bmatrix} 
		\ddots&\vdots&\vdots&\vdots&\vdots&\vdots&\vdots&\vdots& \reflectbox{$\ddots$} \\
		\cdots&0 &0 &I &0 &0 &0 &0 &\cdots  \\
		\cdots&0 &0 &0 &0 &-V^*_{T_0} &0 &0 &\cdots \\
		\cdots&0 &0 &0 &\boxed{T} &0 &0 &0 &\cdots \\
		\cdots&0 &0 &0 &0 &0 & I&0&\cdots\\
		\cdots&0 &0 &0 &0 &0 & 0&I&\cdots\\
		\reflectbox{$\ddots$}&\vdots &\vdots&\vdots&\vdots&\vdots&\vdots&\vdots& \ddots& 
	\end{bmatrix}.
\end{equation}
Note that in the above block matrix representation \eqref{12} of $U_T$, the $(i,j)$-th entries ${\tilde{U}_T}(i,j)$ of  ${\tilde{U}_T}$ are given by 
\[
{\tilde{U}_T}(0,0)=T,~{\tilde{U}_T}(-1,0)=0,~{\tilde{U}_T}(-1,1)=-V^*_{T_0},~{\tilde{U}_T}(0,1)=0,~{\tilde{U}_T}(j,j+1)=I
\] 
for $j\neq 0,-1$, while all the remaining entries are equal to zero. Therefore we have
\begin{align*}
	T^n=\tilde{P}_{\mathcal{H}}{\tilde{U}_T}^n|_{\mathcal{H}}\quad \text{and}\quad T_0^n=\tilde{P}_{\mathcal{H}}U_{T_0}^n|_{\mathcal{H}}\quad \text{for}\quad n\geq 1,
\end{align*}
where $\tilde{P}_\hil$ is the orthogonal projection of $\mathcal{K}$ onto $\hil$.
By hypothesis $(i)$, $(iii)$ and using the relation \eqref{13} we conclude ${\tilde{U}_T}-U_{T_0}\in\mathcal{B}_2(\mathcal{K})$.
Denote $U_{t,T}=(1-t)U_{T_0}+t{\tilde{U}_T}=U_{T_0}+t({\tilde{U}_T}-U_{T_0})$ for $t\in[0,1]$. Note that $p({\tilde{U}_T})$ and $p(U_{T_0})$ are upper triangular matrices with the only nonzero diagonal entries
$p(T)$ and $p(T_0)$. Thus $\Bigg\{p({\tilde{U}_T})-p(U_{T_0})-\frac{p(U_{t,T})-p(U_{T_0})}{t} \Bigg\}$ is an upper triangular matrix with the only nonzero diagonal entry $\left\{p(T)-p(T_0)-\frac{p(T_t)-p(T_0)}{t} \right\}$ and hence
\begin{equation}\label{17}
	\operatorname{Tr}\Bigg\{p(T)-p(T_0)-\frac{p(T_t)-p(T_0)}{t} \Bigg\}
	= \operatorname{Tr}\Bigg\{p({\tilde{U}_T})-p(U_{T_0})-\frac{p(U_{t,T})-p(U_{T_0})}{t} \Bigg\}\quad \text{for} \quad t\in [0,1].
\end{equation}
Therefore by applying Lemma~\ref{th6} corresponding to  pair of contractions $(T,T_0)$ and $({\tilde{U}_T},U_{T_0})$ and using \eqref{17} we get
\begin{align*}
	&\operatorname{Tr}\Bigg\{p(T)-p(T_0)-\at{\dds}{s=0} p(T_s) \Bigg\}
	=~\lim_{t\to 0}\operatorname{Tr}\Bigg\{p(T)-p(T_0)-\frac{p(T_t)-p(T_0)}{t} \Bigg\}\\
	=&~\lim_{t\to 0}\operatorname{Tr}\Bigg\{p({\tilde{U}_T})-p(U_{T_0})-\frac{p(U_{t,T})-p(U_{T_0})}{t} \Bigg\} = \operatorname{Tr}\Bigg\{p({\tilde{U}_T})-p(U_{T_0})-\at{\dds}{s=0} p(U_{s,T}) \Bigg\},
\end{align*} 
which by applying Theorem~\ref{th4} corresponding to the pair  $({\tilde{U}_T},U_{T_0})$ we conclude that there exists an $L^1(\mathbb{T})$-function $\eta$  (unique up to an analytic term) satisfying \eqref{eq20} such that
\begin{align*}
	\operatorname{Tr}\Bigg\{p(T)-p(T_0)-\at{\dds}{s=0} \big\{p(T_s)\big\} \Bigg\}=\int_{\cir}
	p''(z)\eta(z)dz.
\end{align*}
This completes the proof.
\end{proof}

The following theorem is the second main result in this section in which we prove the trace formula for a class of pairs of contractions different from the class mentioned in  Theorem~\ref{th8}.

\begin{thm}\label{th7}
Let $T$ and $T_0$ be two contractions in an infinite dimensional separable Hilbert space $\mathcal{H}$ such that $U_{_{T}}-U_{_{T_0}}\in\mathcal{B}_2(l^2_{\mathbb{Z}}(\mathcal{H}))$. Denote $T_s=T_0+s(T-T_0), ~s\in[0,1]$. Then for any complex polynomial  $p(\cdot)$,
$\Bigg\{p(T)-p(T_0)-\at{\dds}{s=0}\big\{p(T_s)\big\}\Bigg\}\in\boh$
and there exists an $L^1(\mathbb{T})$-function $\eta$  (unique up to an analytic term) such that
\begin{align}\label{eqlast20}
	\operatorname{Tr}\Bigg\{p(T)-p(T_0)-\at{\dds}{s=0} \big\{p(T_s)\big\} \Bigg\}=\int_{\cir}
	p''(z)\eta(z)dz.
\end{align}
Moreover, the function $\eta$ satisfying \eqref{eqlast20}, and also satisfies the  estimate \eqref{eq40} with $V:=T-T_0$.
\end{thm}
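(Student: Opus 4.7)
The plan is to follow the strategy of Theorem~\ref{th8}, but now using the standard Sch\"affer matrix unitary dilations $U_T,U_{T_0}$ of \eqref{shaffermatrix} on $l^2_\mathbb{Z}(\hil)$ rather than the modified dilation \eqref{12}. By assumption $U_T - U_{T_0}\in\mathcal{B}_2(l^2_\mathbb{Z}(\hil))$, and since the $(0,0)$ block of $U_T-U_{T_0}$ equals $T-T_0$, this in particular yields $T-T_0\in\hils$, so Lemma~\ref{th6} is applicable at the contraction level. Set $U_{s,T}=U_{T_0}+s(U_T-U_{T_0})$ for $s\in[0,1]$.

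The first step is to apply Theorem~\ref{th4} directly to the pair of unitaries $(U_T,U_{T_0})$ (a fortiori normal contractions whose difference is Hilbert--Schmidt). This yields an $L^1(\cir)$-function $\eta$, unique up to an analytic summand and satisfying $\|\eta\|_{L^1(\cir)}\le(1/2+\epsilon)\|U_T-U_{T_0}\|_2^2$, such that
\begin{equation*}
\operatorname{Tr}\Big\{p(U_T)-p(U_{T_0})-\at{\dds}{s=0}\{p(U_{s,T})\}\Big\}=\int_\cir p''(z)\,\eta(z)\,dz.
\end{equation*}

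The second step is to transport this identity from $l^2_\mathbb{Z}(\hil)$ down to $\hil$. Inspecting the block structure of \eqref{shaffermatrix}, both $U_T$ and $U_{T_0}$ are upper triangular in the two-sided block decomposition with a single non-zero diagonal block (at position $(0,0)$) equal to $T$ and $T_0$ respectively; the same is true of the convex combination $U_{s,T}$, whose $(0,0)$ block is $T_s$. A short induction on the degree shows that for any complex polynomial $p$, the operator $p(U_{s,T})$ is upper triangular with $(0,0)$ block $p(T_s)$ and with constant off-$(0,0)$ diagonal entries equal to $p(0)I$. Hence in
\begin{equation*}
p(U_T)-p(U_{T_0})-\tfrac{1}{t}\{p(U_{t,T})-p(U_{T_0})\}
\end{equation*}
the off-$(0,0)$ diagonals cancel, and the trace of this operator on $l^2_\mathbb{Z}(\hil)$ equals the $\hil$-trace of the $(0,0)$ block, namely $\operatorname{Tr}\{p(T)-p(T_0)-t^{-1}[p(T_t)-p(T_0)]\}$. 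Applying Lemma~\ref{th6} at both levels to pass $t\to 0$ yields
\begin{equation*}
\operatorname{Tr}\Big\{p(T)-p(T_0)-\at{\dds}{s=0}p(T_s)\Big\}=\operatorname{Tr}\Big\{p(U_T)-p(U_{T_0})-\at{\dds}{s=0}p(U_{s,T})\Big\},
\end{equation*}
and the left-hand side lies in $\boh$ by Lemma~\ref{th6}. Combining with the first step produces \eqref{eqlast20} with $\eta$ inherited (together with its norm bound) from the unitary case.

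The main technical point is the block-structural induction along the path $U_{s,T}$, which is only a convex combination of unitaries and need not itself be a unitary dilation of $T_s$. It goes through because the Sch\"affer form keeps $U_{s,T}$ upper triangular with $T_s$ as its single non-vanishing diagonal entry; thus in computing $(U_{s,T})^n_{00}$ the only surviving path from column $0$ back to row $0$ is the constant one at index $0$, giving $(U_{s,T})^n_{00}=T_s^n$ and, by linearity, $p(U_{s,T})_{00}=p(T_s)$.
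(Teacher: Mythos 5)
Your proposal is correct and follows essentially the same route as the paper: extract $T-T_0\in\hils$ by compressing $U_{_T}-U_{_{T_0}}$ to the $(0,0)$ block, use the block upper-triangular structure of the Sch\"affer dilations (and of the linear path $U_{t,T}$) to identify the traces of the difference-quotient expressions on $l^2_{\mathbb{Z}}(\hil)$ and on $\hil$, pass to the limit $t\to 0$ via Lemma~\ref{th6} at both levels, and invoke Theorem~\ref{th4} for the unitary pair $(U_{_T},U_{_{T_0}})$. Your explicit treatment of the constant diagonal entries $p(0)I$ and their cancellation is a correct elaboration of a detail the paper leaves implicit in the argument of Theorem~\ref{th8}.
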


\begin{proof}
Note that $U_T$ and $U_{T_0}$ are the Sch\"{a}ffer matrix unitary dilation of $T$ and $T_0$ respectively on the same Hilbert space $l^2_{\mathbb{Z}}(\mathcal{H})$ (see \eqref{shaffermatrix}), that is
\begin{equation}\label{dilationeq}
	T^n=P_{\mathcal{H}}U_T^n|_{\mathcal{H}}\quad \text{and}\quad T_0^n=P_{\mathcal{H}}U_{T_0}^n|_{\mathcal{H}}\quad \text{for}\quad n\geq 1,
\end{equation}
where $P_{\mathcal{H}}$ as in \eqref{orthoprojection}.
Since $U_{_{T}}-U_{_{T_0}}\in\mathcal{B}_2(l^2_{\mathbb{Z}}(\mathcal{H}))$, then from \eqref{dilationeq} we conclude $T-T_0\in \mathcal{B}_2(\mathcal{H})$. Denote $U_{t,T}=(1-t)U_{T_0}+tU_T=U_{T_0}+t(U_T-U_{T_0})$ for $t\in[0,1]$. Then by the similar argument as in Theorem~\ref{th8} we conclude the theorem.
\end{proof}
\begin{rem}
	In the above proof of Theorem~\ref{th4} and hence of Theorem \ref{th8}, and Theorem~\ref{th7}, due to the finite-dimensional approximation method, we have the existence of a sequence $\{\eta_n\}$ of integrable functions with explicit expressions in terms of the semi-spectral measures converging to our desired spectral shift function in an appropriate sense. Note that in the proof of Theorem \ref{PSthm} (\cite[Theorem 1]{PoSu}), the sequence $\{\eta_n\}$ has no explicit expressions in terms of the semi-spectral measures. In particular, in finite-dimensional Hilbert space we have explicit expressions of shift functions as done in the case of Krein trace formula for pair of self-adjoint operators by Voiculescu \cite{Voi}, Sinha and Mohapatra \cite{MoSi94}.  Moreover, our approach gives a better bound \eqref{eq40} for the shift function compared to \eqref{eqin40}. 
\end{rem}
\begin{rem}
In a similar spirit as in Theorem \ref{th5}, we also prove Theorem~\ref{th7} corresponding to the class $\mathcal{F}_2^+(\cir)$.
\end{rem}
\begin{rem}
If one of $T$ and $T_0$ is a strict contraction such that $T-T_0\in \hils$, then the conclusion of Theorem~\ref{th7} is also true. Indeed, we can show as in \cite{ChSiCont} that the difference of the corresponding Sch\"{a}ffer matrix unitary dilation is also a Hilbert-Schmidt operator, that is $U_T-U_{T_0}\in \hils$. 
\end{rem}

\section{\bf Trace formula for self-adjoint operators}
In this section, motivated from the work of Neidhardt in \cite[Section 3]{NH} we prove the trace formula, using one of our main results in earlier section, for a pair of self-adjoint operators $H_0$ and $H_1$ on an infinite-dimensional Hilbert space $\mathcal{H}$ such that the difference $(H_1-i)^{-1}-(H_0-i)^{-1}\in \mathcal{B}_2(\mathcal{H})$ via Cayley transform. 
Let us consider the following class of functions
\[\mathcal{F}_\mathbb{R}:=\left\{ \psi:\mathbb{R}\to\mathbb{C} \text{ such that } \psi(\lambda)=\phi\left(\dfrac{i-\lambda}{i+\lambda}\right) \text{ for some } \phi\in\mathcal{F}_2^+(\cir)  \right\}.\] 
Let $H$ and $H_0$ be two arbitrary self-adjoint operators in $\hil$ such that $Dom(H) = Dom(H_0)$, and let
\begin{equation}\label{CTrans}
U =(i-H)(i+H)^{-1} \quad \text{and} \quad U_0=(i-H_0)(i+H_0)^{-1}
\end{equation}  
be the corresponding unitary operators obtained via the  Cayley transform of $H$ and $H_0$ respectively. Let $\psi(\lambda)=\phi\left(\dfrac{i-\lambda}{i+\lambda}\right)\in\mathcal{F}_\mathbb{R}$ for some $\phi\in\mathcal{F}_2^+(\cir)$, $U_s=(1-s)U_0+sU$, and $H_s=sH_0+(1-s)H$ for $s\in[0,1]$. Then it is easy to observe that
\begin{align*}
\psi(H)=\phi(U),~\psi(H_0)=\phi(U_0), \text{ and } \psi(W_s)=\phi(U_s),
\end{align*}
where 
\begin{align*}
W_s=\Big((H+i)(H_s+i)^{-1}(H_0+i)-i\Big),
\end{align*} 
and hence we have the following operator equality
\begin{align}\label{eqself}
\psi(H)-\psi(H_0)-\dfrac{d}{ds}\Big|_{s=0}\psi(W_s)
=~\phi(U)-\phi(U_0)-\dfrac{d}{ds}\Big|_{s=0}\phi(U_s).
\end{align}

In the following, we denote $R_z=(H-z)^{-1}$ for $z\in \mathbb{C}$ as the resolvent operator corresponding to an unbounded self-adjoint operator $H$ and $\rho(H)$ is the associated resolvent set. Moreover, we also denote $Dom(H)$ as the domain of definition of the self-adjoint operator $H$ (possibly unbounded). The following is the main result in this section.

\begin{thm}\label{selfth}
Let $H$ and $H_0$ be two self-adjoint operators in a separable infinite dimensional Hilbert space $\hil$ such that $H-H_0\in\hils$. Let $$W_s=\Big((H+i)(H_s+i)^{-1}(H_0+i)-i\Big),$$ where $H_s=sH_0+(1-s)H,~s\in [0,1] $. Then there exists a measurable function $\xi:\mathbb{R}\to\mathbb{C}$ obeying $(1+\lambda^2)^{-1}\xi\in L^1(\mathbb{R})$ such that
\begin{align}\label{selfuni}
	\operatorname{Tr}\left\{\psi(H)-\psi(H_0)-\dfrac{d}{ds}\Big|_{s=0}\psi(W_s)\right\}=\int_{-\infty}^{\infty}~\dfrac{d}{d\lambda}\Big\{(1+\lambda^2)\psi'(\lambda)\Big\}~\xi(\lambda)~d\lambda 
\end{align} 
for each $\psi\in\mathcal{F}_\mathbb{R}$.
In particular, for all $z\in\mathbb{C}$ with $\operatorname{Im}(z)< 0$,
\begin{align}\label{res}
	\operatorname{Tr}\Bigg\{ (H-z)^{-1}-(H_0-z)^{-1}-\frac{i+H_0}{H_0-z}~M~\frac{i+H_0}{H_0-z}\Bigg\}=~\int_{-\infty}^{\infty} ~\dfrac{2(1+\lambda z)}{(\lambda-z)^3} ~\xi(\lambda)~d\lambda,
\end{align}
where $M=R_{-i}-R^0_{-i}$.
\end{thm}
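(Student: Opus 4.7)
The plan is to Cayley-transform to the unitary setting, apply Theorem~\ref{th5}, and pull the resulting trace formula back to the real line. Setting $U=(i-H)(i+H)^{-1}$, $U_0=(i-H_0)(i+H_0)^{-1}$, and $U_s=(1-s)U_0+sU$, the algebraic identity $U=2i(H+i)^{-1}-I$ yields
\begin{equation*}
U-U_0 = 2i\bigl((H+i)^{-1}-(H_0+i)^{-1}\bigr) = 2i\bigl(R_{-i}-R_{-i}^0\bigr).
\end{equation*}
By the resolvent identity $R_w-R_w^0=\bigl(I+(w-z)R_w\bigr)(R_z-R_z^0)\bigl(I+(w-z)R_w^0\bigr)$, the hypothesis $R_z-R_z^0\in\hils$ propagates to $w=-i$, so $U-U_0\in\hils$. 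Applying Theorem~\ref{th5} to the pair $(U,U_0)$ of unitaries (in particular, normal contractions) produces $\eta\in L^1(\cir)$, unique up to $H^1(\cir)$, with
\begin{equation*}
\operatorname{Tr}\Bigl\{\phi(U)-\phi(U_0)-\at{\dds}{s=0}\phi(U_s)\Bigr\}=\int_{\cir}\phi''(w)\eta(w)\,dw
\end{equation*}
for every $\phi\in\mathcal{F}_2^+(\cir)$, and the operator identity \eqref{eqself} identifies this left-hand side with $\operatorname{Tr}\{\psi(H)-\psi(H_0)-\at{\dds}{s=0}\psi(W_s)\}$ whenever $\psi\in\mathcal{F}_{\mathbb{R}}$.

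The main work lies in the change of variable $w=(i-\lambda)/(i+\lambda)$, which carries $\mathbb{R}$ bijectively onto $\cir\setminus\{-1\}$ with $\frac{dw}{d\lambda}=\frac{i(1+w)^2}{2}$ and $(1+\lambda^2)(1+w)^2=4w$. Chain differentiation gives the fundamental relation $(1+\lambda^2)\psi'(\lambda)=2iw\phi'(w)$ and
\begin{equation*}
\frac{d}{d\lambda}\bigl\{(1+\lambda^2)\psi'(\lambda)\bigr\}=-(1+w)^2\bigl[\phi'(w)+w\phi''(w)\bigr].
\end{equation*}
I would try $\xi(\lambda):=-i\eta(w)/(2w)$; substituting $d\lambda=\frac{2\,dw}{i(1+w)^2}$ transforms $\int_{\mathbb{R}}\frac{d}{d\lambda}\{(1+\lambda^2)\psi'(\lambda)\}\xi(\lambda)\,d\lambda$ into the sum $\int_{\cir}\phi''(w)\eta(w)\,dw + \int_{\cir}\phi'(w)\eta(w)w^{-1}\,dw$, while the integrability $(1+\lambda^2)^{-1}\xi\in L^1(\mathbb{R})$ is immediate from $|(1+\lambda^2)^{-1}\xi(\lambda)|\,d\lambda = \tfrac{1}{4}|\eta(w)||w|^{-2}\,|dw|$. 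The hard part is the parasitic remainder: since the freedom of $\eta$ modulo $H^1(\cir)$ does not by itself kill the $\phi'$-term, I would correct $\xi$ by adding a term $\mu(\lambda)$ determined by $\mu'(\lambda)=\eta(w)/[w(1+\lambda^2)]$, i.e.\ an appropriate primitive of $\eta(v)/v^2$ under the pullback, so that after integration by parts its contribution $-\int_{\mathbb{R}}(1+\lambda^2)\psi'(\lambda)\mu'(\lambda)\,d\lambda$ exactly cancels the remainder.

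Finally, for the special case \eqref{res}, I would specialize to $\psi(\lambda)=1/(\lambda-z)$ with $\operatorname{Im}(z)<0$, which lies in $\mathcal{F}_{\mathbb{R}}$ because the corresponding $\phi(w)=(1+w)/[(i-z)-w(i+z)]$ has its pole at $|(i-z)/(i+z)|>1$. A direct computation gives $\frac{d}{d\lambda}\{(1+\lambda^2)\psi'(\lambda)\}=2(1+\lambda z)/(\lambda-z)^3$, matching the right-hand side of \eqref{res}. For the operator term, differentiating $W_s=(H+i)(H_s+i)^{-1}(H_0+i)-i$ at $s=0$ (where $W_0=H_0$ while $H_{s=0}=H$) yields
\begin{equation*}
\at{\dds}{s=0}\psi(W_s)=(H_0-z)^{-1}(H_0-H)(H+i)^{-1}(H_0+i)(H_0-z)^{-1},
\end{equation*}
which an algebraic rearrangement using $(H_0+i)(H+i)^{-1}-I=(H_0-H)(H+i)^{-1}$ identifies with $\tfrac{i+H_0}{H_0-z}M\tfrac{i+H_0}{H_0-z}$, completing the reduction.
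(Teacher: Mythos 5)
Your overall route is the paper's: Cayley transform, propagation of the Hilbert--Schmidt hypothesis from $z$ to $-i$ by the resolvent identity, Theorem~\ref{th5} applied to $(U,U_0)$ combined with the identity \eqref{eqself}, and then the pullback $w=(i-\lambda)/(i+\lambda)$ with the first-order term absorbed into the shift function by integration by parts. The genuine gap is the step where you claim that an ``appropriate primitive'' $\mu$ of $\eta(w)/[w(1+\lambda^2)]$ contributes exactly $-\int_{\mathbb{R}}(1+\lambda^2)\psi'(\lambda)\mu'(\lambda)\,d\lambda$ and so cancels the parasitic term $\int_{\cir}\phi'(w)\eta(w)w^{-1}dw$. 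Integration by parts on $\mathbb{R}$ also produces the boundary term $\bigl[(1+\lambda^2)\psi'(\lambda)\,\mu(\lambda)\bigr]_{-\infty}^{+\infty}$, and since $(1+\lambda^2)\psi'(\lambda)=2iw\phi'(w)\to -2i\phi'(-1)$ as $\lambda\to\pm\infty$, this boundary term equals $-2i\phi'(-1)\bigl(\mu(+\infty)-\mu(-\infty)\bigr)$ with
\begin{equation*}
\mu(+\infty)-\mu(-\infty)=\int_{\mathbb{R}}\mu'(\lambda)\,d\lambda=\frac{1}{2i}\int_{\cir}\frac{\eta(w)}{w^{2}}\,dw,
\end{equation*}
a quantity independent of the integration constant (adding a constant to $\mu$ merely shifts $\xi$ by a constant, which contributes nothing because $(1+\lambda^2)\psi'$ has equal limits at $\pm\infty$). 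Hence no choice of primitive achieves the cancellation, and for a generic $\eta$ your formula acquires the spurious term $-\phi'(-1)\int_{\cir}\eta(w)w^{-2}dw$. The missing idea is exactly the normalization you set aside: use the $H^1$-freedom \emph{first}, replacing $\eta$ by $\Gamma(w)=\eta(w)-\frac{w}{2\pi i}\int_{\cir}\eta(\omega)\omega^{-2}d\omega$ (an analytic correction, so $\int_{\cir}\phi''\Gamma\,dw=\int_{\cir}\phi''\eta\,dw$), which forces $\int_{\cir}\Gamma(w)w^{-2}dw=0$ and makes the primitive have equal limits; this is precisely the paper's passage from $\eta$ to $\Gamma$ and then to $\tilde{\eta}(e^{it})=-ie^{-it}\Gamma(e^{it})+\int_{0}^{t}e^{-is}\Gamma(e^{is})ds$, after which your computation closes and gives $\xi=\frac12\tilde{\eta}\bigl(\frac{i-\lambda}{i+\lambda}\bigr)$ up to your sign conventions.

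The remaining items are correct and essentially as in the paper: the integrability of $(1+\lambda^2)^{-1}\xi$, the admissibility of $\psi(\lambda)=(\lambda-z)^{-1}$ via the pole $\tau=(i-z)/(i+z)$ with $|\tau|>1$, and the identity $\frac{d}{d\lambda}\{(1+\lambda^2)\psi'(\lambda)\}=2(1+\lambda z)/(\lambda-z)^{3}$. Your direct differentiation of $s\mapsto (W_s-z)^{-1}$ and the rearrangement identifying it with $\frac{i+H_0}{H_0-z}M\frac{i+H_0}{H_0-z}$ are algebraically right, but you should justify that derivative (the path $H_s$ is unbounded); the paper avoids this by differentiating $(U_s-\tau)^{-1}$ along the linear path of contractions and transferring through $\psi(W_s)=\phi(U_s)$, which is the cleaner way to make that step rigorous.
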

\begin{proof}
Let $U$ and $U_0$ be as in \eqref{CTrans}. Since $R_z-R^0_z\in\hils$, then it immediately follows that $U-U_0\in\hils$. Therefore using the above identity \eqref{eqself} and using Theorem \ref{th5} we conclude that there exists an $L^1(\mathbb{T})$-function $\eta$  (unique up to an analytic term) such that 
\begin{align}\label{1}
	\nonumber&\operatorname{Tr}\left\{\psi(H)-\psi(H_0)-\dfrac{d}{ds}\Big|_{s=0}\psi\left(W_s\right)\right\}\\
	\nonumber&=\operatorname{Tr}\left\{\phi(U)-\phi(U_0)-\dfrac{d}{ds}\Big|_{s=0}\phi(U_s)\right\} =\int_{\cir} \phi''(z)\eta(z)dz\\
	&=\int_{\cir} \phi''(z)\left(\eta(z)-\dfrac{z}{2\pi i}\int_\cir \dfrac{\eta(\omega)}{\omega^2}d\omega\right)dz
	=\int_{\cir} \phi''(z)\Gamma(z)dz,
\end{align}
where 
$$\Gamma(z)=\eta(z)-\dfrac{z}{2\pi i}\int_{\cir} \dfrac{\eta(\omega)}{\omega^2}d\omega, \quad z\in \mathbb{T}.$$
Next by using change of variables $z=e^{it}$, performing integration by-parts and using the fact that 
$
\int_{0}^{2\pi}e^{-is}\Gamma(e^{is})ds = 0
$
we get 
\begin{align}\label{4}
	\nonumber&\operatorname{Tr}\left\{\psi(H)-\psi(H_0)-\dfrac{d}{ds}\Big|_{s=0}\psi\left(W_s\right)\right\}\\
	&\nonumber = \int_{0}^{2\pi}\dfrac{d^2}{dt^2}\{\phi(e^{it})\}\left(-ie^{-it}\right)\Gamma(e^{it})dt-\int_{0}^{2\pi}\ddt\{\phi(e^{it})\}e^{-it}\Gamma(e^{it})dt\\
	\nonumber & =\int_{0}^{2\pi}\dfrac{d^2}{dt^2}\{\phi(e^{it})\}\left[\left(-ie^{-it}\right)\Gamma(e^{it})+\left(\int_{0}^{t}e^{-is}\Gamma(e^{is})ds\right) \right]dt\\
	& =\int_{0}^{2\pi}\dfrac{d^2}{dt^2}\{\phi(e^{it})\}~\tilde{\eta}(e^{it})~dt,
\end{align}
where $$\tilde{\eta}(e^{it})=\left(-ie^{-it}\right)\Gamma(e^{it})+\left(\int_{0}^{t}e^{-is}\Gamma(e^{is})ds\right), ~t\in[0,2\pi].$$
Clearly $\tilde{\eta}\in L^1([0,2\pi])$ and again by using change of variables  $e^{it}=\dfrac{i-\lambda}{i+\lambda}$, from \eqref{4} we conclude
\begin{align*}
	\operatorname{Tr}\left\{\psi(H)-\psi(H_0)-\dfrac{d}{ds}\Big|_{s=0}\psi\left(W_s\right)\right\}
	&=\int_{-\infty}^{\infty}~\dfrac{d}{d\lambda}\Big\{ (1+\lambda^2)\psi'(\lambda)\Big\}~\xi(\lambda)~d\lambda,
\end{align*}
where $\xi(\lambda)=\dfrac{1}{2}\tilde{\eta}\left(\dfrac{i-\lambda}{i+\lambda}\right), \lambda\in\mathbb{R}$, and moreover $$\int_{-\infty}^{\infty}\dfrac{|\xi(\lambda)|}{1+\lambda^2}~d\lambda=\dfrac{1}{4}\int_{0}^{2\pi}|\tilde{\eta}(e^{it})|~dt<\infty.$$
Next, we denote $\tau=~\frac{i-z}{i+z}$ for $z\in\mathbb{C}$ with $\operatorname{Im}(z)< 0$. Then it is easy to observe that $|\tau|>1$. Now consider $\psi(\lambda)=(\lambda-z)^{-1}$, $\lambda\in \mathbb{R}$, and $\phi(w)=\dfrac{i(1+\tau)^2}{2}\left[\dfrac{1}{1+\tau}+\dfrac{1}{w-\tau}\right]$, $w\in \mathbb{T}$. Then it is easy to conclude that $\psi(\lambda)=\phi\left(\dfrac{i-\lambda}{i+\lambda}\right)$, $\phi\in \mathcal{A}_{\mathbb{T}}$ and hence $\psi \in \mathcal{F}_\mathbb{R}$. Therefore by applying \eqref{selfuni} corresponding to $\psi$ we get
\begin{align}\label{eqq}
	\operatorname{Tr}\Bigg\{ (H-z)^{-1}-(H_0-z)^{-1}-\dfrac{d}{ds}\Big|_{s=0}(W_s-z)^{-1}\Bigg\}
	=\int_{-\infty}^{\infty}~\dfrac{2(1+\lambda z)}{(\lambda-z)^3} ~\xi(\lambda)~d\lambda.
\end{align}
On the other hand the equality $\phi(U_0)=\psi(H_0)$ yields that
\begin{align*}
	(U_0-\tau)^{-1}=~\frac{i}{2}(i+z)\dfrac{i+H_0}{H_0-z},
\end{align*}
and hence
\begin{align}\label{dereq}
	\nonumber\dfrac{d}{ds}\Big|_{s=0}(W_s-z)^{-1}=&\dfrac{i(1+\tau)^2}{2}\at{\dds}{s=0}(U_s-\tau)^{-1}\\
	\nonumber	=&-2i~\dfrac{i(1+\tau)^2}{2} (U_0-\tau)^{-1}(R_{-i}-R^0_{-i})(U_0-\tau)^{-1}\\
	=& \frac{i+H_0}{z-H_0}~~M~~\frac{i+H_0}{z-H_0},
\end{align}
where $M=R_{-i}-R^0_{-i}$. Therefore combining equations \eqref{eqq} and \eqref{dereq} we get \eqref{res}. This completes the proof. 
\end{proof}
\begin{rem}
It is important to note that even though the formulas \eqref{selfuni} and \eqref{res} are look like similar to the formulas (3.12) and (3.11) respectively obtained in \cite[Theorem 3.2]{NH}, but they are actually different since our path $W_s=\Big((H+i)(H_s+i)^{-1}(H_0+i)-i\Big)$ is not exactly same as the path considered by Neidhardt in \cite[Theorem 3.2]{NH}. In other words, we have obtained the path $W_s$ by considering the Cayley  transformation on the linear path associated to the pair $(U,U_0)$ whereas Neidhardt obtained the required path by considering the Cayley transformation on the multiplicative path associated to the pair $(U,U_0)$. Furthermore, the path $W_s$ is closer to the path considered by Koplienko \cite{Ko}. Also, we would like to mention that using the multiple operator integrals (MOI), the trace formula \eqref{selfuni} was obtained in \cite{PoSkSu15, Sk}, but here we present the formula using one our main results, namely Theorem \ref{th5} without using MOI, particularly focusing on the path. Our motive in keeping the details of the very short proof of the above theorem is to keep this article self-contained and to aid the study of the next section.
\end{rem}
\section{\bf Trace formula for maximal dissipative operators}
In this section, our aim is to obtain the Koplienko trace formula for a pair of maximal dissipative operators from the existing Koplienko trace formula \eqref{eqin20} corresponding to a pair of contractions $(T,T_0)$ such that $T-T_0\in \mathcal{B}_2(\mathcal{H})$. In this connection, it is worth mentioning a paper by Malamud, Neidhardt and Peller \cite{MNP19}, where an analogous study of Krein's trace formula for a pair of maximal dissipative operators was achieved. To that aim we start with the following definition.
\begin{defn}
A densely defined linear operator $L$ in $\hil$ is called dissipative if $\operatorname{Im}\langle Lh, h\rangle\geq 0$ for $h\in Dom(L)$. It is called maximal dissipative if $L$ has no proper dissipative extension.
\end{defn}
The Cayley transform of a maximal dissipative operator $L$ is defined by
\begin{align}
T=(i-L)(i+L)^{-1}.
\end{align} 
It is well known that $T$ is a contraction. Moreover, a contraction $T$ is the Cayley transform of a maximal dissipative operator $L$ if and only if $1$ is not an eigenvalue of $T$. Now we have the following main theorem in this section.

\begin{thm}\label{dissipthm}
Let $L$ and $L_0$ be two maximal dissipative operators in a separable infinite dimensional Hilbert space $\hil$ such that $Dom(L) = Dom(L_0)$ and  $(L+i)^{-1}-(L_0+i)^{-1}\in\hils$. Let $$Q_s=\Big((L+i)(L_s+i)^{-1}(L_0+i)-i\Big),$$ where $L_s=sL_0+(1-s)L,~s\in [0,1] $. Then there exists a measurable function $\xi:\mathbb{R}\to\mathbb{C}$ obeying $(1+\lambda^2)^{-1}\xi\in L^1(\mathbb{R})$ such that
\begin{align}
	\operatorname{Tr}\left\{\psi(L)-\psi(L_0)-\dfrac{d}{ds}\Big|_{s=0}\psi(Q_s)\right\}=\int_{-\infty}^{\infty}~\dfrac{d}{d\lambda}\Big\{(1+\lambda^2)\psi'(\lambda)\Big\}~\xi(\lambda)~d\lambda 
\end{align} 
for each $\psi\in\mathcal{F}_\mathbb{R}$.
\end{thm}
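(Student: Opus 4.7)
The strategy is to reduce the problem, via the Cayley transform, to the contraction trace formula of Theorem~\ref{PSthm}. This is the exact analogue of the argument used for pairs of self-adjoint operators in Theorem~\ref{selfth}, except that now maximal dissipativity replaces self-adjointness, so the Cayley transforms land in the class of \emph{contractions} (rather than unitaries) on $\mathcal{H}$.

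I would first introduce
\[
T=(i-L)(i+L)^{-1}=2i(L+i)^{-1}-I,\qquad T_0=(i-L_0)(i+L_0)^{-1}=2i(L_0+i)^{-1}-I,
\]
noting that $\|T\|,\|T_0\|\le 1$ since $L$ and $L_0$ are maximal dissipative, and that
\[
T-T_0=2i\bigl((L+i)^{-1}-(L_0+i)^{-1}\bigr)\in\mathcal{B}_2(\mathcal{H})
\]
by hypothesis. Next, using the decomposition $L_s+i=s(L_0+i)+(1-s)(L+i)$, a short resolvent calculation gives
\[
(Q_s+i)^{-1}=(L_0+i)^{-1}(L_s+i)(L+i)^{-1}=s(L+i)^{-1}+(1-s)(L_0+i)^{-1},
\]
so that the Cayley transform of $Q_s$ is precisely the linear path $T_s:=sT+(1-s)T_0$ between the contractions $T_0$ and $T$. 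For $\psi\in\mathcal{F}_\mathbb{R}$ with $\psi(\lambda)=\phi\!\left(\tfrac{i-\lambda}{i+\lambda}\right)$ and $\phi\in\mathcal{F}_2^+(\mathbb{T})$, the functional calculus \eqref{INTeq1} for contractions then yields $\psi(L)=\phi(T)$, $\psi(L_0)=\phi(T_0)$, $\psi(Q_s)=\phi(T_s)$, and consequently
\[
\psi(L)-\psi(L_0)-\at{\tfrac{d}{ds}}{s=0}\psi(Q_s)=\phi(T)-\phi(T_0)-\at{\tfrac{d}{ds}}{s=0}\phi(T_s).
\]

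Applying Theorem~\ref{PSthm} to the pair $(T,T_0)$ for complex polynomials $\phi$ (and extending to $\phi\in\mathcal{F}_2^+(\mathbb{T})$ by the density argument of Theorem~\ref{th5} together with Lemma~\ref{l4}) produces an $L^1(\mathbb{T})$ shift function $\eta$ so that the trace of the above difference equals $\int_{\mathbb{T}}\phi''(z)\,\eta(z)\,dz$. To finish, I would repeat verbatim the final manipulation of Theorem~\ref{selfth}: modify $\eta$ by an appropriate $H^1$-representative (which leaves the pairing with $\phi''$ unchanged), substitute $z=e^{it}=\tfrac{i-\lambda}{i+\lambda}$, and integrate by parts once in $t$ using the resulting boundary-term cancellation. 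The outcome is an integral of the form $\int_{-\infty}^{\infty}\tfrac{d}{d\lambda}\{(1+\lambda^2)\psi'(\lambda)\}\,\xi(\lambda)\,d\lambda$ with $\xi(\lambda)=\tfrac12\,\tilde\eta\!\left(\tfrac{i-\lambda}{i+\lambda}\right)$ for some $\tilde\eta\in L^1([0,2\pi])$, which automatically gives $(1+\lambda^2)^{-1}\xi\in L^1(\mathbb{R})$.

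The principal obstacle is verifying the functional-calculus identification $\psi(Q_s)=\phi(T_s)$ when $L,L_0$ are unbounded and non-self-adjoint: one must check that $Q_s+i$ is boundedly invertible for each $s\in[0,1]$ (which follows from the invertibility of $L_s+i$, since a convex combination of maximal dissipative operators is dissipative and $-i$ lies in its resolvent set), that the series defining $\phi(T_s)$ converges in operator norm (immediate since $\phi\in\mathcal{F}_2^+(\mathbb{T})$ and $T_s$ is a contraction), and that the G\^ateaux derivative in $s$ commutes with the Cayley identification. Once these technicalities are handled, the remaining steps — invoking Theorem~\ref{PSthm}, extending the function class, and the change of variables back to $\mathbb{R}$ — are direct adaptations of the self-adjoint argument.
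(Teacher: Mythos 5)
Your proposal is correct and follows essentially the same route as the paper: Cayley-transform $L,L_0,Q_s$ to the contractions $T,T_0$ and the linear path $T_s$ (the identity $(i-Q_s)(i+Q_s)^{-1}=T_s$ via $(Q_s+i)^{-1}=s(L+i)^{-1}+(1-s)(L_0+i)^{-1}$ is exactly the paper's reduction), invoke Theorem~\ref{PSthm} for the resulting pair of contractions, and transfer the circle formula to $\mathbb{R}$ by the change of variables and integration by parts from Theorem~\ref{selfth}. If anything, you are more explicit than the paper about verifying the Hilbert--Schmidt condition, the invertibility of $Q_s+i$, and the extension of Theorem~\ref{PSthm} from polynomials to $\mathcal{F}_2^+(\mathbb{T})$, which the paper leaves as ``a similar argument.''
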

\begin{proof}
Let $T$ and $T_0$ be the Cayley transform of $L$ and $L_0$ respectively, that is
\begin{align*}
	T=(i-L)(i+L)^{-1} \text{ and } T_0=(i-L_0)(i+L_0)^{-1}.
\end{align*}
Consequently,
\begin{align*}
	L=i(1-T)(1+T)^{-1} \text{ and } L_0=i(1-T_0)(1+T_0)^{-1}.
\end{align*}
It is easy to observe that $(i-Q_s)(i+Q_s)^{-1}=T_s$. Let $\psi\in\mathcal{F}_{\mathbb{R}}$. Then there exists $\phi\in\mathcal{A}_\cir$ such that $\psi(\lambda)=\phi\left(\dfrac{i-\lambda}{i+\lambda}\right)$ and hence $\psi(L)=\phi(T)$, $\psi(L_0)=\phi(T_0)$ and $\psi(Q_s)=\phi(T_s)$. Therefore by similar kind of argument as in Theorem \ref{selfth} and using Theorem~\ref{PSthm} we conclude that
there exists a measurable function $\xi:\mathbb{R}\to\mathbb{C}$ obeying $(1+\lambda^2)^{-1}\xi\in L^1(\mathbb{R})$ such that
\begin{align*}
	\operatorname{Tr}\left\{\psi(L)-\psi(L_0)-\dfrac{d}{ds}\Big|_{s=0}\psi(Q_s)\right\}=\int_{-\infty}^{\infty}~\dfrac{d}{d\lambda}\Big\{(1+\lambda^2)\psi'(\lambda)\Big\}~\xi(\lambda)~d\lambda.
\end{align*} 
This completes the proof. 
\end{proof}

\section{\bf Extension of Koplienko-Neidhardt trace formula}
In this section, we deal with the extension of Koplienko-Neidhardt trace formula in the following sense: Neidhardt obtained the formula \eqref{intequ4} corresponding to the pair $(U_0,A)$, where $U_0$ is a unitary operator on $\mathcal{H}$ and $A=A^*\in \mathcal{B}_2(\mathcal{H})$. In this regard, the following theorem deals with the formula \eqref{intequ4} corresponding to the pair $(T_0,A)$, where $T_0=N_0+V$ is a contraction on $\mathcal{H}$ such that $N_0$ is a bounded normal operator on $\mathcal{H}$, $V\in \mathcal{B}_2(\mathcal{H})$ and $A=A^*\in \mathcal{B}_2(\mathcal{H})$. We use finite dimensional approximation method as earlier to prove our result.  In this regard, we would also like to mention that the Krein and Koplienko-Neidhardt trace formulas for certain pair of contractions in the multiplicative path was considered in \cite[Lemma 2.1]{MNP19} and \cite{MaMo} respectively.
\begin{thm}\label{main}
Let $T_0=N_0+V$ be a contraction in an infinite dimensional separable Hilbert space $\hil$ such that $N_0^*N_0=N_0N_0^*$ and $V\in\hils$. Let $A=A^*\in\hils$. Denote $T_s=e^{isA}T_0, ~s\in[0,1],$ and $T=T_1$. Then for any complex polynomial  $p(\cdot)$,
$\Bigg\{p(T)-p(T_0)-\at{\dds}{s=0} \big\{p(T_s)\big\} \Bigg\}\in\boh $
and there exists an $L^1(\mathbb{T})$-function $\tilde{\eta}$  (unique up to an analytic term) such that
\begin{align}\label{eq201}
	\operatorname{Tr}\Bigg\{p(T)-p(T_0)-\at{\dds}{s=0} \big\{p(T_s)\big\} \Bigg\}=\int_{0}^{2\pi}
	\dfrac{d^2}{dt^2}\Big(p(e^{it})\Big)\tilde{\eta}(t)dt.
\end{align}
Moreover, for every given $\epsilon >0$, we choose the function $\tilde{\eta}$ satisfying \eqref{eq201} in such a way so that 
\begin{equation}\label{eq35}
	\|\tilde{\eta}\|_{L^1(\mathbb{T})}
	\leq (\frac{1}{2}+\epsilon) ~\|A\|_2^2.
\end{equation}
\end{thm}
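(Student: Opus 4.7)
The plan is to mirror the strategy used in Theorem~\ref{th4}, replacing the linear-path ingredients with their multiplicative-path analogues. As a first reduction, by linearity it suffices to treat monomials $p(z)=z^r$, $r\in\mathbb{Z}$, the cases $r=0$ and $r=\pm 1$ being trivial. Choose the sequence $\{P_n\}$ of finite-rank projections supplied by Lemma~\ref{appthm} (applied to the commuting pair $(N_0+N_0^*,\,N_0-N_0^*)$ together with $V,V^*,A$), and set $A_n:=P_nAP_n$, $T_{0,n}:=P_nT_0P_n$, $T_{s,n}:=e^{isA_n}T_{0,n}$, $T_n:=T_{1,n}$. By Theorem~\ref{th3(2)} the trace-class difference
\[
X_n:=\bigl\{p(T)-p(T_0)-\tfrac{d}{ds}\bigr|_{s=0}p(T_s)\bigr\}-P_n\bigl\{p(T_n)-p(T_{0,n})-\tfrac{d}{ds}\bigr|_{s=0}p(T_{s,n})\bigr\}P_n
\]
converges to $0$ in $\|\cdot\|_1$. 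Theorem~\ref{th2(2)} applied inside $\operatorname{Ran}P_n$ gives the finite-dimensional Koplienko--Neidhardt formula, producing $\tilde{\eta}_n\in L^1(\mathbb{T})$ with
\[
\tilde{\eta}_n(t)=\int_0^1 \operatorname{Tr}\bigl[A_n\{\mathcal{F}_{0,n}(t)-\mathcal{F}_{s,n}(t)\}\bigr]\,ds,\qquad
\|[\tilde{\eta}_n]\|_{L^1(\mathbb{T})/H^1(\mathbb{T})}\le \tfrac{1}{2}\|A_n\|_2^2\le\tfrac{1}{2}\|A\|_2^2.
\]

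Next I would show that $\{[\tilde{\eta}_n]\}$ is Cauchy in $L^1(\mathbb{T})/H^1(\mathbb{T})$. Given $f\in\mathcal{P}(\mathbb{T})$, set $g(e^{it})=\int_0^t f(e^{is})ie^{is}\,ds$; integration by parts in $t$ (as in \eqref{eq31}), combined with Fubini, converts $\int_\mathbb{T} f(z)\{\tilde{\eta}_n-\tilde{\eta}_m\}(z)\,dz$ into
\[
\int_0^1 \operatorname{Tr}\Bigl[A_n\{g(T_{s,n})-g(T_{0,n})\}-A_m\{g(T_{s,m})-g(T_{0,m})\}\Bigr]\,ds,
\]
which after inserting and subtracting $g(T_s)$ and $g(T_0)$ splits into the three differences
\[
A_n\{g(T_{s,n})-g(T_s)-g(T_{0,n})+g(T_0)\},\ \
A_m\{g(T_{s,m})-g(T_s)-g(T_{0,m})+g(T_0)\},\ \
(A_n-A_m)\{g(T_s)-g(T_0)\}.
\]
To each I would apply Theorem~\ref{estimatethm} (the $\|g(U)-g(U_0)\|_2\le\|g'\|_\infty\|U-U_0\|_2$ bound) together with the cyclicity of the trace and the identity $P_n g(T_{s,n})P_n = P_n g(T_{s,n})$; the critical quantity is then $\|T_sP_n-P_nT_{s,n}\|_2$, which decomposes as
\[
e^{isA}(T_0P_n-P_nT_{0,n})+(e^{isA}P_n-P_ne^{isA_n})T_{0,n}=e^{isA}P_n^\perp T_0P_n+(e^{isA}P_n-P_ne^{isA_n})T_{0,n}.
\]
The first summand is controlled by Lemma~\ref{appthm}(i)--(iii) via $\|P_n^\perp T_0P_n\|_2\le\|P_n^\perp N_0P_n\|_2+\|P_n^\perp V\|_2$, and the second by Lemma~\ref{appthm}(vii) through an expansion of the exponential (analogous to the estimate for $(viii)$). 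Combining gives a bound of the form $\bigl|\int_\mathbb{T} f(z)\{\tilde{\eta}_n-\tilde{\eta}_m\}\,dz\bigr|\le K_{m,n}\|f\|_\infty$ with $K_{m,n}\to 0$, whence Lemma~\ref{supnorm} delivers the Cauchy property.

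Consequently there exists $[\tilde{\eta}]\in L^1(\mathbb{T})/H^1(\mathbb{T})$ with $[\tilde{\eta}_n]\to[\tilde{\eta}]$ in norm, and $\|[\tilde{\eta}]\|\le\tfrac{1}{2}\|A\|_2^2$. Passing to the limit in the finite-dimensional formula (using $\operatorname{Tr}\{X_n\}\to 0$ and $\int_\mathbb{T}\tfrac{d^2}{dt^2}\{p(e^{it})\}\tilde{\eta}_n(t)\,dt\to \int_\mathbb{T}\tfrac{d^2}{dt^2}\{p(e^{it})\}\tilde{\eta}(t)\,dt$, which is legitimate since $\tfrac{d^2}{dt^2}p(e^{it})$ corresponds to a polynomial in $z=e^{it}$ and therefore annihilates any $H^1$-representative) yields \eqref{eq201}. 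For the norm bound \eqref{eq35}, given $\epsilon>0$, choose a representative $\tilde{\eta}\in[\tilde{\eta}]$ with $\|\tilde{\eta}\|_{L^1}\le\|[\tilde{\eta}]\|_{L^1/H^1}+\epsilon\|A\|_2^2\le(\tfrac{1}{2}+\epsilon)\|A\|_2^2$; uniqueness up to an analytic term is automatic because any element of $H^1(\mathbb{T})$ contributes $0$ when paired against $\tfrac{d^2}{dt^2}p(e^{it})$ in the trace identity.

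\textbf{Main obstacle.} The principal technical difficulty lies in establishing the Cauchy estimate: the multiplicative structure $T_s=e^{isA}T_0$ couples the perturbation $A$ with $T_0$ inside the exponential, so one has to control $e^{isA}P_n-P_ne^{isA_n}$ in $\|\cdot\|_2$ uniformly in $s\in[0,1]$. This is where the full strength of Lemma~\ref{appthm}, particularly the trace-class bounds on $e^{iA}-iA-e^{iA_n}+iA_n$ and the Hilbert--Schmidt bounds on the commutator-type quantities, becomes essential; once these estimates are in place, the remainder of the proof is a direct transcription of the argument in Theorem~\ref{th4}.
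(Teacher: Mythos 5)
Your proposal is correct and follows essentially the same route as the paper's proof: truncation via Lemma~\ref{appthm}, the finite-dimensional formula of Theorem~\ref{th2(2)} combined with the trace-norm approximation of Theorem~\ref{th3(2)}, the integration-by-parts/Fubini argument with Theorem~\ref{estimatethm} and Lemma~\ref{supnorm} to show $\{[\tilde{\eta}_n]\}$ is Cauchy in $L^1(\mathbb{T})/H^1(\mathbb{T})$, and then passage to the limit together with the definition of the quotient norm to obtain \eqref{eq201} and \eqref{eq35}. The only slip is the claim that $r=\pm1$ is trivial: unlike the linear path, for the multiplicative path one has $p(T)-p(T_0)-\at{\dds}{s=0}p(T_s)=(e^{iA}-I-iA)T_0$ when $p(z)=z$, which is not identically zero; this case is nevertheless covered by the general monomial computation you invoke (only $r=0$ is trivial), so the argument is unaffected.
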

\begin{proof} 
Using Theorems \ref{th3(2)} and \ref{th2(2)}, we have that
\begin{align}\label{limequal}
	\nonumber \operatorname{Tr}\Big\{p(T)-p(T_0)-\at{\dds}{s=0}p(T_s) \Big\}=&\lim_{n\to \infty}\operatorname{Tr}~\left[P_n\Big\{p(T_n)-p(T_{0,n})-\at{\dds}{s=0}p(T_{s,n}) \Big\}P_n\right]\\
	=&\lim\limits_{n\to \infty}\int_{0}^{2\pi}\dfrac{d^2}{dt^2}\Big(p(e^{it})\Big)\tilde{\eta}_n(t)dt,
\end{align}
where $A_n=P_nAP_n$, $T_{0,n} =P_nT_0P_n$,  $T_n=e^{iA_n}T_{0,n}$,  $T_{s,n}=e^{isA_n}T_{0,n}$, and 
\begin{equation}\label{aeqab}
	\tilde{\eta}_n(t)=\int_{0}^{1}\operatorname{Tr} \Big[A_n\Big\{\mathcal{F}_{0,n}(t)-\mathcal{F}_{s,n}(t)\Big\}\Big]ds, ~t\in[0,2\pi],
\end{equation}
where $\mathcal{F}_{0,n}(\cdot),~\mathcal{F}_{s,n}(\cdot)$ are corresponding semi-spectral measures of the contractions $T_{0,n}$ and $T_{s,n}$ respectively.
Moreover, from \eqref{etabdd} it follows that 
\begin{equation}\label{eq36Final}
	\big\|[\tilde{\eta}_n]\big\|_{L^1(\mathbb{T})/H^1(\mathbb{T})}
	\leq \frac{1}{2} ~\|A_n\|_2^2.
\end{equation}
Next we show that the sequence $\{\tilde{\eta}_n\}$ converges in some suitable sense. Indeed, using the similar setup as in the proof of the Theorem \ref{th4}, we have for $f\in \mathcal{P}(\mathbb{T})$ and $g(e^{it})=\int_{0}^{t}f(e^{is})ie^{is}ds$, $t\in[0,2\pi]$ that 
\begin{align}\label{eqq31}
	\nonumber &\int_{\cir} f(z)\big\{\tilde{\xi}_n(z)-\tilde{\xi}_m(z)\big\} dz
	=\int_{0}^{2\pi} f(e^{it})\big\{\tilde{\eta}_n(e^{it})-\tilde{\eta}_m(e^{it})\big\}~ie^{it}dt\\
	&=\int_{0}^{2\pi}\ddt \big\{g(e^{it})\big\}\Bigg[\int_{0}^{1}\operatorname{Tr} \Big[A_n\Big\{\mathcal{F}_{0,n}(t)-\mathcal{F}_{s,n}(t)\Big\}-A_m\Big\{\mathcal{F}_{0,m}(t)-\mathcal{F}_{s,m}(t)\Big\}\Big]~ds\Bigg]dt,
\end{align}
where we set $\tilde{\xi}_n(z)=\tilde{\eta}_n(Arg(z)),~z\in\cir$. 
Next by using Fubini's theorem to interchange the orders of integration and integrating by-parts, the above expression \eqref{eqq31} becomes
\begin{align*}\label{eqqq31}
	\nonumber &\int_{\cir} f(z)\big\{\tilde{\xi}_n(z)-\tilde{\xi}_m(z)\big\} dz\\
	\nonumber &=\int_{0}^{1}ds~\operatorname{Tr} \Bigg[A_n\Bigg\{\Big\{g(T_{s,n})-g(T_{s})\Big\}-\Big\{g(T_{0,n})-g(T_{0})\Big\}\Bigg\}\\
	&\hspace*{.3in}-A_m\Bigg\{\Big\{g(T_{s,m})-g(T_{s})\Big\}-\Big\{g(T_{0,m})-g(T_{0})\Big\}\Bigg\}+(A_n-A_m)\Big\{g(T_{s})-g(T_{0})\Big\}\Bigg],
\end{align*}
which by using Theorem \ref{estimatethm} yields
\begin{equation*}\label{ap}
	\begin{split}
		&\left|\int_{\cir} f(z)\big\{\tilde{\xi}_n(z)-\tilde{\xi}_m(z)\big\} dz\right|\leq~~\int_{0}^{1}\Big[\|A\|_2\|f\|_\infty\Big[\left\|T_{s,n}P_n-P_nT_s\right\|_2 +\left\|T_{0,n}P_n-P_nT_0\right\|_2\\
		&+\left\|T_{s,m}P_m-P_mT_s\right\|_2+\left\|T_{0,m}P_m-P_mT_0\right\|_2 \Big]+\|f\|_\infty\|(A_n-A_m)\|_2\left\|T_{s}-T_{0}\right\|_2\Big]~ds,
	\end{split}
\end{equation*}
and hence by applying Lemma \ref{supnorm} we have
\begin{align}\label{eqe}
	\nonumber&\big\|[\tilde{\xi}_n]-[\tilde{\xi}_m]\big\|_{L^1(\mathbb{T})/H^1(\mathbb{T})}\\
	\nonumber=&\big\|[\tilde{\xi}_n-\tilde{\xi}_m]\big\|_{L^1(\mathbb{T})/H^1(\mathbb{T})}=~\sup_{\substack{f\in \mathcal{P}(\cir);~\|f\|_{\infty}\leq 1}}\Bigg|\int_{\cir} f(z)\big\{\tilde{\xi}_n(z)-\tilde{\xi}_m(z)\big\}dz\Bigg|\\
	\nonumber\leq&~~\int_{0}^{1}\Big[\|A\|_2\left[\left\|T_{s,n}P_n-P_nT_s\right\|_2 +\left\|P_nT_0P_n^\perp\right\|_2+\left\|T_{s,m}P_m-P_mT_s\right\|_2+\left\|P_mT_0P_m^\perp\right\|_2 \right]\\
	&\hspace*{2in}+\|(A_n-A_m)\|_2\left\|T_{s}-T_{0}\right\|_2\Big]~ds.
\end{align}
Finally, using the estimates listed in Lemma~\ref{appthm} we conclude that the right hand side of \eqref{eqe} converges to zero as $m,n\longrightarrow \infty$. Therefore $\Big\{[\tilde{\xi}_n]\Big\}$ is a Cauchy sequence in $L^1(\mathbb{T})/H^1(\mathbb{T})$ and hence there exists a $\tilde{\xi} \in L^1(\mathbb{T})$ such that $\Big\{[\tilde{\xi}_n]\Big\}$ converges to $[\tilde{\xi}]$ in $L^1(\mathbb{T})/H^1(\mathbb{T})$-norm which by using \eqref{limequal} yields
\begin{align*}
	\operatorname{Tr}\Big\{p(T)-p(T_0)-\at{\dds}{s=0}p(T_s) \Big\}= \int_{0}^{2\pi}\dfrac{d^2}{dt^2} \left\{p(e^{it})\right\}\tilde{\xi}(e^{it})dt=\int_{0}^{2\pi}\dfrac{d^2}{dt^2} \left\{p(e^{it})\right\}\tilde{\eta}(t)dt,
\end{align*}
where $\tilde{\eta}(t)=\tilde{\xi}(e^{it})~t\in[0,2\pi]$.
Moreover, using the estimate \eqref{eq36Final} and applying the definition of the $L^1(\mathbb{T})/H^1(\mathbb{T})$- norm we conclude 
the estimate \eqref{eq35}. This completes the proof. 
\end{proof}

\begin{rem}
By repeating the similar argument the above Theorem \ref{main} can also be extended to the class $\mathcal{F}_2^+(\cir)$.
\end{rem}

\section*{\bf Acknowledgement}
  The authors are extremely grateful to Prof. Fritz Gesztesy for his valuable comments, which help to write the introduction part more visibly. A. Chattopadhyay is supported by the Core Research Grant (CRG), File No: CRG/2023/004826, by the Science and Engineering Research Board (SERB), Department of Science \& Technology (DST), Government of India. S. Das acknowledges financial support from the Indian Statistical Institute Bangalore, India. C. Pradhan acknowledges support from JCB/2021/000041 as well as IoE post-doctoral fellowship from the Indian Institute of Science Bangalore, India. We sincerely thank the referees for several important suggestions which considerably improved the presentation of this paper.

\section*{\bf Declarations} 	

{\textbf{Conflicts of interests:}}	The authors declare that they have no conflict of interest. No datasets were generated or analyzed during the current study.


\begin{thebibliography}{10}
	\bibitem{AdNei}
V. M. Adamjan, and H. Neidhardt, {\it On the summability of the spectral shift function for pair of contractions and dissipative operators}, {J. Operator Theory} {\bf 24} (1990), no. 1, 187--205.

\bibitem{Al81}
A. B. Aleksandrov, {\it {$A$}-integrability of boundary values of harmonic functions}, {Mat. Zametki} {\bf 30} (1981), no. 1, 59--72, 154.


\bibitem{AP2011}
A. B. Aleksandrov, and V. V. Peller, {\it Trace formulae for perturbations of class {$S_m$}}, { J. Spectr. Theory} {\bf 1} (2011), no. 1, 1--26.


\bibitem{AP16}
A. B. Aleksandrov, and V. V. Peller,  {\it Kre\u{\i}n's trace formula for unitary operators and operator {L}ipschitz functions (Russian)}, {Funktsional. Anal. i Prilozhen.} {\bf 50} (2016), no. 3, 1–11; translation in {Funct. Anal. Appl.} {\bf 50} (2016), no. 3, 167--175.

\bibitem{ALPP}
A. B. Aleksandrov, and V. V. Peller,  {\it Operator {L}ipschitz functions (Russian)}, { Uspekhi Mat. Nauk} 71 (2016), no. 4(430), 3--106; translation in {Russian Math. Surveys} {\bf 71} (2016), no. 4, 605--702.




\bibitem{RB}
R. Bhatia,  {\it Matrix Analysis} Springer, New York, 1997.

\bibitem{BS1}
M. S. Birman, and A. B. Pushnitski,  {\it Spectral shift function, amazing and multifaceted. Dedicated to the memory of Mark Grigorievich Krein (1907--1989)}, {Int. Equ. Oper. Theo.} {\bf 30} (1998), no. 2, 191--199.

\bibitem{BS2} 
M. S. Birman, and  M.Z. Solomyak,  {\it Remarks on the spectral shift function. (Russian) Boundary value problems of mathematical physics and related questions in the theory of functions, 6}, {Zap. Naucn. Sem. Leningrad. Otdel. Mat. Inst. Steklov. (LOMI)} {\bf 27} (1972), 33--46. Boundary value problems of mathematical physics and related questions in the theory of functions 6. In Russian. English transl., J. Sov. Math. {\bf 3} (1975), 408--419.


\bibitem{BO}
K. N. Boyadzhiev,{\it Mean value theorems for traces}, {Math. Japonica.} {\bf 38} (1993), no. 2, 217--224. 

\bibitem{CC96}
A.~H. Chamseddine and A.~Conne, {\it Universal formula for noncommutative geometry actions: {U}nifications of gravity and the standard model}, { {Phys. Rev. Lett.}} {\bf 77} (1996),  4868--4871.

\bibitem{CC97}
A.~H. Chamseddine and A.~Connes, {\it The spectral action principle}, { Commun. Math. Phys.} {\bf 186} (1997),  731--750.

\bibitem{CCM07}
A.~H. Chamseddine, A.~Connes, and M.~Marcolli, {\it Gravity and the standard model with neutrino mixing},  { Adv. Theor. Math. Phys.} {\bf 11} (2007), 991--1089.

\bibitem{CC06}
A.~H. Chamseddine, and A.~Connes, {\it Inner fluctuations of the spectral action}, { J. Geom. Phys.} {\bf 57} (2006),  1--21.


\bibitem{ChSi}
A. Chattopadhyay,  and K. B. Sinha, {\it Koplienko trace formula}, {Int. Equ. Oper. Theo.} {\bf 73} (2012), no. 4, 573--587.

\bibitem{ChSiCont}
A. Chattopadhyay,  and K. B. Sinha, {\it Trace formula for contractions and it's representation in $\mathbb{D}$}, J. Operator Theory {\bf 88} (2022), no. 2, 275--288.

\bibitem{KBSAC}
A. Chattopadhyay, and K. B. Sinha, {\it Trace formula for two Variables}, \url{https://arxiv.org/abs/1402.0792} (2014).

\bibitem{CDP}
A. Chattopadhyay, S. Das, and C. Pradhan, {\it Koplienko-Neidhardt trace formula for unitaries - a new proof}, {J. Math. Anal. Appl.} {\bf 505} (2022), no. 1,  Paper No. 125467, 21 pp.

\bibitem{CGP2023}
A. Chattopadhyay, S. Giri, and C. Pradhan, {\it Krein and Koplienko trace formulas on normed ideals in several variables}, \url{https://arxiv.org/abs/2303.13298} (2023).

\bibitem{Connes}
A.~Connes,	{\it Noncommutative Geometry}, Academic Press, San Diego, 1994.

\bibitem{DS}
K. Dykema, and A. Skripka, {\it Higher order spectral shift}, {J. Funct. Anal.} {\bf 257} (2009), no. 4, 1092--1132.

\bibitem{DySk14CMP}
K. Dykema, and A. Skripka, {\it Perturbation formulas for traces on normed ideals}, {Comm. Math. Phys.} {\bf 325} (2014), no. 3, 1107--1138.


\bibitem{hbspace} 
E. Fricain, and J. Mashreghi, {The theory of H(b) spaces. Vol. 1},  New Mathematical Monographs, 20, Cambridge University Press, Cambridge, 2016. 

\bibitem{GePu}
F. Gesztesy, A. Pushnitski, and B. Simon, {\it On the {K}oplienko spectral shift function. {I}. {B}asics}, {Zh. Mat. Fiz. Anal. Geom.} {\bf 4} (2008), no. 1, 63--107, 202.


\bibitem{KISHUL}
E. Kissin, and V .S. Shulman, {\it On fully operator Lipschitz functions}, J. Funct. Anal. {\bf 253} (2007), no. 2, 711--728.

\bibitem{Ko}
L. S. Koplienko, {\it Trace formula for perturbations of nonnuclear type (Russian),}  { Sib. Mat. Zh.} 25 (1984), 62--71. English transl. in {Siberian Math. J.} {\bf 25} (1984), 735--743.


\bibitem{Kr53}
M. G. Kre\u{\i}n, {\it On the trace formula in perturbation theory. (Russian)} { Mat. Sbornik N.S.} {\bf 33}(75) (1953), 597--626.

\bibitem{Kr62}
M. G. Kre\u{\i}n, {\it On perturbation determinants and a trace formula for unitary and self-adjoint operators (Russian)}, Dokl. Akad. Nauk SSSR {\bf 144} (1962), 268--271.

\bibitem{Kr83}
M. G. Kre\u{\i}n,  {\it On certain new studies in the perturbation theory for self-adjoint operators}, In: Gohberg, I. (ed.) Topics in Differential and Integral equations and Operator theory, OT 7, 107--172, 1983.

\bibitem{K1987}
M. G. Kre\u{\i}n, {\it Perturbation determinants and a trace formula for some classes of pairs of operators}, { J. Operator Theory} {\bf 17} (1987), no. 1, 129--187.


\bibitem{Lif} 
I. M.  Lif\v{s}ic, {\it On a problem of the theory of perturbations connected with quantum statistics (Russian),} {Uspehi Matem. Nauk (N.S.)} {\bf 7} (1952), no. 1(47), 171--180.

\bibitem{Lan1965}
H. Langer, {\it Eine {E}rweiterung der {S}purformel der {S}t\"{o}rungstheorie}, { Math. Nachr.} {\bf 30} (1965), 123--135.

\bibitem{MM2003}
M. M. Malamud, and S. M. Malamud, {\it Spectral theory of operator measures in a {H}ilbert space}, Algebra i Analiz {\bf 15} (2003), no. 3, 1–77;  St. Petersburg Math. J. {\bf 15} (2004), no. 3, 323--373.

\bibitem{MN2015}
M. M. Malamud, and H. Neidhardt, {\it Trace formulas for additive and non-additive perturbations}, { Adv. Math.} {\bf 274} (2015), 736--832.

\bibitem{MNP2017} 
M. M. Malamud, H. Neidhardt, and V. V. Peller, {\it Analytic operator Lipschitz functions in the disk and a trace formula for functions of contractions (Russian)}, { Funktsional. Anal. i Prilozhen.} {\bf 51} (2017), no. 3, 33--55; English Transl.: { Funct. Anal. and its Appl.} {\bf 51} (2017), no. 3, 185--203.

\bibitem{MNP19} 
M. M. Malamud, H. Neidhardt, and V. V. Peller, {\it Absolute continuity of spectral shift}, { J. Funct. Anal.} {\bf 276} (2019), no. 5, 1575--1621. 


\bibitem{MaMo}
S. A. M. Marcantognini, and M. D. Mor$\acute{a}$n, {\it Koplienko-Neidhardt trace formula for pairs of contraction operators and pairs of maximal dissipative operators}, {Math. Nachr.} {\bf 279} (2006), no. 7, 784--797.

\bibitem{MoSi94} 
A. N. Mohapatra, and K. B. Sinha, {\it Spectral shift function and trace formula. Spectral and inverse spectral theory}, { Proc. Indian Acad. Sci. Math. Sci.} {\bf 104} (1994), no. 4, 819--853.

\bibitem{MoSi96}
A. N. Mohapatra, and K. B. Sinha, {\it Spectral shift function and trace formula for unitaries---a new proof}, {Integral Equations Operator Theory} {\bf 24} (1996), no. 3, 285--297.

\bibitem{Nai}
M. A. Naimark, {\it Spectral functions of symmetric operator (Russian. English summary),} {Bull. Acad. Sci. URSS. S\'{e}r. Math. [Izvestia Akad. Nauk SSSR]} {\bf 4} (1940), 277--318.

\bibitem{NH}
H. Neidhardt, {\it Spectral shift function and Hilbert-Schmidt perturbation: extensions of some work of L. S. Koplienko}, {Math. Nachr.} {\bf 138} (1988), 7--25.






\bibitem{P1987}
V. V. Peller, {\it For which {$f$} does {$A-B\in S_p$} imply that {$f(A)-f(B)\in S_p$}?}. Operators in indefinite metric spaces, scattering theory and other topics (Bucharest, 1985), 289--294, Oper. Theory Adv. Appl., 24, Birkh\"{a}user, Basel, 1987.


\bibitem{Pe05}
V. V. Peller,  {\it An extension of the Koplienko-Neidhardt trace formulae}, {J. Funct. Anal.} {\bf 221} (2005), no. 2, 456--481.

\bibitem{Peller09}
V. V. Peller,  {\it Differentiability of functions of contractions. In: Linear and complex analysis}, Amer. Math. Soc. Transl. Ser. 2. vol. 226, Providence, RI: Amer. Math. Soc., 109--131. 2009.

\bibitem{Pe16}
V. V. Peller, {\it The Lifshitz-Krein trace formula and operator Lipschitz functions}, {Proc. Amer. Math. Soc.} {\bf 144} (2016), no. 12, 5207--5215.


\bibitem{PoSkSu13In}
D. Potapov, A. Skripka, and F. Sukochev, {\it Spectral shift function of higher order (English summary)}, {Invent. Math.} {\bf 193} (2013), no. 3, 501--538.

\bibitem{PoSuZa14JST}
D. Potapov, F. Sukochev, and D. Zanin, {\it Krein's trace theorem revisited}, {J. Spectr. Theory} {\bf 4} (2014), no. 2, 415--430.


\bibitem{PoSkSu}
D. Potapov, A. Skripka, and F. Sukochev, {\it Higher-order spectral shift for contractions}, {Proc. Lond. Math. Soc. (3)} {\bf 108} (2014), no. 2, 327--349.

\bibitem{PoSkSu15}
D. Potapov, A. Skripka, and F. Sukochev, {\it Trace formulas for resolvent comparable operators}, {Adv. Math.} {\bf 272} (2015), 630--651.


\bibitem{PoSkSu16}
D. Potapov, A. Skripka, and F. Sukochev, {\it Functions of unitary operators: derivatives and trace formulas}, {J. Funct. Anal.} {\bf 270} (2016), no. 6, 2048--2072.

\bibitem{PoSu}
D. Potapov, and F. Sukochev, {\it Koplienko spectral shift function on the unit circle}, {Comm. Math. Phys.} {\bf 309} (2012), no. 3, 693--702.

\bibitem{RN}
F. Riesz, and B. Sz.-Nagy, {\it [Func. Anal.] Functional Analysis}, translation of Le\c{c}ons d'Analyse Fonctionelle, 2nd ed, (Budapest, 1953), Dover, New York, 1990.

\bibitem{Ry1}
A. V. Rybkin,  {\it A trace formula for a contractive and a unitary operator}, {Funkt. Anal. i Priloz.} {\bf 21} (1987), no. 4, 85--87.

\bibitem{Ry2}
A. V. Rybkin,  {\it The discrete and the singular spectrum in the trace formula for a contractive and a unitary operator}, {Funkts. Anal. i Priloz.} {\bf 23} (1989), no. 3, 84--85.

\bibitem{Ry3} 
A. V. Rybkin, {T\it he spectral shift function, the characteristic function of a contraction and a generalized integral},  {Mat. Sb.} {\bf 185} (1994), no. 10, 91--144. 

\bibitem{Ry4} 
A. V. Rybkin,  {\it On A-integrability of the spectral shift function of unitary operators arising in the Lax-Phillips scattering theory}, {Duke Math. J.} {\bf 83}  (1996), no. 3, 683--699. 


\bibitem{SK10}
A. Skripka,  {\it Higher order spectral shift, II. Unbounded case}, {Indiana Univ. Math. J.} {\bf 59} (2010), no. 2, 691--706.

\bibitem{Sk}
A. Skripka,  {\it Estimates and trace formulas for unitary and resolvent comparable perturbations}, {Adv. Math.} {\bf 311} (2017), 481--509.

\bibitem{ST}
A. Skripka, and A. Tomskova, {Multilinear operator integrals. Theory and applications}. Lecture Notes in Mathematics, 2250. Springer, Cham. 2019.


\bibitem{NGFO}
B. Sz.-Nagy,  C. Foia\c{s}, H. Bercovici, and L. K$\acute{e}$rchy, {\it Harmonic analysis of operators on Hilbert space}, Second edition, Revised and enlarged edition, Universitext, Springer, New York, xiv+474 pp, 2010.

\bibitem{Voi}
D. Voiculescu, {\it On a trace formula of {M}. {G}. {K}re\u{\i}n}, Operator Theory: Advances and Applications, vol. 24, pp. 329--332, Birkhauser, Basel, 1987.

\bibitem{Voiapp}
D. Voiculescu, {\it Some results on norm-ideal perturbations of Hilbert space operators}, J. Operator Theory {\bf 2} (1979), no. 1, 3--37.

\end{thebibliography}
\end{document}